\numberwithin{equation}{section}
\numberwithin{figure}{section}
\theoremstyle{plain}
\newtheorem{thm}{\protect\theoremname}[section]
\theoremstyle{remark}
\newtheorem{rem}[thm]{\protect\remarkname}
\theoremstyle{plain}
\newtheorem{lem}[thm]{\protect\lemmaname}
\theoremstyle{definition}
\newcommand{\Rmnum}[1]{\expandafter\@slowromancap\romannumeral#1@}\makeatother
\numberwithin{equation}{section}
\newcommand{\abs}[1]{\left\vert#1\right\vert}
\newcommand{\set}[1]{\left\{#1\right\}}
\newcommand{\defs}{:=}
\newcommand{\sTo}{\rightarrow}
\newcommand{\dif}{\mathrm{d}}
\DeclareSymbolFont{lettersA}{U}{pxmia}{m}{it}
\DeclareMathSymbol{\piup}{\mathord}{lettersA}{"19}
\newcommand{\mr}[1]{\mathrm{#1}}
\providecommand{\lemmaname}{Lemma}
\providecommand{\problemname}{Problem}
\providecommand{\remarkname}{Remark}
\providecommand{\theoremname}{Theorem}
\begin{document}
\title[Uniqueness of 1-D Shock Solutions via Asymptotic Analysis]{On Uniqueness of Steady 1-D Shock Solutions in a Finite Nozzle
	via Asymptotic Analysis for Physical Parameters}
\author{Beixiang Fang}
\author{Su Jiang}
\author{Piye Sun}

\address{B.X. Fang: School of Mathematical Sciences, MOE-LSC, and SHL-MAC,
	Shanghai Jiao Tong University, Shanghai 200240, China }
\email{\texttt{bxfang@sjtu.edu.cn}}

\address{S. Jiang: School of Mathematical Sciences,
	Shanghai Jiao Tong University, Shanghai 200240, China}
\email{\texttt{js17854295180@sjtu.edu.cn}}

\address{P. Sun: School of Mathematical Sciences,
	Shanghai Jiao Tong University, Shanghai 200240, China}
\email{\texttt{sunpiye@amss.ac.cn}}


\keywords{Uniqueness; Steady; Euler system; 1-D shocks; Barotropic; Polytropic gases; Flat Nozzles; Heat conductivity; Temperature-depending viscosity}
\subjclass[2010]{35A02, 35L65, 35L67, 35Q31, 76L05, 76N10, 76N17}

\date{\today}
\newpage
\begin{abstract}

	In this paper, we study the uniqueness of the steady 1-D shock solutions for the inviscid compressible Euler system in a finite nozzle via asymptotic analysis for physical parameters. The parameters for the heat conductivity and the temperature-depending viscosity are investigated for both barotropic gases and polytropic gases. It finally turns out that the hypotheses on the physical effects have significant influences on the asymptotic behaviors as the parameters vanish. In particular, the positions of the shock front for the limit shock solution( if exists ) are different for different hypotheses. Hence, it seems impossible to figure out a criterion selecting the unique shock solution within the framework of the inviscid Euler flows.

\end{abstract}

\maketitle
\tableofcontents{}

%

%

\section{Introduction}

In this paper we are going to study the uniqueness of 1-D steady transonic shocks for inviscid compressible flows in a finite nozzle via the asymptotic analysis for physical parameters, such as heat conductivity, viscosity, etc.
It is well-known that, for inviscid compressible flows in a finite nozzle governed by the steady 1-D Euler system, there exist infinite shock solutions with the position of the shock front being arbitrary, while the states both ahead of and behind it for these solutions being the same (see Figure \ref{fig:NormalShocks}).
All these steady transonic shock solutions satisfy the Rankine-Hugoniot conditions and the entropy condition.
However, uniqueness is expected for physical problems and one may wonder whether it is possible to figure out a physical criterion to select the unique physical shock solution among them.
In reality, physical effects ignored in the Euler system, such as heat conduction, viscous effects, etc., need to be taken into account and the motion of the fluid will be described by more complicated systems, for instance, Navier-Stokes equations.
Thus, the physical shock solution for the Euler system is naturally to be understood as the limit, if exists, of the solutions to the system with the physical effects as the associating parameters vanish.
See, for instance, \cite{BianchiniBressan2005AM,Gilbarg1951,GoodmanXin1992ARMA,HoffLiu1989IUMJ,HuangWangWangYang2015SCM,Wang2008AcMS,Yu1999ARMA} and references therein for literature on viscous shock profiles for Cauchy problems.
Then, based on this understanding, it is natural to ask whether it is true or not that such a limit does exist and, among all the transonic shock solutions for the steady 1-D Euler system, only one could be the limit.
Recently, in \cite{FangZhao2021CPAA}, Fang-Zhao shows that this is true for viscous flows in case that the viscosity is assumed to be constant. Namely, the viscous shock solutions converge as the viscosity vanishes, which yields that, among all previously mentioned shock solutions for the 1-D steady Euler system, only one can be the limit of steady viscous shock solutions as the viscosity goes to zero.
Then further questions naturally arise: Is this also true for other physical effects, namely, does there exist only one shock solution being the limit as the associating physical parameters vanish?
Moreover, if this is true, is the position of the shock front for the limit shock solution the same as the one obtained in \cite{FangZhao2021CPAA}?
Concerned with these questions, this paper will investigate the following two cases. 
\begin{enumerate}
	\item 
	The fluid with heat conduction. The coefficient of the heat conductivity will be assumed to be constant. This paper is trying to establish the existence of heat conductive transonic shock solutions with given conditions at the entrance and the exit of the nozzle, then to analyze the asymptotic behavior of the shock solutions as the coefficient of the heat conductivity vanishes. Two different types of the fluid will be discussed: barotropic gases and polytropic gases. It is interesting to observe in this paper that the heat conductive shock solutions for barotropic gases do not converge to any shock solution as the heat conductivity goes to zero, while the ones for polytropic gases converge to a shock solution. Morover, similar as in \cite{FangZhao2021CPAA}, for polytropic gases, the position of the shock front for the limit shock solution is uniquely determined, but it is different from the one determined in \cite{FangZhao2021CPAA}.
	\item The viscous fluid with the temperature-depending viscosity( i.e., the viscosity is assumed to depend on the temperature).
	This paper will establish the existence of the viscous transonic shock solutions with given conditions at the entrance and the exit of the nozzle, then analyze the asymptotic behavior of the shock solutions as the coefficient of the viscosity vanishes. Also, both barotropic gases and polytropic gases will be discussed. It will be shown that, for both types of fluids, the viscous shock solutions converge to a shock solution for the associated inviscid system. It is also observed that the position of the shock front for the limit shock solution depends strongly on the function describing the relation between the viscosity and the temperature, and is in general different from the one obtained in \cite{FangZhao2021CPAA} for the case of constant viscosity.
\end{enumerate}

\begin{figure}[th]
	\centering
	\def\svgwidth{240pt}
	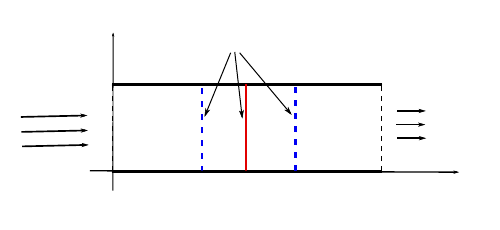
	
	\caption{Steady 1-D shocks for inviscid flows in a finite nozzle.} \label{fig:NormalShocks}
\end{figure}

\begin{rem}
	It is worth of pointing out that the results in this paper for the asymptotic behaviors of the transonic shock solutions with heat conduction or viscosity indicate that there are more than one shock solutions for the inviscid Euler system which can be physical solutions in the sense of being the limit solution as certain physical parameters vanish.
	Hence, it seems impossible to figure out a physical criterion for the steady 1-D Euler system which uniquely determines the shock solution.
\end{rem}

The mathematical analysis on gas flows with shocks in a finite nozzle plays a fundamental role in the theory for fluid dynamics and there have been plenty of studies on this issue. 
In \cite{CourantFriedrichs1948}, Courant and Friedriechs first gave a systematic analysis for various type of inviscid gas flows in a nozzle from viewpoint of nonlinear partial differential equations and special shock solutions had been established.
Since then, various nonlinear PDE models have been proposed and substantial progresses have been made on the well-posedness of these shock solutions.
In \cite{Liu1982ARMA} and \cite{Liu1982CMP}, for gas flows governed by the quasi-one-dimensional model, Liu showed that the shock occurs in the expanding portion of the nozzle is dynamically stable, while the shock in the contracting portion is not. 
For steady 2-D flows in an expanding nozzle, with appropriate pressure condition at the exit, the existence of the shock solution has been established in \cite{CourantFriedrichs1948} by assuming that the flow depends only on the radius parameter and the position of the shock front can be uniquely determined. The well-posedness of the shock solution has been established, for instance, in \cite{ChenYuan2013CPAA,Chen_S2009CMP,CuiYin2008JPDE,LiXinYin2009CMP,LiXinYin2011PJM,LiXinYin2013ARMA}.
While for steady 2-D flows in a finite flat nozzle, there exist infinite shock solutions and the position of the shock front can be arbitrary. By presuming that the shock front goes through a fixed point, given in advance artificially, the stability of the shock solution can also be established with certain selected condition imposed at the exit. For instance, see \cite{ChenFeldman2003JAMS,ChenChenSong2006JDE,XinYin2005CPAM} and references therein.
Without the assumption, in \cite{FangXin2021CPAM}, Fang and Xin develop new methods which successfully determine the position of the shock front with prescribed pressure condition, suggested by Courant and Friedrichs in \cite{CourantFriedrichs1948} at the exit of the nozzle, and establish the existence of the transonic shock solution.
One is also referred to, for instance, \cite{FangGao2021JDE,FangGao2022SIMA,FangLiuYuan2013ARMA,XinYanYin2011ARMA} and references therein for more literature on studies for well-posedness of transonic shocks in a finite nozzle.
By applying the methods developed in \cite{FangXin2021CPAM}, it is also observed that, for a generic perturbation of the flat nozzle with curved boundaries, there may exist multiple shock solutions with the same given pressure at the exit, similar with the phenomena observed in \cite{EmbidGoodmanMajda1984} for steady flows governed by quasi-one-dimensional model.
Hence, it seems impossible to determine the position of the shock front for steady flows in a flat nozzle within the framework of inviscid flows.
Since it is generally physically accepted that the inviscid flows can be regarded as limits of the viscous flows as the viscosity parameter goes to $ 0 $, it is natural to ask whether it is true or not that all shock solutions in the flat nozzle could be the limits of the viscous flows with the same boundary data.
One may referred to, for instance, \cite{BianchiniBressan2005AM,Gilbarg1951,GoodmanXin1992ARMA,GuesWilliams2002IUMJ,GuesMetivierWilliamsZumbrun2004CPAM,GuesMetivierWilliamsZumbrun2005JAMS,Wang2008AcMS,Yu1999ARMA} and references therein for literature on studies for vanishing viscous analysis of the shock profile in the whole space.
For one-dimensional flows in a finite interval, it has been established in \cite{BarkerMelinandZumbrun2019,EndresJenssenWilliams2010DCDS,MelinandZumbrun2019PhyD,Strani2014CPAA} the existence, uniqueness and stability of steady viscous shock solutions with certain inflow-outflow boundary data.
It is interesting that in \cite{FangZhao2021CPAA}, Fang and Zhao show that the viscous shock solutions for 1-D steady flows in a finite nozzle converge as the viscosity vanishes, which indicates that only one shock solution of the inviscid Euler system can be the limit of the viscous shock solutions.
In this paper, we are going to investigate what occurs for other physical effects as the associating parameters vanish.

The paper is organized as follows. In Section 2, the mathematical formulation of the singular limit problems will be described for fluids with heat conductivity or with  temperature-depending viscosity, respectively. The main results will also be described for each problem.
In Section 3, the asymptotic analysis for the heat conductive shock solutions are carried out.
It is shown in Section 3.1 that the heat conductive shock solutions for barotropic gases do not converge to weak solutions of the Euler system as the coefficient of the heat conductivity vanishes.
While it is proved in Section 3.2 that the heat conductive shock solutions for polytropic gases converge to a shock solution of the Euler system.
In Section 4, the asymptotic analysis for the temperature-depending viscous shock solutions are carried out.
In Section 4.1, it is shown that the temperature-depending viscous shock solutions for barotropic gases converge to a shock solution of the Euler system with the position of the shock depending on the parameter $ \delta $ in \eqref{vis_tem_function}.
In Section 4.2, it is observed that the asymptotic behaviors of the temperature-depending viscous shock solutions for polytropic gases strongly depend on $ \delta $ and are more complicated.
In particular, there may exist two different viscous shock solutions as $ \delta>1 $, which do not occur for barotropic gases.
It is shown that one of them converges to a shock solution of the Euler system and the other fails to converge to any weak solution of the Euler system.

\section{Mathematical Problems and Main Results}

In this section, infinite shock solutions for steady 1-D flow in a finite nozzle will firstly be described, then the associating singular limit problems and the results for fluids with heat conduction and with viscosity will be described in detail, respectively.
For fluids with heat conduction, it turns out that there is an essential difference between the asymptotic behavior for barotropic gases and the behavior for polytrotic gases: the heat conductive shock solutions for barotropic gases do not converge to any shock solution as the heat conductivity vanishes, while the ones for polytropic gases do converge to a shock solution for the Euler system without heat conduction. See Theorem \ref{thm: isen-heat} and Theorem \ref{thm: non-heat}.
For fluids with the temperature-depending viscosity, it will be shown that, for both barotropic gases and polytropic gases, the viscous shock solutions do converge to a shock solution for the associated inviscid system, and the position of the shock front depends strongly on the viscosity function of the temperature.


\subsection{Infinite Steady 1-D Shock Solutions for Inviscid Flows}\label{Section 2.1}

Without loss of generality, assume that the nozzle is bounded in the interval:
\begin{equation*}
	\mathcal{N}:=\{x:0<x<1\}.
\end{equation*}
Assume that the ideal fluid, ignoring the heat conductivity and the viscosity, in the nozzle is governed by the following steady 1-D Euler system under the assumption that the flow parameters depend only on the space variable $x\in(0,1)$,
\begin{equation}\label{aa}
	\left\{
	\begin{aligned}
		 & \partial_{x}(\rho u)        =0, \\
		 & \partial_{x}(\rho u^{2}+p)  =0, \\
		 & \partial_{x}(\rho u \Phi)   =0,
	\end{aligned}
	\right.
\end{equation}
where $u$ is the velocity, $(p,\rho)$ are the pressure and the density, $\Phi:=\frac{1}{2}u^{2}+i$ is the Bernoulli constant with $i=e+\frac{p}{\rho}$ the enthalpy and $e$ the internal energy. 
The fluid is assumed to be a polytropic gas such that the following state equation holds:
\begin{equation}\label{state}
	e=\frac{1}{\gamma-1}\cdot\frac{p}{\rho},
\end{equation}
or 
\begin{equation}\label{state_p}
	p=\mr{e}^{\frac{S}{c_{v}}}\rho^{\gamma},
\end{equation}
where $\gamma>1$ is the adiabatic exponent, $S$ is the entropy, and $c_{v}$ is the specific heat at constant volume.
Let $c:=\sqrt{\partial_{\rho}p}=\sqrt{\gamma p/\rho}$ be the sound speed and $M:=\displaystyle\frac{|u|}{c}$ be the Mach number, then the flow is supersonic in case $u>c$, or equivalently, $M>1$, and it is subsonic in case $u<c$, or equivalently, $M<1$.

Let $U=(u,\rho,p)$ represents the state of the flow. 
Then for a uniform supersonic constant state $U_{0}=(q_{0},\rho_{0},p_{0})$  there exists a unique subsonic state $U_{1}=(q_{1},\rho_{1},p_{1})$ which connects $ U_{0} $ via a shock, and across the shock front, there hold the following Rankine-Hugoniot conditions:
\begin{equation}\label{R-H}
	\begin{aligned}
		 & \rho_{0}q_{0}            =\rho_{1}q_{1} ,          \\
		 & \rho_{0}q^{2}_{0}+p_{0}  =\rho_{1}q^{2}_{1}+p_{1}, \\
		 & \Phi_{0}                 =\Phi_{1}.
	\end{aligned}
\end{equation}
Then it can be easily verified that, for any $0<x_{s}<1$, 
\begin{equation}\label{shock-solution}
	U^{0}(x;x_{s}):=
	\left\{
	\begin{aligned}
		U_{0}, & \quad 0\leq x < x_{s},  \\
		U_{1}, & \quad x_{s} < x \leq 1,
	\end{aligned}
	\right.
\end{equation}
gives a normal shock solution to the Euler system \eqref{aa}, with $x_{s}$ being the location of the shock front, which also satisfying the entropy condition $p_{1}>p_{0}$.
That is, for given supersonic state $ U_{0} $ at the entrance of the nozzle, 
there exist infinite shock solutions with the same state $ U_{1} $ behind the shock front, while the location of the shock front $x_{s}$ can be anywhere in the nozzle( see Figure \ref{fig:NormalShocks} ).
Since uniqueness is expected, one may wonder whether there exists a certain criterion which select the unique shock solution. 
In \cite{FangZhao2021CPAA}, Fang-Zhao shows that, among all the shock solutions, only one could be the limit of the viscous shock solutions for viscous flows with the viscosity being constant as the viscosity goes to zero.
Hence, we are wondering whether this is also true for other physical parameters. 
Moreover, if this is true, whether the positions of the shock front for the limit shock solutions coincide with the one in \cite{FangZhao2021CPAA}.
In this paper, we are going to investigate the shock solutions for fluids with heat conduction and with the viscosity depending on the temperature.

We remark that, for simplicity, without loss of generality, we always assume in the remaining part of the paper that
\begin{equation}\label{rho-velocity}
\rho_{0} q_{0}=1.
\end{equation}



\subsection{Singular limit problems for fluids with heat conduction}\label{Section 2.2}

In this subsection, the singular limit problems for fluids with heat conduction will be described in detail. The fluid will be assumed to be a polytropic gas with the state equation \eqref{state}, and a barotropic/isentropic gas with the following simplified state equation
\begin{equation}\label{state_baro}
p=\rho^\gamma.
\end{equation}
It is interesting to observe that the heat conductive shock solutions for barotropic gases do not converge to any shock solution as the heat conductivity goes to zero, while the ones for polytropic gases converge to a shock solution. 
Moreover, for polytropic gases, similar as in \cite{FangZhao2021CPAA}, the position of the shock front for the limit shock solution is uniquely determined, but it is different from the one determined in \cite{FangZhao2021CPAA}.
 

\subsubsection{The barotropic gases.}\label{Section 2.2.1}

The steady flow for barotropic gases with heat conduction is assumed to be governed by the following equations, together with the state equation \eqref{state_baro}, 
\begin{equation}\label{isen-heat-eq}
	\left\{
	\begin{aligned}
		 & \partial_{x}(\rho u)       =0,              \\
		 & \partial_{x}(\rho u \Phi)  =\epsilon e_{xx},
	\end{aligned}
	\right.
\end{equation}
where $\epsilon>0$ is the coefficient of the heat conductivity.
As $\epsilon$ goes to $0$, the system \eqref{isen-heat-eq} formally converges to the following system
\begin{equation}\label{isen-heat-eq_limit}
\left\{
\begin{aligned}
& \partial_{x}(\rho u)       =0,              \\
& \partial_{x}(\rho u \Phi)  =0,
\end{aligned}
\right.
\end{equation}
which is a simplified system of the Euler system \eqref{aa}.

Now let $U_{0}=(q_{0},\rho_{0},p_{0})$ with $p_{0} = \rho_{0}^{\gamma}$ be a supersonic state, then the subsonic state $U_{1}=(q_{1},\rho_{1},p_{1})$ with $p_{1} = \rho_{1}^{\gamma}$, connecting with $ U_0 $ via a shock, should satisfies the following R-H conditions:
\begin{equation}\label{R-H_heat}
\begin{aligned}
& \rho_{0}q_{0}            =\rho_{1}q_{1} ,          \\
& \Phi_{0}                 =\Phi_{1}.
\end{aligned}
\end{equation}
Thus, for any $0<x_{s}<1$, 
\begin{equation}\label{shock-solution_heat}
U_{hb}^{0}(x;x_{s}):=
\left\{
\begin{aligned}
U_{0}, & \quad 0\leq x < x_{s},  \\
U_{1}, & \quad x_{s} < x \leq 1,
\end{aligned}
\right.
\end{equation}
gives a normal shock solution to the simplified system \eqref{isen-heat-eq_limit}, with $x_{s}$ being the location of the shock front.
Then we are going to investigate the following singular limit problem as $\epsilon\sTo 0$, and try to figure out the ``physical'' shock solution among them.

\vskip 5px

\underline{Problem [{\bf HB}]}:

\begin{quotation}
	For any $\epsilon>0$, find the heat conductive shock solution $U = U_{hb}^{\epsilon}(x)$ to the system \eqref{isen-heat-eq} with the boundary conditions:
	\begin{equation}\label{isen-heat-bd}
	U(0)=U_{0},\qquad u(1)=q_{1},
	\end{equation}
	and find shock solutions, among solutions \eqref{shock-solution_heat} to the system \eqref{isen-heat-eq_limit}, which could be the limit of $ U_{hb}^{\epsilon}(x)$ ( or its subsequences ) as $\epsilon\sTo 0$. \qed
\end{quotation}

\vskip 5px

We follow the ideas in \cite{FangZhao2021CPAA} to deal with the problem [{\bf HB}]. However, the computations lead us to surprisingly negative results shown in the theorem below, which is different from the results in \cite{FangZhao2021CPAA}.




\begin{thm}\label{thm: isen-heat}
	For any $\epsilon>0$, there exists a unique solution $U=U_{hb}^{\epsilon}(x)\in C^{2}[0,1]$ to the equation (\ref{isen-heat-eq}) with boundary condition (\ref{isen-heat-bd}). 
	
	However, $U_{hb}^{\epsilon}$ do NOT converge to any shock solution of \eqref{isen-heat-eq_limit} expressed by \eqref{shock-solution_heat} as $\epsilon \to 0$, even in the weak sense.
\end{thm}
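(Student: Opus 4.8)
The plan is to reduce the boundary value problem \eqref{isen-heat-eq}--\eqref{isen-heat-bd} to a single first-order autonomous ODE and then run a phase-line analysis. First, the mass equation together with the normalization \eqref{rho-velocity} gives $\rho u\equiv 1$, so $\rho=1/u$ and every flow quantity becomes a function of $u$ alone. Using \eqref{state_baro} and \eqref{state} one gets
\[
\Phi(u)=\tfrac{1}{2}u^{2}+\tfrac{\gamma}{\gamma-1}u^{1-\gamma},
\qquad e(u)=\tfrac{1}{\gamma-1}u^{1-\gamma},
\qquad e_{x}=-u^{-\gamma}u_{x}.
\]
Since $\rho u\equiv 1$, the energy equation reads $\Phi_{x}=\epsilon e_{xx}$, which integrates once to $\Phi-\epsilon e_{x}=C$ for some constant $C$; substituting the expressions above yields the scalar equation
\[
\epsilon u_{x}=f(u)\defs u^{\gamma}\bigl(C-\Phi(u)\bigr).
\]
The decisive structural fact, which drives everything, is that $\Phi'(u)=u-\gamma u^{-\gamma}$ vanishes precisely at the sonic speed $u_{\ast}\defs\gamma^{1/(\gamma+1)}$, where $\Phi$ attains its unique global minimum; moreover $q_{1}<u_{\ast}<q_{0}$ and $\Phi(q_{0})=\Phi(q_{1})$ by \eqref{R-H_heat}.

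For the existence-uniqueness part I would argue that a monotone orbit joining $q_{0}$ at $x=0$ to $q_{1}$ at $x=1$ requires $f<0$ on all of $[q_{1},q_{0}]$, i.e. $C<\Phi(u)$ there; since $\min\Phi=\Phi(u_{\ast})$, this forces $C<\Phi(u_{\ast})$. All other cases are excluded because $C\ge\Phi(u_{\ast})$ produces an equilibrium of $f$ inside $[q_{1},q_{0}]$ which the orbit cannot cross in finite length, making $u(1)=q_{1}$ unattainable. For $C<\Phi(u_{\ast})$ the profile is smooth and strictly decreasing, and separation of variables converts the right boundary condition into the single scalar constraint
\[
G(C)\defs\int_{q_{1}}^{q_{0}}\frac{\dif u}{u^{\gamma}\bigl(\Phi(u)-C\bigr)}=\frac{1}{\epsilon}.
\]
Here $G$ is continuous and strictly increasing on $(-\infty,\Phi(u_{\ast}))$, with $G(C)\to0$ as $C\to-\infty$ and $G(C)\to+\infty$ as $C\to\Phi(u_{\ast})^{-}$, the blow-up coming from the non-integrable $(u-u_{\ast})^{-2}$ singularity created at the minimum of $\Phi$. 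Thus $G$ is a bijection onto $(0,\infty)$, giving for each $\epsilon>0$ a unique admissible $C=C(\epsilon)$ and hence the unique $C^{2}$ (in fact smooth) solution $U_{hb}^{\epsilon}$.

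For the asymptotics, the constraint $G(C(\epsilon))=1/\epsilon\to\infty$ forces $C(\epsilon)\to\Phi(u_{\ast})^{-}$. I would then track the position $x(u)$ at which the profile equals a prescribed value $u$, namely
\[
x(u)=\epsilon\int_{u}^{q_{0}}\frac{\dif u'}{u'^{\gamma}\bigl(\Phi(u')-C\bigr)}
=1-\epsilon\int_{q_{1}}^{u}\frac{\dif u'}{u'^{\gamma}\bigl(\Phi(u')-C\bigr)}.
\]
For fixed $u>u_{\ast}$ the first integral stays bounded as $\epsilon\to0$ (the singularity sits only at $u_{\ast}$, which is excluded from $[u,q_{0}]$), so $x(u)\to0$; by the symmetric form, $x(u)\to1$ for every fixed $u<u_{\ast}$. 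Since $x(\cdot)$ is strictly decreasing, this pins down $u^{\epsilon}(x)\to u_{\ast}$ for every $x\in(0,1)$: the profile collapses to the constant sonic state, with thin boundary layers absorbing the transitions $q_{0}\to u_{\ast}$ near $x=0$ and $u_{\ast}\to q_{1}$ near $x=1$.

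Finally, because $u^{\epsilon}\to u_{\ast}$ pointwise and the $u^{\epsilon}$ are uniformly bounded and bounded away from $0$, dominated convergence gives $U_{hb}^{\epsilon}\to U_{\ast}$, the constant sonic state $(u_{\ast},1/u_{\ast},u_{\ast}^{-\gamma})$, in $L^{1}(0,1)$ and hence weakly. A distributional limit is unique, and $U_{\ast}$ differs on a set of positive measure from every shock solution \eqref{shock-solution_heat}; therefore no subsequence of $U_{hb}^{\epsilon}$ can converge, even weakly, to any $U_{hb}^{0}(\cdot;x_{s})$, which is exactly the claim. The main technical obstacle is the careful analysis of the singular integral $G$ near the sonic minimum of $\Phi$: both the blow-up that yields $C(\epsilon)\to\Phi(u_{\ast})$ and the splitting estimates for $x(u)$ that localize the two boundary layers and force the interior limit to be the constant sonic state rather than a genuine shock profile.
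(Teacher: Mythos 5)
Your proposal is correct, and it reaches the paper's conclusion by a route that differs in both main steps, although the underlying mechanism is the same. Your reduction is the paper's reduction in disguise: the paper works in the variable $v=u^{1-\gamma}$ (a multiple of $e$), integrates once to get $\kappa\,\partial_x v = f(v)+\alpha_\kappa$ with $f(v)=g(v)-g(v_0)$ and $g(v)=\Phi(u)$, so your constant $C$ is exactly $\Phi(q_0)-\alpha_\kappa$, your sonic value $u_*=\gamma^{1/(\gamma+1)}$ is the paper's $v_*=\gamma^{(1-\gamma)/(1+\gamma)}$, and your divergence $C(\epsilon)\to\Phi(u_*)^-$ is the paper's key Lemma that $\alpha_\kappa\to -f(v_*)>0$. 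The differences are in technique. For existence and monotonicity, the paper invokes the maximum principle and the Hopf lemma for the second-order equation, then inverts $v^{\kappa}$ and locates $\alpha_\kappa$ by sandwiching $H_\kappa(\alpha)=\int_{v_0}^{v_1}\kappa/(f+\alpha)$ between explicit piecewise-linear comparison integrals; you instead exploit that the once-integrated equation is a scalar autonomous ODE, so a phase-line argument (orbits cannot cross equilibria, hence $f<0$ on the whole range, hence $C<\Phi(u_*)$) plus the strict monotonicity and limits of $G$ give existence and uniqueness more elementarily — this is legitimate, and arguably cleaner, since $G$ being a monotone bijection onto $(0,\infty)$ makes the comparison-function construction unnecessary. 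For the vanishing-heat-conductivity limit, the paper extracts a subsequence by Helly's theorem and shows via the weak formulation that any limit satisfies $f(v^0)=f(v_*)$ a.e., hence equals $v_*$ a.e., and stops there (it explicitly skips its step (iii)); you instead carry out the analogue of that skipped step, computing the limit of the inverse profile $x(u)$ directly, which buys a strictly stronger statement: pointwise convergence $u^{\epsilon}(x)\to u_*$ on $(0,1)$ and $L^1(0,1)$ convergence of $U_{hb}^{\epsilon}$ to the constant sonic state, from which non-convergence (even weakly) to any shock solution \eqref{shock-solution_heat} follows at once, with no compactness extraction needed. The splitting estimate you cite for $x(u)$ is sound: for fixed $u>u_*$ the integrand is bounded uniformly in $C<\Phi(u_*)$ because $\Phi(u')-C\geq\Phi(u)-\Phi(u_*)>0$ on $[u,q_0]$, and symmetrically below $u_*$, so monotonicity of $u^{\epsilon}$ pins the interior limit to $u_*$ exactly as you claim.
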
 

\begin{rem}
	Theorem \ref{thm: isen-heat} indicates that NO shock solution in the form of \eqref{shock-solution} can be the limit of the heat conductive shock solutions $U_{hb}^{\epsilon}$ to the system \eqref{isen-heat-eq} with the boundary conditions \eqref{isen-heat-bd}  as $\epsilon \to 0$.
	Hence, it seems that NO shock solution in the form of  \eqref{shock-solution} could be the ``physical'' solution to the system \eqref{isen-heat-eq_limit}.
	It needs further investigation to clarify the asymptotic behaviors of $U_{hb}^{\epsilon}$  as $\epsilon \to 0$ and what does it means.
\end{rem}

\subsubsection{The polytropic gases.}
The steady flow for polytropic gases with heat conduction is assumed to be governed by the following equations, together with the state equation \eqref{state},
\begin{equation}\label{non-heat-eq}
	\left\{
	\begin{aligned}
		 & \partial_{x}(\rho u)        =0,               \\
		 & \partial_{x}(\rho u^{2}+p)  =0,               \\
		 & \partial_{x}(\rho u \Phi)   =\epsilon e_{xx},
	\end{aligned}
	\right.
\end{equation}
where $\epsilon>0$ is the coefficient of the heat conductivity.
As $\epsilon$ goes to $0$, the system \eqref{non-heat-eq} formally converges to the
Euler system \eqref{aa}.

Let $ U^{0}(x;x_{s}) $ defined by \eqref{shock-solution} be shock solutions to the Euler system \eqref{aa}, with $0<x_{s}<1$ being the location of the shock front.
Then we are going to study the following singular limit problem as $ \epsilon\sTo 0 $ to find the ``physical'' shock solution among them.

\vskip 5px

\underline{Problem [{\bf HP}]}:

\begin{quotation}
	For any $\epsilon>0$, find the heat conductive shock solution $U = U_{hp}^{\epsilon}(x)$ to the system \eqref{non-heat-eq} with the boundary conditions:
	\begin{equation}\label{non-heat-bd}
	U(0)=U_{0},\qquad p(1)=p_{1},
	\end{equation}
	and find shock solutions, among solutions \eqref{shock-solution} to the system \eqref{aa}, which could be the limit of $ U_{hp}^{\epsilon}(x)$ ( or its subsequences ) as $\epsilon\sTo 0$. \qed
\end{quotation}

\vskip 5px

We still follow the ideas in \cite{FangZhao2021CPAA} to deal with the problem [{\bf HP}].
Then degeneracy may occur in the coefficients of the principle part of the equation, which brings new difficulties in solving the boundary value problem \eqref{non-heat-eq}-\eqref{non-heat-bd}.
In this paper, we give a sufficient condition to avoid the degeneracy such that the existence of the heat conductive shock solution $U_{hp}^{\epsilon}(x)$ can be established.
Then the techniques in \cite{FangZhao2021CPAA} can also be employed to establish the convergence of $U_{hp}^{\epsilon}(x)$ as $\epsilon\sTo 0$.
We are going to prove the following theorem.

\begin{thm}\label{thm: non-heat}
	Suppose that the state equation \eqref{state} and the Mach number 
	$$M_{0}=\displaystyle \sqrt{\frac{\rho_{0}q_{0}^{2}}{\gamma p_{0}}}$$ 
	of the flow at the entrance of the nozzle satisfy either one of the following two conditions:
	\begin{enumerate}
		\item[(HP1)] $1<\gamma<3$, and 
			\begin{equation}\label{Mach}
			1<M_{0}^{2}\leq \frac{3\gamma-1}{\gamma(3-\gamma)};
			\end{equation}
		\item[(HP2)] $\gamma \geq 3$, and $ M_0>1 $.
	\end{enumerate}
	Then for any $\epsilon>0$, there exists a unique solution $U=U_{hp}^{\epsilon}(x)\in C^{2}[0,1]$ to the system (\ref{non-heat-eq}) with the boundary conditions (\ref{non-heat-bd}).

	Furthermore, $U_{hp}^{\epsilon}(x)$ converges to $\hat{U}_{hp}^{0}$ in $L^1(0,1)$ as $\epsilon \to 0$,
	where
	\begin{equation}
		\hat{U}^{0}_{hp}:=
		\left\{
		\begin{aligned}
			U_{0}, & \quad 0\leq x < X_{shp},  \\
			U_{1}, & \quad X_{shp} < x \leq 1,
		\end{aligned}
		\right.
	\end{equation}
	with the shock location
	\begin{equation}
		X_{shp}:=\displaystyle\frac{\gamma+1}{2(\gamma-1)}\cdot\frac{M_{0}^{2} \gamma-1}{M_{0}^{2} \gamma+1},
	\end{equation}
	is a normal shock solution to the equations \eqref{aa} among the ones \eqref{shock-solution}.
\end{thm}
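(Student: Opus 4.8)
The plan is to eliminate the two algebraic (no-$\epsilon$) balance laws first, reducing Problem [\textbf{HP}] to a scalar boundary value problem for the velocity, then to solve that problem by a shooting argument in the constant of a first integral, and finally to read the limiting shock position off the explicit separated form of the profile equation.

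\emph{Reduction and the role of the hypotheses.} The first two equations of \eqref{non-heat-eq} carry no $\epsilon$, so by \eqref{rho-velocity} they hold exactly, $\rho u\equiv1$ and $\rho u^{2}+p\equiv\rho_{0}q_{0}^{2}+p_{0}=:B$ with $B=q_{0}+p_{0}$, whence $\rho=1/u$ and $p=B-u$; the exit condition $p(1)=p_{1}$ then becomes $u(1)=B-p_{1}=q_{1}$ by \eqref{R-H}, so \eqref{non-heat-bd} reads $u(0)=q_{0}$, $u(1)=q_{1}$ with $q_{1}<q_{0}$. Using \eqref{state}, both $e$ and $\Phi$ become quadratics in the single unknown $u$,
\begin{equation*}
	e(u)=\frac{Bu-u^{2}}{\gamma-1},\qquad \Phi(u)=-\frac{\gamma+1}{2(\gamma-1)}u^{2}+\frac{\gamma B}{\gamma-1}u,
\end{equation*}
and the third equation reduces to $\Phi(u)_{x}=\epsilon\,e(u)_{xx}$, whose first integral is
\begin{equation*}
	\epsilon\,e'(u)\,u_{x}=\Phi(u)-C,\qquad e'(u)=\frac{B-2u}{\gamma-1},\qquad \Phi(u)-C=-\frac{\gamma+1}{2(\gamma-1)}(u-a)(u-b),
\end{equation*}
for a constant $C$, where the roots obey the $C$-independent identity $a+b=\frac{2\gamma B}{\gamma+1}=q_{0}+q_{1}$. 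The principal coefficient $e'(u)$ vanishes at $u=B/2$, which is precisely the degeneracy flagged in the text; a direct computation gives
\begin{equation*}
	q_{1}-\tfrac B2=\frac{B\big((3\gamma-1)+\gamma(\gamma-3)M_{0}^{2}\big)}{2(\gamma+1)(1+\gamma M_{0}^{2})},
\end{equation*}
so that, for supersonic $M_{0}>1$, the inequality $q_{1}\ge B/2$ holds exactly when (HP1) or (HP2) is satisfied. I therefore work under $q_{1}\ge B/2$, which forces $2u-B>0$ (hence $e'(u)\neq0$) on $(q_{1},q_{0}]$, rendering the profile equation non-degenerate.

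\emph{Existence and uniqueness for fixed $\epsilon$.} For $C<\Phi(q_{0})=\Phi(q_{1})$ the level $C$ meets the downward parabola $\Phi$ outside $[q_{1},q_{0}]$, i.e.\ $b<q_{1}<q_{0}<a$, so $\Phi(u)-C>0$ and $e'(u)<0$ on $(q_{1},q_{0})$ and the autonomous first-order ODE with $u(0)=q_{0}$ yields a strictly decreasing trajectory. Its length
\begin{equation*}
	L(C)=\int_{q_{1}}^{q_{0}}\frac{\epsilon\,(2u-B)}{(\gamma-1)\big(\Phi(u)-C\big)}\,\dif u
\end{equation*}
is continuous and strictly increasing in $C$, with $L\to0$ as $C\to-\infty$ and $L\to+\infty$ as $C\uparrow\Phi(q_{0})$; hence a unique $C=C_{\epsilon}$ solves $L(C_{\epsilon})=1$. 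This produces the unique monotone $u=U_{hp}^{\epsilon}$, and $\rho=1/u$, $p=B-u$ recover the claimed $C^{2}$ solution of \eqref{non-heat-eq}--\eqref{non-heat-bd}.

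\emph{Asymptotics and the shock location.} Since $L(C_{\epsilon})=1$ forces $C_{\epsilon}\uparrow\Phi(q_{0})$, the roots satisfy $a\downarrow q_{0}$, $b\uparrow q_{1}$, and by $a+b=q_{0}+q_{1}$ they move symmetrically, so $a-q_{0}=q_{1}-b=:\alpha(\epsilon)\to0^{+}$ is a single small parameter. With $P=\frac{2q_{0}-B}{q_{0}-q_{1}}>0$ and $Q=\frac{B-2q_{1}}{q_{0}-q_{1}}\le0$, partial fractions give the implicit solution explicitly; the length constraint yields $\epsilon\ln\frac{q_{0}-q_{1}}{\alpha}\to\frac{\gamma+1}{2(P-Q)}$, while for any fixed interior $v\in(q_{1},q_{0})$ the abscissa $x^{\epsilon}(v)$ with $u^{\epsilon}=v$ tends to $\frac{2P}{\gamma+1}\lim\big(\epsilon\ln\frac1\alpha\big)=\frac{P}{P-Q}$. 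As all interior values are attained in an $O(\epsilon)$ layer, $U_{hp}^{\epsilon}$ converges pointwise, and (being monotone and uniformly bounded, by dominated convergence) in $L^{1}(0,1)$, to the step function with jump at
\begin{equation*}
	X_{shp}=\frac{P}{P-Q}=\frac{2q_{0}-B}{2(q_{0}+q_{1}-B)}=\frac{\gamma+1}{2(\gamma-1)}\cdot\frac{M_{0}^{2}\gamma-1}{M_{0}^{2}\gamma+1},
\end{equation*}
the last equality using $q_{0}+q_{1}-B=\frac{(\gamma-1)B}{\gamma+1}$ and $2q_{0}-B=\frac{(\gamma M_{0}^{2}-1)B}{1+\gamma M_{0}^{2}}$.

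\emph{Main obstacle.} The delicate point is the last step: the position is invisible to any integral of $\Phi$ (which is constant in the limit), and is instead encoded in the exponentially small gap $\alpha(\epsilon)$ between the true equilibria $a,b$ and the end states $q_{0},q_{1}$. Controlling $\alpha(\epsilon)$, and thereby the balance of the two logarithmic singularities weighted by $P$ and $Q$, is what pins down $X_{shp}$, with the symmetry identity $a+b=q_{0}+q_{1}$ the key simplification. The borderline configuration $q_{1}=B/2$ (equality in (HP1), where $Q=0$ and $X_{shp}=1$) must be treated separately as a limiting degenerate case.
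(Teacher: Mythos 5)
Your proposal is correct, and its skeleton coincides with the paper's: you eliminate the two exact conservation laws, pass to a first integral with an unknown constant (your $C$ is the paper's $\Phi(p_{0})-\alpha_{\kappa}$), solve the fixed-$\epsilon$ problem by a shooting argument in that constant (your $L(C)$ is literally the paper's $H_{\kappa}(\alpha)$ of Lemma~\ref{non-alpha-exist} after the linear substitution $p=B-u$, which turns the paper's increasing pressure profile into your decreasing velocity profile), and your non-degeneracy condition $q_{1}\ge B/2$ is exactly the paper's $A-2p_{1}\ge 0$ from Lemma~\ref{hp-p-monotone} — your Mach-number computation agrees with the paper's term by term. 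Where you genuinely depart is the decisive step locating the shock. The paper (Lemma~\ref{I}) proves $\lim_{\kappa\to0}X_{\kappa}(p)/(1-X_{\kappa}(p))=(A-2p_{0})/(A-2p_{1})$ by two-sided comparison estimates: tangent/chord bounds on the concave $f$ near each endpoint, an auxiliary sliding parameter $\sigma$, and a squeeze — a robust method inherited from \cite{FangZhao2021CPAA} that works for any strictly concave $f$ with $f'(p_{0})=-f'(p_{1})$. You instead exploit the fact that $\Phi$ is exactly quadratic in this reduction, so the level-set roots $a,b$ are explicit, the symmetry $a+b=q_{0}+q_{1}$ (the paper's $f'(p_{0})=-f'(p_{1})$ in disguise) collapses everything to the single gap $\alpha(\epsilon)=a-q_{0}=q_{1}-b$, and partial fractions give closed forms for the length constraint, $\epsilon\ln(1/\alpha)\to\tfrac{\gamma+1}{2(P-Q)}$, and the interior abscissae, $x^{\epsilon}(v)\to P/(P-Q)$ for every $v\in(q_{1},q_{0})$. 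This buys sharper information (the exponential rate of $\alpha(\epsilon)$, hence the $O(\epsilon)$ width of the layer) at the cost of generality: the paper's squeeze would survive a non-quadratic $\Phi$, your partial fractions would not. Your monotone-plus-dominated-convergence route to the $L^{1}$ limit is also somewhat cleaner than the paper's Cauchy-sequence argument via $X_{\kappa}\to X_{s}$ in $L^{1}(p_{0},p_{1})$.

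Two points to tighten. First, you conclude only a ``unique monotone'' solution, while the theorem asserts uniqueness outright; this is easily repaired within your framework: every $C^{2}$ solution of the boundary value problem satisfies your autonomous first integral for some constant $C$, and if $C\ge\Phi(q_{0})$ the phase line gives $u_{x}(0)\ge0$, hence (solutions of scalar autonomous first-order equations being monotone) $u(1)\ge q_{0}>q_{1}$, a contradiction; so $C<\Phi(q_{0})$ and your shooting captures all solutions. The paper accomplishes the same task through its appendix Lemma~\ref{lemma2} together with the Hopf lemma. Second, you correctly flag the equality case in \eqref{Mach} ($q_{1}=B/2$, i.e. $Q=0$, $X_{shp}=1$) as needing separate treatment; note the paper's proof is no more complete there — its strict inequality $A-2p^{\kappa}>0$ up to the boundary fails at equality and the limiting jump sits at the exit $x=1$, outside the family \eqref{shock-solution} with $0<x_{s}<1$ — so deferring this borderline case leaves you on equal footing with the published argument.
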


It is worth of pointing out the following three remarks associating with Theorem \ref{thm: non-heat}.

\begin{rem}
	The assumption that either (HP1) or (HP2) holds is only a sufficient condition which is needed technically in the proof of Theorem \ref{thm: non-heat} to avoid degeneracy of the coefficient in the principle part of the reformulated equations. One is referred to Subsection 3.2 for more details. It needs further investigation whether the results of Theorem \ref{thm: non-heat} still hold as both (HP1) and (HP2) fail.
\end{rem}

\begin{rem}
	Theorem \ref{thm: non-heat} indicates that, under the condition that either (HP1) or (HP2) holds, there exists a unique heat conductive shock solution $ U_{hp}^{\epsilon}(x) $ to the boundary value problem \eqref{non-heat-eq}-\eqref{non-heat-bd} for every heat conductivity $ \epsilon>0 $, and they converge to a shock solution among \eqref{shock-solution} to the Euler system \eqref{aa} as $ \epsilon $ goes to zero.
	This fact is essentially different from the results obtained in Theorem \ref{thm: isen-heat} for barotropic gases, in which the heat conductive shock solutions do NOT converge to any shock solution as $ \epsilon\sTo 0 $.
	The reasons why such a difference appears is a quite interesting topic worth to be further studied.
\end{rem}

\begin{rem}
	Similar as the results obtained in \cite{FangZhao2021CPAA} for viscous fluids with constant viscosity, Theorem \ref{thm: non-heat} shows that there exists a shock solution $\hat{U}_{hp}^{0}$ among \eqref{shock-solution} which is the limit of the heat conductive shock solutions $ U_{hp}^{\epsilon} $ as $ \epsilon $ vanishes.
	However, it can be easily checked that the shock solution $\hat{U}_{hp}^{0}$ is different from the one obtained in \cite{FangZhao2021CPAA}.
	Namely, different physical effects may converge to different shock solutions as the associating physical parameters vanish.
	Such phenomena will also be observed below for fluids with temperature-depending viscosity.
	Hence, it seems that the asymptotic analysis for physical parameters do NOT give unified uniqueness criterion selecting the ``physical'' shock solutions among \eqref{shock-solution}.
\end{rem}


\subsection{Singular limit problems for fluids with temperature-depending viscosity}\label{Section 2.3}

In this subsection, the singular limit problems for fluids with temperature-depending viscosity will be described in detail.
Also, the fluid will be assumed to be a barotropic/isentropic gas with the simplified state equation \eqref{state_baro} and a polytropic gas with the state equation \eqref{state}, respectively.
Moreover, the temperature $ T $ satisfies $p=\rho R T$, where $R$ is the constant for ideal gases.
For both types of fluids, the viscosity $\tilde{\mu}$ depends on the temperature $ T $, which is described by the following function: 
\begin{equation}\label{vis_tem_function}
\tilde{\mu}(T)=\mu T^\delta,
\end{equation}
where $\delta>0$ is a fixed constant and $\mu>0$ is a small constant which is called the coefficient of temperature-depending viscosity in this paper.
We are going to investigate the asymptotic behavior of the steady shock solutions in a finite nozzle as $ \mu\sTo 0 $.
Note that in case $\delta=0$, the viscosity does not depend on temperature and it is a constant, which is exactly the case that has been studied in \cite{FangZhao2021CPAA}.
As $\delta>0$, similarly as in \cite{FangZhao2021CPAA}, it will be shown in this paper that the viscous shock solutions converge to a shock solution as $ \mu\sTo 0 $. 
However, it will be observed that the position of the shock front for the limit shock solution depends on the parameter $ \delta $.
Namely, it depends on a parameter in the viscosity function \eqref{vis_tem_function} which goes beyond the parameters for the inviscid Euler system.
Hence, it seems that one cannot simply determine the ``physical'' shock solution via the vanishing viscosity analysis.
Moreover, it is observed that there may exist two different viscous shock solutions for polytropic gases, which will not occur for barotropic gases.
The non-uniqueness of the viscous shock solutions results in more complicated computations and analysis.


\subsubsection{The barotropic gases.}

Under the assumption \eqref{vis_tem_function} for the viscosity, the steady flow for barotropic/isentropic gases with temperature-depending viscosity is assumed to be governed by the following equations, together with the state equation \eqref{state_baro},
\begin{equation}\label{isen-viscous-eq}
	\left\{
	\begin{aligned}
		 & \partial_{x}(\rho u)        =0,                                        \\
		 & \partial_{x}(\rho u^{2}+p)  =\mu \partial_{x}(T^{\delta}\partial_{x}u). \\
	\end{aligned}
	\right.
\end{equation}
As the coefficient of temperature-depending viscosity $ \mu $ goes to $ 0 $, the system \eqref{isen-viscous-eq} formally converges to the following system
\begin{equation}\label{isen-viscous-eq_limit}
\left\{
\begin{aligned}
& \partial_{x}(\rho u)        =0,                                        \\
& \partial_{x}(\rho u^{2}+p)  =0, \\
\end{aligned}
\right.
\end{equation}
which is a simplified system of the Euler system \eqref{aa}, and is usually called the steady isentropic Euler system.

Now let $U_{0}=(q_{0},\rho_{0},p_{0})$ with $p_{0} = \rho_{0}^{\gamma}$ be a supersonic state, then the subsonic state $U_{1}=(q_{1},\rho_{1},p_{1})$ with $p_{1} = \rho_{1}^{\gamma}$, connecting with $ U_0 $ via a shock, should satisfies the following R-H conditions:
\begin{equation}\label{R-H_vis}
\begin{aligned}
& \rho_{0}q_{0}            =\rho_{1}q_{1} ,          \\
& \rho_{0}q^{2}_{0}+p_{0}  =\rho_{1}q^{2}_{1}+p_{1}.
\end{aligned}
\end{equation}
Thus, for any $0<x_{s}<1$, 
\begin{equation}\label{shock-solution_vis}
U_{vb}^{0}(x;x_{s}):=
\left\{
\begin{aligned}
U_{0}, & \quad 0\leq x < x_{s},  \\
U_{1}, & \quad x_{s} < x \leq 1,
\end{aligned}
\right.
\end{equation}
gives a normal shock solution to the simplified system \eqref{isen-viscous-eq_limit}, with $x_{s}$ being the location of the shock front.
Then we are going to investigate the following singular limit problem as $\mu\sTo 0$, and try to figure out the ``physical'' shock solution among them.

\vskip 5px

\underline{Problem [{\bf VB}]}:

\begin{quotation}
	For any $\mu>0$, find the viscous shock solution $U = U_{vb}^{\mu}(x)$ to the system \eqref{isen-viscous-eq} with the boundary conditions:
	\begin{equation}\label{isen-viscous-bd}
		U(0)=U_{0},\qquad u(1)=q_{1}.
	\end{equation}
	and find shock solutions, among solutions \eqref{shock-solution_vis} to the system \eqref{isen-viscous-eq_limit}, which could be the limit of $ U_{vb}^{\mu}(x)$ ( or its subsequences ) as $\mu\sTo 0$. \qed
\end{quotation}

\vskip 5px


The ideas and techniques in \cite{FangZhao2021CPAA} can also be employed to deal with problem [{\bf VB}], which leas us to the following theorem.
\begin{thm} \label{thm: isen-viscous}
	For any $\mu>0$, there exists a unique solution $U=U_{vb}^{\mu}(x)\in C^{2}[0,1]$ to the equations (\ref{isen-viscous-eq}) with boundary conditions (\ref{isen-viscous-bd}).

	Furthermore, $U^{\mu}_{vb}$ converges to $\hat{U}_{vb}^{0}$ in $L^1(0,1)$ as $\mu \to 0$,
	where
	\begin{equation}
		\hat{U}^{0}_{vb}:=
		\left\{
		\begin{aligned}
			U_{0}, & \quad 0\leq x < X_{svb},  \\
			U_{1}, & \quad X_{svb} < x \leq 1,
		\end{aligned}
		\right.
	\end{equation}
	with the shock location
	\begin{equation}\label{shock_location_vb}
		X_{svb}:=\left(1+(\frac{q_{0}}{q_{1}})^{\delta-\gamma-1}\frac{q_{0}^{\gamma+1}-\gamma}{\gamma-q_{1}^{\gamma+1}}\right)^{-1}
		=\left(1+(\frac{M_0}{M_1})^\frac{2\delta}{\gamma+1} \frac{1-{M_0}^{-2}}{M_1^{-2}-1}\right)^{-1}.
	\end{equation}
	is a normal shock solution to the equations \eqref{isen-viscous-eq_limit} among the ones \eqref{shock-solution_vis}
\end{thm}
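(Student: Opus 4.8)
The plan is to follow the strategy of \cite{FangZhao2021CPAA}: reduce the boundary value problem \eqref{isen-viscous-eq}--\eqref{isen-viscous-bd} to a scalar first-order ODE carrying a free integration constant, solve it by a shooting argument in that constant, and then read off the shock location from a boundary-layer analysis of the resulting singular integral. The continuity equation together with the normalization \eqref{rho-velocity} gives $\rho u\equiv 1$, so that $\rho=1/u$, $p=u^{-\gamma}$ and $T=p/(\rho R)=u^{1-\gamma}/R$ are all smooth functions of $u$ alone; in particular $T^{\delta}$ is a smooth positive function of $u$. Integrating the momentum equation once in $x$ yields
\begin{equation*}
	g(u)-C=\mu\,T^{\delta}(u)\,u_{x},\qquad g(u):=u+u^{-\gamma},
\end{equation*}
where $C$ is an integration constant, and the boundary data \eqref{isen-viscous-bd} become $u(0)=q_{0}$, $u(1)=q_{1}$ (the remaining components being consistent through $\rho=1/u$, $p=u^{-\gamma}$). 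A direct computation shows that $g'(u)=1-\gamma u^{-\gamma-1}$ vanishes exactly at the sonic speed $u_{*}=\gamma^{1/(\gamma+1)}$, and the R--H relations \eqref{R-H_vis} say precisely that $g(q_{0})=g(q_{1})=:C_{0}$ with $q_{1}<u_{*}<q_{0}$; thus $q_{0},q_{1}$ are the two roots of $g(u)=C_{0}$ and $g'(q_{0})>0>g'(q_{1})$.

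For existence and uniqueness I would treat $C$ as a shooting parameter. For $C>C_{0}$ one has $g(u)-C<0$ on the whole closed interval $[q_{1},q_{0}]$, so the solution issuing from $u(0)=q_{0}$ decreases strictly and monotonically and reaches $q_{1}$ at
\begin{equation*}
	x_{1}(C;\mu)=\mu\int_{q_{1}}^{q_{0}}\frac{T^{\delta}(v)}{C-g(v)}\,\dif v .
\end{equation*}
Since the integrand is positive and smooth for $C>C_{0}$, the map $C\mapsto x_{1}(C;\mu)$ is continuous and strictly decreasing, with $x_{1}\To+\infty$ as $C\To C_{0}^{+}$ (logarithmic blow-up at the two simple zeros of $C_{0}-g$) and $x_{1}\To 0$ as $C\To+\infty$. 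Hence there is a unique $C=C^{\mu}>C_{0}$ solving $x_{1}(C^{\mu};\mu)=1$, yielding a unique monotone profile $u^{\mu}_{vb}$; smoothness of the right-hand side together with $u$ bounded away from $0$ upgrades it to $C^{2}[0,1]$. One also checks that no $C\le C_{0}$ and no non-monotone solution can join $q_{0}$ to the smaller value $q_{1}$, which gives the uniqueness claim.

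For the vanishing-viscosity limit, the identity $\int_{q_{1}}^{q_{0}}T^{\delta}/(C^{\mu}-g)\,\dif v=1/\mu\To\infty$ forces $C^{\mu}\To C_{0}^{+}$. Inverting the profile gives $x(u)=\mu\int_{u}^{q_{0}}T^{\delta}(v)/(C^{\mu}-g(v))\,\dif v$, and by monotonicity it suffices to track where interior values of $u$ are sent. Writing $C^{\mu}=C_{0}+\eta^{\mu}$ with $\eta^{\mu}\To 0^{+}$, as $\eta^{\mu}\To 0$ the integral is dominated by the two endpoints, where $C^{\mu}-g$ has simple zeros; each endpoint contributes a term proportional to $-\ln\eta^{\mu}$ with coefficients $a:=T^{\delta}(q_{0})/g'(q_{0})$ (near $q_{0}$) and $b:=T^{\delta}(q_{1})/|g'(q_{1})|$ (near $q_{1}$), the remainder being $O(1)$. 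Consequently $x(\bar u)\To a/(a+b)$ for every fixed interior $\bar u$, so the entire transition collapses onto the single location $X_{svb}=a/(a+b)$; substituting $g'(q_{0})=(q_{0}^{\gamma+1}-\gamma)/q_{0}^{\gamma+1}$, $|g'(q_{1})|=(\gamma-q_{1}^{\gamma+1})/q_{1}^{\gamma+1}$ and the explicit form of $T^{\delta}$ at the two states, then simplifying by the R--H relations, reduces $a/(a+b)$ to \eqref{shock_location_vb}. Monotonicity finally yields pointwise convergence $u^{\mu}_{vb}\To q_{0}$ for $x<X_{svb}$ and $u^{\mu}_{vb}\To q_{1}$ for $x>X_{svb}$, and since the profiles are uniformly bounded in $[q_{1},q_{0}]$, bounded convergence upgrades this to $L^{1}(0,1)$ convergence of $U^{\mu}_{vb}=(u,1/u,u^{-\gamma})$ to $\hat{U}^{0}_{vb}$.

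I expect the main obstacle to be the asymptotic evaluation of the singular integral that pins down $X_{svb}$: one must justify that, as $\eta^{\mu}\To 0$, both numerator and denominator of $x(\bar u)$ are governed by the logarithmic endpoint contributions and that the $O(1)$ remainders wash out in the ratio, so that the shock position is determined solely by the residue-type coefficients $a,b$ at the two states. Controlling these contributions uniformly in $\bar u$ on compact interior subsets, and checking that the resulting ratio matches \eqref{shock_location_vb}, is the technical heart of the argument; the remaining steps are a by-now standard shooting-plus-monotonicity scheme as in \cite{FangZhao2021CPAA}, the only genuine novelty here being the non-constant weight $T^{\delta}(u)$, which modifies the coefficients $a,b$ (and hence the limiting shock location) but not the structure of the analysis.
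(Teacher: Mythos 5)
Your proposal follows essentially the same route as the paper's proof: integrating the momentum equation once to obtain $\mu\,T^{\delta}(u)\,\partial_{x}u=g(u)-C$ with $g(u)=u+u^{-\gamma}$ (your constant $C$ is the paper's $g(q_{0})-\alpha_{\mu}$, and $C>C_{0}$ corresponds to $\alpha_{\mu}<0$); your monotone shooting in $C$ via the strictly decreasing map $C\mapsto x_{1}(C;\mu)$ is the paper's inverse-function formulation \eqref{isen-tem-ODE} together with Lemma \ref{lem:vb-alpha}; the forced convergence $C^{\mu}\to C_{0}^{+}$ is the paper's $\alpha_{\mu}\to 0$; and your endpoint-logarithm evaluation $x(\bar u)\to a/(a+b)$ is exactly the paper's computation of $\lim_{\mu\to 0}I_{\mu}(u)=-\frac{f'(q_{1})}{f'(q_{0})}\left(\frac{q_{1}}{q_{0}}\right)^{\delta}$, so the structure is sound throughout. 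The one concrete point to repair is the bookkeeping of the viscosity weight: the paper's reduced equation \eqref{mu-isen-first-order} carries the weight $(u^{\mu})^{-\delta}$, whereas your literal $T^{\delta}=\left(u^{1-\gamma}/R\right)^{\delta}=R^{-\delta}u^{-(\gamma-1)\delta}$ produces coefficients $a=T^{\delta}(q_{0})/g'(q_{0})$, $b=T^{\delta}(q_{1})/|g'(q_{1})|$ in which $\delta$ is effectively replaced by $(\gamma-1)\delta$; the constant $R^{-\delta}$ cancels in the ratio $a/(a+b)$, but the exponent does not, so your claimed simplification to \eqref{shock_location_vb} holds verbatim only if $\gamma=2$, and in general your route yields \eqref{shock_location_vb} with $\delta$ replaced by $(\gamma-1)\delta$. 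This mismatch traces back to the paper itself (compare $T^{\delta}$ in \eqref{isen-viscous-eq} against $(u^{\mu})^{-\delta}$ in \eqref{mu-isen-first-order}), so it does not affect the validity of your argument's structure — only the exponent appearing in the limiting shock position — but to land exactly on the stated formula you must adopt the paper's normalization of the weight.
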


\begin{rem}
	This result is similar to the one for barotropic case in \cite{FangZhao2021CPAA}, with a difference on the shock location of the limit solution. 
	It can be easily observed in \eqref{shock_location_vb} that the position of the shock front depends on the value of the parameter $\delta$ in \eqref{vis_tem_function}.
	As $\delta$ becomes greater, the shock will be closer to the entrance of the nozzle. 
\end{rem}

\subsubsection{The polytropic gases.}

Under the assumption  \eqref{vis_tem_function} for the viscosity, the steady flow for polytropic gases with temperature-depending viscosity is assumed to be governed by the following equations, together with the state equation \eqref{state_p},
\begin{equation}\label{non-viscous-eq}
	\left\{
	\begin{aligned}
		 & \partial_{x}(\rho u)        =0,                                           \\
		 & \partial_{x}(\rho u^{2}+p)  =\mu \partial_{x}(T^{\delta}\partial_{x}u),   \\
		 & \partial_{x}(\rho u \Phi)   =\mu \partial_{x}(T^{\delta}u\partial_{x} u).
	\end{aligned}
	\right.
\end{equation}
As the coefficient of temperature-depending viscosity $ \mu $ goes to $ 0 $, the system \eqref{non-viscous-eq} formally converges to the Euler system \eqref{aa}.

Let $ U^{0}(x;x_{s}) $ defined by \eqref{shock-solution} be shock solutions to the Euler system \eqref{aa}, with $0<x_{s}<1$ being the location of the shock front.
Then we are going to study the following singular limit problem as $ \mu\sTo 0 $ to find the ``physical'' shock solution among them.

\vskip 5px

\underline{Problem [{\bf VP}]}:

\begin{quotation}
	For any $\mu>0$, find the viscous shock solution $U = U_{vp}^{\epsilon}(x)$ to the system \eqref{non-viscous-eq} with the boundary conditions:
	\begin{equation}\label{non-viscous-bd}
		U(0)=U_{0},\qquad u(1)=q_{1}.
	\end{equation}
	and find shock solutions, among solutions \eqref{shock-solution} to the system \eqref{aa}, which could be the limit of $ U_{vp}^{\mu}(x)$ ( or its subsequences ) as $\mu\sTo 0$. \qed
\end{quotation}

\vskip 5px

Again, the ideas and techniques in \cite{FangZhao2021CPAA} are employed to deal with the problem [{\bf VP}].
However, it turns out that the existence and uniqueness of $C^2$ viscous shock solutions to the problem \eqref{non-viscous-eq}-\eqref{non-viscous-bd} depends on the value of the parameter $\delta$ in \eqref{vis_tem_function}. 
The following theorem will be proved in this paper.
\begin{thm}\label{thm: non-viscous}
	Suppose
	\begin{equation}\label{mu-gamma}
		1<M_{0}^{2}<\frac{2\gamma}{\gamma-1}.
	\end{equation}

\begin{enumerate}
	\item If $0 < \delta \leq 1$, then there exists a small constant $\mu_1>0$ such that,
	for any $\mu \in (0, \mu_1)$, there exists a unique solution $U=U_{vp}^{\mu}(x)\in C^{2}[0,1]$ to the boundary value problem (\ref{non-viscous-eq})-(\ref{non-viscous-bd}).
	
	Furthermore, $U^{\mu}_{vp}$ converges to $\hat{U}_{vp}^{0}$ in $L^1(0,1)$ as $\mu \to 0$,
	where $\hat{U}_{vp}^{0}$ is one of the normal shock solutions \eqref{shock-solution} to the system \eqref{aa} with the following expression
	\begin{equation}\label{U_0vp}
	\hat{U}^{0}_{vp}:=
	\left\{
	\begin{aligned}
	U_{0}, & \quad 0\leq x < X_{svp},  \\
	U_{1}, & \quad X_{svp} < x \leq 1,
	\end{aligned}
	\right.
	\end{equation}
	where
	\begin{align}\label{shock_location_vp}
	X_{svp}:&= \left( 1+ \left(\frac{\frac{\gamma+1}{\gamma-1} - \frac{q_1}{q_0}}{\frac{\gamma+1}{\gamma-1} - \frac{q_0}{q_1}}\right)^{\delta}
	\frac{q_1}{q_0} \right)^{-1} \\
	&=\left( 1+\left(\frac{M_0}{M_1}\right)^{2\delta}
	\left(\frac{1-M_0^{-2}}{M_1^{-2}-1} \right)^{1+2\delta}
	\right)^{-1},
	\end{align}
	represents the position of the shock front.
	
	\item If $\delta>1$, then there exists a small constant $\mu_2>0$ such that, for any $\mu \in (0,\mu_2)$,
	there exist two viscous shock solutions $U_{vp1}^\mu(x), U_{vp2}^\mu(x) \in C^2[0,1]$ to the  boundary value problem (\ref{non-viscous-eq})-(\ref{non-viscous-bd}).
	
	Furthermore, only one of the two solutions, say, $U_{vp1}^\mu$ converges to $\hat{U}_{vp}^{0}$ in $L^{1}(0,1)$ as $\mu\to 0$, where $\hat{U}_{vp}^{0}$, defined with the same expression as in \eqref{U_0vp}, is one of the normal shock solutions \eqref{shock-solution} to the system \eqref{aa}.
\end{enumerate}


\end{thm}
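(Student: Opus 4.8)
The plan is to follow the strategy of \cite{FangZhao2021CPAA}: reduce the boundary value problem \eqref{non-viscous-eq}--\eqref{non-viscous-bd} to a single scalar first-order ODE together with a shooting (integral) constraint, and then read off both the number of solutions and their limits from the behaviour of one explicit integral. By \eqref{rho-velocity} the first equation gives $\rho u\equiv 1$, so $\rho=1/u$. Integrating the second and third equations of \eqref{non-viscous-eq} once produces constants $A,B$ with $u+p-\mu T^{\delta}u_{x}=A$ and $\Phi-\mu T^{\delta}u\,u_{x}=B$. Multiplying the first relation by $u$ and subtracting the second cancels the (identical) viscous terms entirely and expresses $p$ algebraically through $u$, namely $pu=(\gamma-1)(B-Au+\tfrac12 u^{2})$, so that $T=pu/R$ is a quadratic in $u$. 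Substituting back, the profile obeys the autonomous equation $\mu T^{\delta}(u)\,u_{x}=f(u)$, where $u\,f(u)=\tfrac{\gamma+1}{2}(u-u_{-})(u-u_{+})$ and $u_{\pm}$ are the two equilibria fixed by $A,B$. The entrance data $u(0)=q_{0}$, $p(0)=p_{0}$ impose one linear relation $B=B(A)$, so $A$ is the single shooting parameter; a direct computation shows $u=q_{0}$ is an equilibrium exactly when $A=A_{\mathrm{RH}}:=q_{0}+p_{0}$, at which $(u_{-},u_{+})=(q_{1},q_{0})$.

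Separating variables turns the exit condition $u(1)=q_{1}$ into $\mu\,\Theta(A)=1$, where $\Theta(A):=\int_{q_{1}}^{q_{0}}T^{\delta}(u)/\bigl(-f(u)\bigr)\,du$. A monotone profile from $q_{0}$ down to $q_{1}$ exists only when the equilibria bracket the interval, $u_{-}<q_{1}<q_{0}<u_{+}$. Evaluating $du_{\pm}/dA$ at $A_{\mathrm{RH}}$ shows this happens precisely for $A>A_{\mathrm{RH}}$, and this is exactly where the hypothesis \eqref{mu-gamma} enters: $M_{0}^{2}<2\gamma/(\gamma-1)$ is equivalent to $q_{1}/q_{0}>(\gamma-1)/\gamma$, which forces $u_{-}$ to be strictly decreasing in $A$ (while $u_{+}$ always increases), so that $u_{-}<q_{1}$ for $A>A_{\mathrm{RH}}$; one also checks $T>0$ on $[q_{1},q_{0}]$ throughout. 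Hence the admissible range is $A\in(A_{\mathrm{RH}},\infty)$.

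The heart of the argument is the behaviour of $\Theta$ at the two ends. As $A\to A_{\mathrm{RH}}^{+}$ the poles $u_{\pm}$ reach $q_{0},q_{1}$, giving a logarithmic divergence, so $\Theta\to+\infty$. As $A\to\infty$ one finds $u_{-}\to\frac{\gamma-1}{\gamma}q_{0}$, $u_{+}\sim\frac{2\gamma}{\gamma+1}A$ and $T(u)\sim\frac{\gamma-1}{R}A(q_{0}-u)$ on $[q_{1},q_{0}]$, so the integrand is of order $A^{\delta-1}$ and $\Theta(A)\sim C\,A^{\delta-1}$. This is the $\delta$-dichotomy: for $0<\delta\le 1$, $\Theta$ stays bounded (tending to $0$ or a constant) at $+\infty$, so for $1/\mu$ large the only root lies near $A_{\mathrm{RH}}$, where the singular asymptotics make $\Theta$ strictly monotone, yielding the unique $U_{vp}^{\mu}$ of part (i); for $\delta>1$, $\Theta\to+\infty$ at both ends, so once $1/\mu$ exceeds the interior minimum there are (at least) two roots $A_{1}(\mu)\to A_{\mathrm{RH}}$ and $A_{2}(\mu)\to\infty$, giving the two profiles of part (ii). I expect the main obstacle to sit precisely here: controlling $\Theta$ globally — its boundedness away from the endpoints and strict monotonicity near each endpoint — with enough uniformity to pin down the exact solution count and the thresholds $\mu_{1},\mu_{2}$. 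Existence, uniqueness and $C^{2}$ regularity of the profile for each admissible $A$ are then standard for the smooth ODE $\mu T^{\delta}u_{x}=f(u)$ with $T^{\delta}>0$.

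Finally I would identify the limits. For the branch $A\to A_{\mathrm{RH}}$ the profile is monotone with a shrinking transition layer, so $U^{\mu}_{vp}$ converges in $L^{1}$ to a step function of the form \eqref{shock-solution}; the layer position is the ratio of the two endpoint logarithmic weights, which a residue-type computation gives as $X_{svp}=\dfrac{q_{0}T_{0}^{\delta}}{q_{0}T_{0}^{\delta}+q_{1}T_{1}^{\delta}}=\bigl(1+\tfrac{q_{1}}{q_{0}}(T_{1}/T_{0})^{\delta}\bigr)^{-1}$, and inserting the Rankine--Hugoniot value $T_{1}/T_{0}=p_{1}q_{1}/(p_{0}q_{0})=\bigl(\tfrac{\gamma+1}{\gamma-1}-\tfrac{q_{1}}{q_{0}}\bigr)/\bigl(\tfrac{\gamma+1}{\gamma-1}-\tfrac{q_{0}}{q_{1}}\bigr)$ reproduces \eqref{shock_location_vp}; thus this branch converges to $\hat U^{0}_{vp}$ in \eqref{U_0vp}. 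For the branch $A_{2}(\mu)\to\infty$, the relation $p=A-u+\mu T^{\delta}u_{x}$ forces $p\to\infty$ in the interior where $u_{x}\approx 0$ and $u<q_{0}$, so the pressure is unbounded and $U_{vp2}^{\mu}$ cannot converge to any finite weak solution of \eqref{aa} — this is the non-convergent second solution. (Note this mechanism is absent in the barotropic case, where $T^{\delta}(u)$ is independent of the shooting constant and $\Theta$ is monotone, explaining why only one solution occurs there.)
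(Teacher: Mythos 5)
Your proposal follows, in substance, the paper's own route. Your shooting constant $A$ is an affine reparametrization of the paper's $\alpha_\mu$ (namely $A=P_0-\alpha_\mu$, with $A_{\mathrm{RH}}=P_0$ corresponding to $\alpha_\mu=0$); your algebraic elimination of $p$ is exactly \eqref{non-p-rep}; and your $\Theta(A)$ coincides, up to the constant $\left(\frac{\gamma-1}{R}\right)^{\delta}$, with the paper's $J(\alpha)=-\int_{q_1}^{q_0}(f_2+\alpha g_2)^{\delta}/(f_1+\alpha g_1)\,du$ arising from \eqref{vp-alpha}. Your endpoint asymptotics ($\Theta\to+\infty$ as $A\to A_{\mathrm{RH}}^{+}$; $\Theta\sim CA^{\delta-1}$ as $A\to\infty$) are the paper's \eqref{asym of J1}--\eqref{asym of J4}; your identification of \eqref{mu-gamma} with $q_1/q_0>(\gamma-1)/\gamma$, i.e.\ $g_1(q_1)>0$, matches the paper; and your residue-type location $\bigl(1+\frac{q_1}{q_0}(T_1/T_0)^{\delta}\bigr)^{-1}$ does reproduce \eqref{shock_location_vp}, since the Rankine--Hugoniot values $P_0=\frac{\gamma+1}{2\gamma}(q_0+q_1)$, $\Phi_0=\frac{\gamma+1}{2(\gamma-1)}q_0q_1$ give exactly your quotient for $T_1/T_0$, and the ratio $|f_1'(q_0)|/|f_1'(q_1)|=q_1/q_0$ supplies the remaining factor. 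The non-convergence of the second branch via unbounded pressure is also the paper's argument via \eqref{non-p-rep}.

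The genuine gap is the one you flag yourself: the solution count. You get ``at least two'' roots for $\delta>1$ from the intermediate value theorem, and for $0<\delta\le 1$ you assert uniqueness by localizing the root near $A_{\mathrm{RH}}$ where ``the singular asymptotics make $\Theta$ strictly monotone''; but neither the monotonicity near the endpoints nor the boundedness of $\Theta$ on compacta away from $A_{\mathrm{RH}}$ is proved, and your localization sketch does not cover $\delta=1$, where $\Theta\to C_0>0$ at infinity and no decay confines the root. The paper closes precisely this step by the sign computation \eqref{derivative of F} for $\partial_\alpha(1/F)$: the key quantity $g_2f_1-g_1f_2=\frac12u^2-q_0u+P_0q_0-\Phi_0$ factors as $\frac{\gamma+1}{2\gamma}\bigl(q_0-\frac{\gamma-1}{\gamma+1}q_1\bigr)\bigl(q_0-\frac{\gamma}{\gamma-1}q_1\bigr)<0$ exactly under \eqref{mu-gamma}, and for $0<\delta\le1$ the extra term $(\delta-1)g_2(f_1+\alpha g_1)$ has a favorable sign, so $J$ is \emph{globally} strictly increasing on $(-\infty,0)$ --- uniqueness then holds with no localization at all. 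For $\delta>1$ the paper proves the two-sided sign statement \eqref{derivative-J} ($J'>0$ on $(\alpha_m,0)$, $J'<0$ on $(-\infty,\alpha_M)$) by splitting the integral near $u=q_0$ and on the interior, and obtains \emph{exactly} two roots once $\mu^{-1}(R/(\gamma-1))^{\delta}>\max_{[\alpha_M,\alpha_m]}J$. Without some version of this derivative analysis your counting claims are unsupported. One further small slip: on the divergent branch your heuristic ``$u_x\approx0$ in the interior'' is false; since $\mu\sim cA^{1-\delta}$ balances $T^{\delta}\sim\bigl(cA(q_0-u)\bigr)^{\delta}$ against $f_1+\alpha g_1\sim\alpha g_1$, one finds $\partial_xu=O(1)$ there, so this branch has no sharp layer --- the pressure blow-up should instead be read off directly from your algebraic relation $pu=(\gamma-1)\bigl(B-Au+\tfrac12u^2\bigr)$, i.e.\ from \eqref{non-p-rep}, which is what the paper does. (Your $L^1$-convergence step is only sketched, but the paper likewise omits it, citing the arguments of Sections \ref{Section 3.2} and \ref{Section 4.1}, so no complaint there.)
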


\begin{rem}
	For polytropic gases, an interesting thing is that the uniqueness of the solutions will be effected by the value of $\delta$. As $\delta \leq 1$, the uniqueness holds and the viscous shock solutions converge to an inviscid shock solution. 
	However, as $\delta > 1$, for each small $\mu$, there exist two viscous shock solutions. One of the solutions converges to a shock solution as $ \mu\sTo 0 $, while the other does not converge to a weak solution of the Euler system \eqref{aa}. 
	Thus, there arises another interesting problem on the uniqueness of the viscous shock solutions to the boundary value problem  (\ref{non-viscous-eq})-(\ref{non-viscous-bd}) as $\delta > 1$, which needs further investigation. 
\end{rem}


\begin{rem}
	Theorem \ref{thm: non-viscous} indicates that the position of the shock front for the inviscid shock solution, which is the very limit of the viscous shock solutions as the coefficient of the temperature-depending viscosity $ \mu $ goes to $ 0 $, strongly depends on the value of the parameter $ \delta $ in \eqref{vis_tem_function}.
	Hence, there are more than one shock solutions among \eqref{shock-solution} for the inviscid Euler system \eqref{aa} can be physical solutions in the sense of being the limit solution as the viscosity vanishes.
	Moreover, since $ \delta $ is a parameter beyond the ones for inviscid flows, it seems impossible to figure out a physical criterion for the steady 1-D Euler system which uniquely determines the shock solution.
\end{rem}

Before finishing this section, we give a note on notations in this paper.
\vskip 5pt

\noindent NOTE: For simplicity of the notations, we will drop the subscript of $*_{hb},*_{hp},*_{vb},*_{vp}$ as there is no confusion taking place in the argument hereafter, since the notations of each case are independent.

\section{Asymptotic Behaviors as the Heat Conductivity Vanishes}

This section is devoted to analyze the asymptotic behaviors of the steady shock solutions for fluids with heat conduction.
We are going to follow the techniques developed in \cite{FangZhao2021CPAA} to prove Theorem \ref{thm: isen-heat} for barotropic gases and Theorem \ref{thm: non-heat} for polytropic gases, respectively.

\subsection{The barotropic gases}\label{Section 3.1}

In this subsection, we firstly investigate the case of barotropic gases and deal with the problem [{\bf HB}].
Following the ideas in \cite{FangZhao2021CPAA}, the boundary value problem \eqref{isen-heat-eq} and \eqref{isen-heat-bd} will be reformulated as a problem for an ordinary equation of first order with an unknown parameter, which is studied carefully to lead us to Theorem \ref{thm: isen-heat}.


\subsubsection{Reformulation of the problem [{\bf HB}]} \label{Section 3.1.1}


Let $U^{\epsilon}(x):=(u^{\epsilon}(x),\rho^{\epsilon}(x),p^{\epsilon}(x))$, with $ p^{\epsilon}(x)=(\rho^{\epsilon}(x))^{\gamma} $, be a solution to the system \eqref{isen-heat-eq}, then it holds that, by the assumption \eqref{rho-velocity},
\begin{equation}\label{density-u}
	\rho^{\epsilon}=\frac{1}{u^{\epsilon}}.
\end{equation}
Substituting (\ref{density-u}) into the second equation in the system $(\ref{isen-heat-eq})$, one obtains that
\begin{equation}\label{hb-second-order}
	\partial_{x}\left(\frac{1}{2}(u^{\epsilon})^{2}+\frac{\gamma}{\gamma-1}\frac{1}{(u^{\epsilon})^{\gamma-1}}\right)=\frac{\epsilon}{\gamma-1}\partial_{xx}\left(\frac{1}{(u^{\epsilon})^{\gamma-1}}\right).
\end{equation}
Denoted by $\displaystyle \kappa:=\frac{\epsilon}{\gamma-1}$, $\displaystyle v^{\kappa}:=\displaystyle\frac{1}{(u^{\epsilon})^{\gamma-1}}$, the equation \eqref{hb-second-order} can be rewritten as
\begin{equation}\label{second-order}
	\partial_{x}g(v^{\kappa})=\kappa \partial_{xx}v^{\kappa},
\end{equation}
where
\begin{equation}\label{hb-g}
	g(v):=\frac{1}{2}v^{\frac{2}{1-\gamma}}+\frac{\gamma}{\gamma-1}v.
\end{equation}
Moreover, the boundary conditions \eqref{isen-heat-bd} is rewritten as 
\begin{equation}\label{second-order-b.c.}
v^{\kappa}(0)=v_{0},\qquad v^{\kappa}(1)=v_{1},
\end{equation}
where $v_{0}=q_{0}^{1-\gamma}$, and $v_{1}=q_{1}^{1-\gamma}$.
Note that, by the Rankine-Hugoniot conditions \eqref{R-H_heat}, it holds that
\begin{equation}\label{hb-g-bd}
	g(v_{0})=g(v_{1}).
\end{equation}
Thus, the BVP \eqref{isen-heat-eq} and \eqref{isen-heat-bd} is reduced to a boundary value problem of a nonlinear ordinary differential equation \eqref{second-order} of second order with the boundary conditions \eqref{second-order-b.c.} satisfying \eqref{hb-g-bd}.


Let $f(v)\defs g(v)-g(v_{0})$. Then it is a smooth, strictly convex function defined in $(0,\infty)$, and satisfies
\begin{equation}
f(v_{0})=f(v_{1})=0.
\end{equation}
Thus, it holds that
\begin{equation}\label{sign-f}
f(v)<0, \qquad \text{for} \quad \text{any} \quad v \in (v_{0},v_{1}),
\end{equation}
and there exists a unique $\displaystyle v_{*}:=\gamma^{\frac{1-\gamma}{1+\gamma}} \in (v_{0},v_{1}) $ such that $f'(v_{*})=0$. 
Obviously, one has
\begin{equation}\label{bg_inf_f}
	\inf_{v_{0}<v<v_{1}} f(v) = f(v_{*}) <0.
\end{equation}
Integrating (\ref{second-order}) over the interval $(0,x)$, one obtains
\begin{equation}\label{first-order}
	\kappa \partial_{x}v^{\kappa}=F(v^{\kappa};\alpha_{\kappa}),
\end{equation}
where $F(v;\alpha)\defs f(v)+\alpha$, and $\alpha_{\kappa}\defs\kappa \partial_{x}v^{\kappa}(0)$ is an unknown constant which needs to be determined together with $v^{\kappa}$ under the boundary conditions (\ref{second-order-b.c.}). 
That is, the BVP \eqref{second-order} and \eqref{second-order-b.c.} is further reduced to a boundary value problem of first order ordinary differential equation \eqref{first-order} and the boundary conditions \eqref{second-order-b.c.} for unknowns $ (v^{\kappa};\alpha_{\kappa}) $.

Moreover, the equation (\ref{first-order}) with heat conductivity formally converges to
\begin{equation}\label{weak-sol1}
F(v^{0}(x),\alpha_{0})=0
\end{equation}
as $\kappa \to 0$. The weak solution $(v^{0},\alpha_{0})$ to \eqref{weak-sol1} is defined in the sense that, for any test functions $\phi(x)\in C^{\infty}[0,1]$, it holds that
\begin{equation}\label{isen-weak-sol}
	\int_{0}^{1}F(v^{0}(x),\alpha_{0})\phi(x)dx = 0.
\end{equation}
Obviously, for any $ 0 < x_{s} <1 $, the weak solutions $(v^{0}(x;x_{s}), \alpha_{0})$ associating with the shock solutions \eqref{shock-solution_heat} satisfy that $ \alpha_{0} = 0 $ and
\begin{equation}\label{shock-solution_hb}
v^{0}(x;x_{s}):=
\left\{
\begin{aligned}
v_{0}, & \quad 0\leq x < x_{s},  \\
v_{1}, & \quad x_{s} < x \leq 1.
\end{aligned}
\right.
\end{equation}

Thus, the singular limit problem [{\bf HB}] is reformulated as the following problem.

\vskip 5px

\underline{Problem [{\bf HB-R}]}:

\begin{quotation}
	
	Let $\kappa>0$. Try to find a solution $(v^{\kappa},\alpha_{\kappa})$ satisfying the equation (\ref{first-order}) and the boundary conditions (\ref{second-order-b.c.}).
	
	Furthermore, find shock solutions, among weak solutions $(v^{0}(x;x_{s}), \alpha_{0})$ to the system \eqref{weak-sol1}, which could be the limit of $(v^{\kappa},\alpha_{\kappa})$ ( or its subsequences ) as $\kappa\sTo 0$. \qed
\end{quotation}

\vskip 5px


Therefore the aim of this subsection is to solve the reformulated problem [{\bf HB-R}]. 
The existence of the solutions $(v^{\kappa},\alpha_{\kappa})$ for any $\kappa>0$ will be established, while it will be proved that, as $\kappa\sTo 0$, they do not converge to any shock solution among  weak solutions $(v^{0}(x;x_{s}), \alpha_{0})$ to the system \eqref{weak-sol1}.


\subsubsection{Existence of the solution $(v^{\kappa},\alpha_{\kappa})$ to (\ref{first-order}) and  (\ref{second-order-b.c.})}\label{Section 3.1.2}

Motivated by the techniques in \cite{FangZhao2021CPAA}, the existence of the solution $(v^{\kappa},\alpha_{\kappa})$ to (\ref{first-order}) and  (\ref{second-order-b.c.}) will be established by solving the problem for the inverse function of $ v^{\kappa} $. 
Then the following lemma asserting the monotonicity of $ v^{\kappa} $ is needed to guarantee the existence of its inverse function.

\begin{lem}\label{isen-heat-prior-hopf}
	Let $\kappa>0$ and $v^{\kappa}\in C^{2}(0,1)\bigcap C^{1}[0,1]$ be a solution to the boundary value problem (\ref{second-order}) with the boundary condition (\ref{second-order-b.c.}). Then it holds that
	\begin{equation}\label{epsilon-isen-prior}
	v_{0}<v^{\kappa}(x)<v_{1},\qquad for \quad any \quad x\in (0,1).
	\end{equation}
	Moreover, there holds
	\begin{equation}\label{epsilon-isen-hopf}
	\partial_{x}v^{\kappa}(x)>0, \quad \text{for}\quad \text{any} \quad x\in [0,1].
	\end{equation}
\end{lem}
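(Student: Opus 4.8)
The plan is to reduce the second-order boundary value problem to a first-order one and then read off both conclusions from a linear ODE satisfied by the derivative. Integrating \eqref{second-order} once over $[0,x]$ (legitimate since $v^{\kappa}\in C^{2}(0,1)\cap C^{1}[0,1]$) produces exactly the first-order relation \eqref{first-order}, namely $\kappa\,\partial_x v^{\kappa}=f(v^{\kappa})+\alpha_{\kappa}$ with $\alpha_{\kappa}=\kappa\,\partial_x v^{\kappa}(0)$, valid on all of $[0,1]$. Since $v^{\kappa}$ takes values in $(0,\infty)$, where $f$ is smooth, the right-hand side is $C^{1}$ along the solution, and this autonomous structure is what I would exploit. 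I expect the heart of the lemma to be the strict positivity \eqref{epsilon-isen-hopf}; once that is in hand the pointwise bounds \eqref{epsilon-isen-prior} are immediate, because strict monotonicity together with $v^{\kappa}(0)=v_{0}$ and $v^{\kappa}(1)=v_{1}$ forces $v_{0}<v^{\kappa}(x)<v_{1}$ on $(0,1)$. (Alternatively one could first obtain \eqref{epsilon-isen-prior} from the weak and strong maximum principles for the operator $L[v]=\kappa\,\partial_{xx}v-g'(v)\,\partial_x v$, which carries no zeroth-order term, and then get positivity of $\partial_x v^{\kappa}$ at the two endpoints from Hopf's lemma; but the interior positivity still needs the argument below.)

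For the monotonicity I would set $w\defs\partial_x v^{\kappa}$ and differentiate $\kappa\,\partial_x v^{\kappa}=f(v^{\kappa})+\alpha_{\kappa}$ to obtain the linear homogeneous first-order equation $\kappa\,\partial_x w=f'(v^{\kappa})\,w$ on $[0,1]$. Because $f'(v^{\kappa}(\cdot))$ is continuous, this integrates explicitly to $w(x)=w(0)\exp\!\big(\tfrac1\kappa\int_0^x f'(v^{\kappa}(t))\,dt\big)$, so $w$ either vanishes identically or never vanishes. The first alternative would make $v^{\kappa}$ constant, contradicting $v_{0}\neq v_{1}$; hence $w(0)=\partial_x v^{\kappa}(0)\neq0$ and $w$ keeps a fixed sign throughout the closed interval. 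Comparing the boundary values $v^{\kappa}(1)=v_{1}>v_{0}=v^{\kappa}(0)$ rules out the decreasing case and pins the sign to be positive, which gives $\partial_x v^{\kappa}(x)>0$ for every $x\in[0,1]$, endpoints included.

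The one point to argue with care — and what I regard as the \emph{main obstacle} — is excluding an interior zero of $\partial_x v^{\kappa}$. A direct maximum principle for $w$ is awkward: differentiating $L[v]=0$ creates the zeroth-order coefficient $-g''(v)\,\partial_x v$, whose sign is precisely the quantity under investigation, so that route is circular. The reduction to the scalar linear equation $\kappa\,\partial_x w=f'(v^{\kappa})\,w$ (equivalently, the uniqueness statement that a solution of the autonomous equation $\kappa\,\partial_x y=f(y)+\alpha_{\kappa}$ cannot touch an equilibrium without being constant) is exactly what removes this difficulty, and it simultaneously yields positivity up to and including the boundary, as \eqref{epsilon-isen-hopf} demands. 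With \eqref{epsilon-isen-hopf} established, \eqref{epsilon-isen-prior} follows at once from strict monotonicity and the prescribed boundary data.
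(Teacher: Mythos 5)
Your proof is correct, but it follows a genuinely different route from the paper's. The paper first establishes the bounds \eqref{epsilon-isen-prior} by the maximum principle for the second-order quasilinear equation \eqref{second-order} (written as $\kappa v''-g'(v^{\kappa})v'=0$, which has no zeroth-order term), then obtains $\partial_x v^{\kappa}>0$ at $x=0,1$ from Hopf's lemma, and finally handles interior points by re-applying Hopf's lemma on subintervals $[0,a]$ with boundary value $v^{\kappa}(a)=v_a\in(v_0,v_1)$ — a step that is stated rather tersely there. You instead invert the logical order: you prove \eqref{epsilon-isen-hopf} first, by observing that $w=\partial_x v^{\kappa}$ satisfies the linear homogeneous equation $\kappa w'=f'(v^{\kappa})w$ (which in fact is just \eqref{second-order} itself, so the detour through integrating to \eqref{first-order} and differentiating back is harmless but unnecessary), so that the explicit exponential formula forces $w$ to be identically zero or nowhere zero; the boundary data $v_1>v_0$ then pin the sign, and \eqref{epsilon-isen-prior} drops out as a corollary of strict monotonicity. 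Equivalently, as you note, this is the Picard--Lindel\"of uniqueness statement that a solution of the autonomous equation $\kappa y'=f(y)+\alpha_{\kappa}$ cannot touch an equilibrium without being constant — the same mechanism underlying the paper's Appendix Lemmas \ref{lemma1}--\ref{lemma2}. Your argument is more elementary and self-contained (no elliptic maximum principle or Hopf machinery), rigorously covers interior points and endpoints in one stroke, and correctly identifies why a naive maximum principle for $w$ would be circular; the paper's approach, by contrast, is the one that survives in settings (e.g.\ multi-dimensional analogues) where the reduction to a scalar autonomous first-order ODE is unavailable. Two small points to make fully precise: justify the passage of \eqref{first-order} to the closed interval by integrating over $[\epsilon,x]$ and letting $\epsilon\to0^+$ using $v^{\kappa}\in C^{1}[0,1]$, and note that the equation then bootstraps $v^{\kappa}$ to $C^{2}[0,1]$, so the linear ODE for $w$ indeed holds up to the endpoints.
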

\begin{proof}
	\eqref{epsilon-isen-prior} can be obtained by applying the maximum principles for elliptic equations of second order. On the other hand, due to Hopf lemma, it is obvious that the first order derivative of $v^{\kappa}$ satisfies
	\begin{equation*}
	\partial_{x}v^{\kappa}(0)>0, \quad 		\partial_{x}v^{\kappa}(1)>0.
	\end{equation*}
	Similarly, for any interval $[0,a]$, $a\in(0,1)$, the Hopf lemma holds with some given value $v^{\kappa}(a)=v_{a}$ satisfying $v_{0}<v_{a}<v_{1}$. Therefore for any internal point $x\in(0,1)$, we have $\partial_{x}v^{\kappa}(x)>0$ and \eqref{epsilon-isen-hopf} is proved.
\end{proof}

By Lemma \ref{isen-heat-prior-hopf}, $v^{\kappa}$ is a strictly increasing function of $x$, so the inverse function exists, which will be denoted by $x=X_{\kappa}(v)$ for $v\in (v_{0},v_{1})$. 
Then, $X_{\kappa}(v)$ satisfies the following problem
\begin{equation}\label{inverse}
	\left\{
	\begin{aligned}
		 & \frac{dX_{\kappa}(v)}{dv}=  \frac{\kappa}{F(v;\alpha_{\kappa})} , \\
		 & X_{\kappa}(v_{0})=0, \quad  X_{\kappa}(v_{1})=1.
	\end{aligned}
	\right.
\end{equation}
Obviously, solving the boundary value problem (\ref{first-order}) and (\ref{second-order-b.c.}) is equivalent to solving problem (\ref{inverse}). 

It can be easily seen that the solution to \eqref{inverse} can be written as:
\begin{equation}
	X_{\kappa}(v)=\int_{v_{0}}^{v}\frac{\kappa}{F(w;\alpha_{\kappa})}dw,
\end{equation}
with the unknown constant $\alpha_{\kappa}$ being determined by
\begin{equation}\label{hb_alpha}
	1=\int_{v_{0}}^{v_{1}}\frac{\kappa}{F(v;\alpha_{\kappa})}dv.
\end{equation}
Therefore, it suffices to show the existence of the solution $ \alpha_{\kappa} $ to the equation \eqref{hb_alpha} to establish the existence of the solution to the problem \eqref{inverse}, which is the consequence of the following lemma.


\begin{lem}\label{exist-alpha-isen}
	For any $\kappa>0$, there exists a unique constant $\alpha_{\kappa}>0$ such that
	\begin{equation}
		H_{\kappa}(\alpha_{\kappa})=1,
	\end{equation}
	where
	\begin{equation}
		H_{\kappa}(\alpha)=\int_{v_{0}}^{v_{1}}\frac{\kappa}{F(w,\alpha)}dw,\qquad \alpha>0.
	\end{equation}
\end{lem}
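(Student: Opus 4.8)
The plan is to analyze $H_\kappa$ as a function of $\alpha$ on the natural domain where the integral converges with a positive integrand, and then apply the intermediate value theorem. First I would identify that domain. By \eqref{bg_inf_f}, $f$ attains its minimum $f(v_{*})<0$ on $[v_{0},v_{1}]$, so $F(w,\alpha)=f(w)+\alpha$ is positive throughout $[v_{0},v_{1}]$ precisely when $\alpha>-f(v_{*})=:\alpha_{*}$; note $\alpha_{*}>0$. For $0<\alpha\le\alpha_{*}$ the denominator vanishes at $v_{*}$ and $H_\kappa$ is not a convergent integral with positive integrand, so I would restrict attention to $\alpha\in(\alpha_{*},\infty)$. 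In particular, any root of $H_\kappa(\alpha)=1$ found there automatically satisfies $\alpha_\kappa>\alpha_{*}>0$, which gives the asserted positivity.

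Next I would show that $H_\kappa$ is continuous and strictly decreasing on $(\alpha_{*},\infty)$. Differentiating under the integral sign---justified on each compact subinterval $[\alpha_{*}+\eta,\infty)$, where the integrand and its $\alpha$-derivative are uniformly bounded---yields
\[
H_\kappa'(\alpha)=-\int_{v_{0}}^{v_{1}}\frac{\kappa}{(f(w)+\alpha)^{2}}\,dw<0,
\]
which gives strict monotonicity and, a fortiori, continuity.

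Then I would compute the two boundary limits. As $\alpha\to+\infty$, the estimate $0<\kappa/(f(w)+\alpha)\le \kappa/(\alpha-\alpha_{*})$ gives $H_\kappa(\alpha)\le \kappa(v_{1}-v_{0})/(\alpha-\alpha_{*})\to 0$. As $\alpha\downarrow\alpha_{*}$, the integrand increases monotonically to $\kappa/(f(w)-f(v_{*}))$, so by the monotone convergence theorem $H_\kappa(\alpha)\to\int_{v_{0}}^{v_{1}}\kappa/(f(w)-f(v_{*}))\,dw$; this limiting integral diverges because $f'(v_{*})=0$ together with the strict convexity of $f$ (so $f''(v_{*})>0$) forces $f(w)-f(v_{*})\sim\tfrac12 f''(v_{*})(w-v_{*})^{2}$ near $v_{*}$, whence the integrand behaves like $C\,(w-v_{*})^{-2}$, which is non-integrable. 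Hence $H_\kappa(\alpha)\to+\infty$ as $\alpha\downarrow\alpha_{*}$.

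Combining these facts, $H_\kappa$ is a continuous, strictly decreasing bijection from $(\alpha_{*},\infty)$ onto $(0,+\infty)$, so there is exactly one $\alpha_\kappa\in(\alpha_{*},\infty)$ with $H_\kappa(\alpha_\kappa)=1$, and $\alpha_\kappa>\alpha_{*}>0$ as required. The main obstacle is the divergence as $\alpha\downarrow\alpha_{*}$: this is precisely where the strict convexity of $f$ is essential, since it is the second-order vanishing of the denominator at the interior minimum $v_{*}$ that produces the non-integrable singularity and thereby forces $H_\kappa$ to sweep through the value $1$. The remaining ingredients are routine monotonicity and continuity arguments.
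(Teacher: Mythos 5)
Your proof is correct, and its skeleton (monotonicity of $H_{\kappa}$, the limit $H_{\kappa}(\alpha)\to 0$ as $\alpha\to+\infty$, and an intermediate-value argument) matches the paper's; the genuine difference lies in how you guarantee that $H_{\kappa}$ exceeds the value $1$ near $\alpha_{*}:=-f(v_{*})$. You prove the sharp statement $H_{\kappa}(\alpha)\to+\infty$ as $\alpha\downarrow\alpha_{*}$ via monotone convergence and the second-order vanishing of $f(w)-f(v_{*})$ at the interior critical point $v_{*}$, so that $H_{\kappa}$ is a decreasing bijection of $(\alpha_{*},\infty)$ onto $(0,+\infty)$. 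The paper instead compares $f$ with the piecewise-linear chord function $\hat{L}\geq f$ and obtains the explicit lower bound $H_{\kappa}(\alpha)\geq -\frac{\kappa}{s_{2}}\ln\left(1+\frac{f(v_{*})}{\alpha}\right)$, hence a concrete window $-f(v_{*})<\hat{\alpha}_{\kappa}<\frac{-f(v_{*})}{1-e^{-s_{2}/\kappa}}$ in which $H_{\kappa}(\hat{\alpha}_{\kappa})>1$. Your route is cleaner as a standalone proof and self-justifies the restriction to $\alpha>\alpha_{*}$ (for $\alpha\leq\alpha_{*}$ the integral is infinite or undefined), whereas the paper imports that restriction a priori from the ODE solution via \eqref{bg_inf_alpha}; on the other hand, the paper's quantitative bracket $\hat{\alpha}_{\kappa}<\alpha_{\kappa}$ is reused verbatim later to prove the limit \eqref{limit-alpha-isen-heat}, so its cruder comparison buys the subsequent asymptotics for free. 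One small repair to your write-up: strict convexity alone does not force $f''(v_{*})>0$ (consider $w^{4}$ at the origin); either compute directly from \eqref{hb-g} that $f''(v)=\frac{1+\gamma}{(1-\gamma)^{2}}\,v^{\frac{2}{1-\gamma}-2}>0$ on $(0,\infty)$, or observe that smoothness alone gives the one-sided Taylor bound $f(w)-f(v_{*})\leq C(w-v_{*})^{2}$ near $v_{*}$, which already forces the integrand to dominate $c\,(w-v_{*})^{-2}$ and yields the divergence without any two-sided asymptotic equivalence.
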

\begin{proof}

	The equation \eqref{first-order} as well as the estimates \eqref{epsilon-isen-prior} and \eqref{epsilon-isen-hopf} yields that
	\begin{equation}\label{bg_inf_alpha}
		\alpha_{\kappa}\geq -\inf_{v_{0}<v<v_{1}} f(v) = -f(v_{*})>0.
	\end{equation}
	Then it is obvious that  $H_{\kappa}(\alpha)$ is a decreasing continuous function with respect to $\alpha\geq -f(v_{*})$. 
	Since
	\begin{equation}\label{eq2.2.1}
		\lim_{\alpha \rightarrow  +\infty}H_{\kappa}(\alpha)=0,
	\end{equation}
	it suffices to show that there exists a constant $\hat{\alpha}>0$ such that
	\begin{equation}
		H_{\kappa}(\hat{\alpha})>1.
	\end{equation}
	Let $\hat{v}=v_{*}=\gamma^{\frac{1-\gamma}{1+\gamma}}$, and
	\[
		\hat{L}(v)=
		\begin{cases}-s_{1}(v-v_{0}),\quad \ \  &
             v_{0}\leq v\leq \hat{v},                              \\
             s_{2}(v-v_{1}), \quad \ \  & \hat{v}\leq v\leq v_{1},
		\end{cases}
	\]
	where
	\begin{equation}
		s_{1}:=-\frac{f(\hat{v})}{\hat{v}-v_{0}}>0, \quad  s_{2}:=\frac{f(\hat{v})}{\hat{v}-v_{1}} >0.
	\end{equation}
	Obviously, $\hat{L}(v)\geq f(v)$. For $\alpha > -f(\hat{v})$, we have
	\begin{equation}
		\begin{aligned}
			H_{\kappa}(\alpha) & = \int_{v_{0}}^{v_{1}}\frac{\kappa}{f(v)+\alpha}dv                                                                        \\
			                   & \ge \int_{v_{0}}^{v_{1}}\frac{\kappa}{\hat{L}(v)+\alpha}dv                                                                \\
			                   & =\int_{v_{0}}^{\hat{v}}\frac{\kappa}{-s_{1}(v-v_{0})+\alpha}+\int_{\hat{v}}^{v_{1}}\frac{\kappa}{s_{2}(v-v_{1})+\alpha}dv \\
			                   & \ge \int_{\hat{v}}^{v_{1}}\frac{\kappa}{s_{2}(v-v_{1})+\alpha}dv                                                          \\
			                   & =-\frac{\kappa}{s_{2}}\ln\left(1+\frac{f(\hat{v})}{\alpha}\right).
		\end{aligned}
	\end{equation}
	Hence one can choose $\displaystyle -f(\hat{v})< \hat{\alpha}_{\kappa} < \frac{-f(\hat{v})}{1-e^{-\frac{s_{2}}{\kappa}}}$ so that $H_{\kappa}(\hat{\alpha}_{\kappa})>1$.
	By \eqref{eq2.2.1} and the monotonicity of $H_{\kappa}(\alpha)$, there exists a unique constant $\alpha_{\kappa} > \hat{\alpha}_{\kappa}$ such that $H_{\kappa}(\alpha_{\kappa})=1$, which completes the proof.
\end{proof}

By Lemma \ref{exist-alpha-isen}, one immediately obtains the following lemma.
\begin{lem}\label{lem:hb-exist-solu}
	Let $\kappa>0$. There exists a unique solution $(v^{\kappa},\alpha_{\kappa})$, with $ \alpha_{\kappa} > -f(v_{*}) $, satisfying the equation (\ref{first-order}) and the boundary conditions (\ref{second-order-b.c.}).
	
\end{lem}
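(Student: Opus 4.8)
The plan is to treat this lemma as essentially a corollary of Lemma \ref{exist-alpha-isen}, assembling the solution $(v^{\kappa},\alpha_{\kappa})$ out of the unique parameter $\alpha_{\kappa}$ produced there, by running backwards the equivalence between the boundary value problem \eqref{first-order}--\eqref{second-order-b.c.} and the inverse-function problem \eqref{inverse} that was already recorded above. First I would fix $\kappa>0$ and invoke Lemma \ref{exist-alpha-isen} to obtain the unique $\alpha_{\kappa}$ solving $H_{\kappa}(\alpha_{\kappa})=1$; the construction in the proof of that lemma in fact yields the sharper strict bound $\alpha_{\kappa}>\hat{\alpha}_{\kappa}>-f(v_{*})$, which is exactly the lower bound claimed here and is what makes the rest of the argument go through.

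The key structural observation is that this strict lower bound makes the quadrature nondegenerate. Since $v_{*}$ minimizes $f$ on $(v_{0},v_{1})$ by \eqref{bg_inf_f}, and $\alpha_{\kappa}>-f(v_{*})$, we obtain $F(v;\alpha_{\kappa})=f(v)+\alpha_{\kappa}\ge f(v_{*})+\alpha_{\kappa}>0$ uniformly on the \emph{closed} interval $[v_{0},v_{1}]$, endpoints included. Hence the integrand $\kappa/F(\,\cdot\,;\alpha_{\kappa})$ is smooth and strictly positive, and I would define $X_{\kappa}(v):=\int_{v_{0}}^{v}\kappa/F(w;\alpha_{\kappa})\,dw$. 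This $X_{\kappa}$ is a strictly increasing, smooth function with $X_{\kappa}(v_{0})=0$ and, by the defining relation \eqref{hb_alpha}, $X_{\kappa}(v_{1})=H_{\kappa}(\alpha_{\kappa})=1$; inverting it produces $v^{\kappa}:=X_{\kappa}^{-1}\in C^{2}[0,1]$, strictly increasing, taking the correct boundary values \eqref{second-order-b.c.}, and satisfying \eqref{first-order} by differentiating the inversion relation.

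For uniqueness I would argue in the reverse direction: any solution $v^{\kappa}$ to \eqref{first-order}--\eqref{second-order-b.c.} is strictly increasing by Lemma \ref{isen-heat-prior-hopf}, so it admits a $C^{2}$ inverse that necessarily solves \eqref{inverse}; evaluating that inverse at $v_{1}$ forces its parameter to satisfy $H_{\kappa}(\cdot)=1$, whose root is unique by Lemma \ref{exist-alpha-isen}, and once $\alpha_{\kappa}$ is pinned down the quadrature determines $X_{\kappa}$, hence $v^{\kappa}$, uniquely. I do not expect a genuine obstacle here, as the analytic work was already carried out in Lemma \ref{exist-alpha-isen}; the only point demanding real care is the uniform positivity of $F(\,\cdot\,;\alpha_{\kappa})$ on the closed interval, which is precisely what rules out the degenerate value $\alpha_{\kappa}=-f(v_{*})$ (where the integral would diverge) and guarantees that \eqref{inverse} is a proper quadrature and $X_{\kappa}$ a genuine $C^{2}$ diffeomorphism rather than merely a monotone homeomorphism.
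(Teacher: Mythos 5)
Your proposal is correct and follows essentially the same route as the paper: the paper deduces this lemma immediately from Lemma \ref{exist-alpha-isen} via the already-established equivalence between \eqref{first-order}--\eqref{second-order-b.c.} and the inverse-function problem \eqref{inverse}, with monotonicity supplied by Lemma \ref{isen-heat-prior-hopf}. You have merely written out the details (the quadrature for $X_{\kappa}$, the strict positivity of $F(\,\cdot\,;\alpha_{\kappa})$ on $[v_{0},v_{1}]$, and the reverse direction for uniqueness) that the paper leaves implicit in its one-line proof.
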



\subsubsection{Asymptotic behaviors of $(v^{\kappa},\alpha_{\kappa})$ as $ \kappa\sTo 0+ $}\label{Section 3.1.3}


Now we continue to investigate the asymptotic behaviors of the solutions $(v^{\kappa},\alpha_{\kappa})$ to the problem (\ref{first-order}) and  (\ref{second-order-b.c.}) as the parameter $ \kappa $ vanishes, checking that whether they will converge to a shock solution among \eqref{shock-solution_hb} or not.

By the estimate \eqref{epsilon-isen-prior}, one obtains that
\begin{equation}
	T.V.v^{\kappa}=v_{1}-v_{0},
\end{equation}
where $T.V.v^{\kappa}$ stands for the total variation of $v^{\kappa}$. 
Then by Helly's theorem, there exists a subsequence $\{\kappa_{n}\}$ and a function $v^{0}$ of bounded variation, such that $\kappa_{n}\sTo0$ as $ n\sTo\infty $, and for a.e. $ x\in[0,1] $,
\begin{equation}
	v^{\kappa_{n}}(x)\to v^{0}(x),\qquad \text{as} \quad n \to \infty.
\end{equation}
Then, one needs to check whether the limit function $v^{0}$ is a transonic shock solution among \eqref{shock-solution_hb} and whether it is the unique limit of $v^{\kappa}$. 
Following the ideas in \cite{FangZhao2021CPAA}, the argument will be divided into the following three steps.
\begin{enumerate}
	\item  Calculate the limit $\displaystyle \alpha_0= \lim_{\kappa \to 0}\alpha_{\kappa}$.
	\item  Prove that $v^0$ satisfies 
	\begin{equation*}
	F(v^{0};\alpha_{0})=0
	\end{equation*}
	 in the weak sense \eqref{isen-weak-sol}. This shows that what values $v^0$ can take.
	 
	 If $v^{0}$ only takes values in the set $\{v_{0},v_{1}\}$, namely there exists a subset $A\subset [0,1]$ and $A^{c}=[0,1]\backslash A$, such that
	 \begin{equation*}
	 	v^{0}(x)=\left\{
	 	\begin{aligned}
	 		 &v_{0}, \quad x\in A, \\
	 		 &v_{1},  \quad x\in A^{c} .
	 	\end{aligned}
	 	\right.
	 \end{equation*}
 	then we continue the next step (iii) to identify which one among \eqref{shock-solution_hb} the function $v^{0}$ can be. 
 	
 	Otherwise, the limit function $v^{0}$ cannot be any shock solution among \eqref{shock-solution_hb}. Therefore, the solutions of the problem \eqref{first-order} and  (\ref{second-order-b.c.}) cannot converge to any shock solution among \eqref{shock-solution_heat} as the parameter $ \kappa $ vanishes.
	\item  Calculate the limit of $X_\kappa$ as $ \kappa\sTo0 $ to check whether the uniqueness of $v^0$ is valid. 
\end{enumerate}

It finally turns out that it is not true that $v^{0}$ only takes values in the set $\{v_{0},v_{1}\}$.
Hence, we only carry out the first two steps, which is sufficient to show that the solutions of the problem \eqref{first-order} and  (\ref{second-order-b.c.}) will not converge to any shock solution among \eqref{shock-solution_heat} as $ \kappa\sTo0 $.

For (i), we have the following lemma:
\begin{lem}
	It holds that
	\begin{equation}\label{limit-alpha-isen-heat}
		\lim_{\kappa \to 0^{+}}\alpha_{\kappa}=-f(v_{*}),
	\end{equation}
	where
	$v_{*}=\gamma^{\frac{1-\gamma}{1+\gamma}}$.
\end{lem}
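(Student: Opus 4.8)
The plan is to squeeze $\alpha_\kappa$ between $-f(v_*)$ from below, which is already on hand, and $-f(v_*)$ from above via a short contradiction argument driven by the defining relation $H_\kappa(\alpha_\kappa)=1$. The underlying mechanism is that the value $1$ of the integral $H_\kappa(\alpha_\kappa)$ can be sustained as $\kappa\to 0^+$ only if the integrand develops a singularity, and the sole candidate is the vanishing of the denominator $f(w)+\alpha_\kappa$ at the minimum point $w=v_*$ of $f$; this forces $\alpha_\kappa+f(v_*)\to 0$.

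First I would record the lower bound, which requires no new work: estimate \eqref{bg_inf_alpha} established in the proof of Lemma \ref{exist-alpha-isen} gives $\alpha_\kappa\geq-\inf_{v_0<v<v_1}f(v)=-f(v_*)$ for every $\kappa>0$, whence $\liminf_{\kappa\to 0^+}\alpha_\kappa\geq -f(v_*)$.

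The substance of the argument is the matching upper bound $\limsup_{\kappa\to 0^+}\alpha_\kappa\leq-f(v_*)$, which I would prove by contradiction. Suppose there were a constant $\eta>0$ and a sequence $\kappa_n\to 0^+$ with $\alpha_{\kappa_n}\geq-f(v_*)+\eta$. Since $f(w)\geq f(v_*)$ for all $w\in[v_0,v_1]$, the denominator stays uniformly bounded below, namely $f(w)+\alpha_{\kappa_n}\geq \eta$, so that
\begin{equation*}
1=H_{\kappa_n}(\alpha_{\kappa_n})=\int_{v_0}^{v_1}\frac{\kappa_n}{f(w)+\alpha_{\kappa_n}}\,dw\leq\frac{(v_1-v_0)\kappa_n}{\eta}\longrightarrow 0,
\end{equation*}
a contradiction. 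Combining the two bounds yields $\lim_{\kappa\to 0^+}\alpha_\kappa=-f(v_*)$.

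I do not expect a genuine obstacle here; the delicate point is merely the choice of a two-sided estimate that avoids the fine asymptotics of $f$ near $v_*$. The contradiction argument captures exactly the degeneration of the denominator without needing the quadratic expansion $f(w)=f(v_*)+\tfrac{1}{2}f''(v_*)(w-v_*)^2+O(|w-v_*|^3)$; should one later want the convergence rate, that expansion gives $\alpha_\kappa+f(v_*)\sim c\,\kappa^2$, but this is not required for the stated limit.
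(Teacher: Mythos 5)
Your proof is correct and takes essentially the same approach as the paper: your contradiction step uses the constant minorant $f\geq f(v_{*})$ to get $1=H_{\kappa}(\alpha_{\kappa})\leq \kappa(v_{1}-v_{0})/(f(v_{*})+\alpha_{\kappa})$, which is exactly the paper's comparison function $\tilde{H}_{\kappa}$ and is just the contrapositive of its explicit upper bound $\alpha_{\kappa}<\tilde{\alpha}_{\kappa}=\kappa(v_{1}-v_{0})-f(v_{*})$. The only (harmless) streamlining is on the lower-bound side, where you invoke the a priori estimate \eqref{bg_inf_alpha} directly instead of the logarithmic bound $\hat{\alpha}_{\kappa}$ carried over from the proof of Lemma \ref{exist-alpha-isen}; both give $\liminf_{\kappa\to 0^{+}}\alpha_{\kappa}\geq -f(v_{*})$.
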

\begin{proof}
	On one hand, from the proof of lemma \ref{exist-alpha-isen}, there exists a lower bound for $\alpha_{\kappa}$  satisfying $\alpha_{\kappa}>\hat{\alpha}_{\kappa}$, where
	\begin{equation*}
		\displaystyle -f(v_{*})< \hat{\alpha}_{\kappa} < \frac{-f(v_{*})}{1-e^{-\frac{s_{2}}{\kappa}}}.
	\end{equation*}
	On the other hand, we claim that there also exists an upper bound $\tilde{\alpha}_{\kappa}$ for $\alpha_{\kappa}$.	
	
	Define a function
	\begin{equation*}
		\tilde{L}(v)=f(v_{*})\leq f(v),
	\end{equation*}
	then
		\begin{align*}
			H_{\kappa}(\alpha) & =\int_{v_{0}}^{v_{1}}\frac{\kappa}{f(v)+\alpha}dv        \\
			                   & \leq \int_{v_{0}}^{v_{1}}\frac{\kappa}{f(v_{*})+\alpha}dv \\
			                   & =\frac{\kappa(v_{1}-v_{0})}{f(v_{*})+\alpha}              \\
			                   & :=\tilde{H}_{\kappa}(\alpha).
		\end{align*}
	Let, with sufficiently small $\kappa>0$, 
	\begin{equation*}
\tilde{\alpha}_{\kappa}:=\kappa(v_{1}-v_{0})-f(v_{*})>0,
	\end{equation*}
 such that 
 \begin{equation*}
\tilde{H}_{\kappa}(\tilde{\alpha}_{\kappa})=1.
 \end{equation*}
Then one obtains 
\begin{equation*}
H_{\kappa}(\tilde{\alpha}_{\kappa})<\tilde{H}_{\kappa}(\tilde{\alpha}_{\kappa})=1=H_{\kappa}(\alpha_{\kappa}).
\end{equation*}
Since $H_{\kappa}$ is decreasing with respective to $\alpha\in(-f(v_{*}),\infty)$, it holds that
\begin{equation*}
-f(v_{*})<\alpha_{\kappa}<\tilde{\alpha}_{\kappa}.
\end{equation*}
	Hence
	\begin{equation*}
		\hat{\alpha}_{\kappa}<\alpha_{\kappa}<\tilde{\alpha}_{\kappa},
	\end{equation*}
	let $\kappa$ goes to $0$, we have
	\begin{equation*}
		\lim_{\kappa\to 0}\alpha_{\kappa}=-f(v_{*})>0,
	\end{equation*}
	which complete the proof.
\end{proof}
For (ii), the following lemma holds:
\begin{lem}\label{lem:weak solution of 3.1}
The limit function $v^{0}$ is a weak solution to limit problem \eqref{weak-sol1} in the sense that for any test function $\phi(x)\in C^{\infty}[0,1]$, it holds that
\begin{equation}
	\int_{0}^{1}F(v^{0}(x),\alpha_{0})\phi(x)dx=0,
\end{equation}
where $ \alpha_{0} = -f(v_{*}) > 0$. 
\end{lem}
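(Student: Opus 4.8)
The plan is to pass to the weak limit directly in the first-order equation \eqref{first-order}, exploiting the factor $\kappa$ in front of the derivative together with the uniform bound \eqref{epsilon-isen-prior} and the already-computed limit $\alpha_\kappa \to \alpha_0 = -f(v_*)$. Fix a test function $\phi \in C^\infty[0,1]$. Along the subsequence $\{\kappa_n\}$ furnished by Helly's theorem, each $v^{\kappa_n}$ is a $C^2$ solution of $\kappa_n \partial_x v^{\kappa_n} = F(v^{\kappa_n};\alpha_{\kappa_n})$, so multiplying by $\phi$ and integrating over $[0,1]$ gives
\[
\int_0^1 F(v^{\kappa_n};\alpha_{\kappa_n})\,\phi\,dx = \kappa_n \int_0^1 \bigl(\partial_x v^{\kappa_n}\bigr)\phi\,dx .
\]
The strategy is then to show that the right-hand side vanishes in the limit while the left-hand side converges to $\int_0^1 F(v^0;\alpha_0)\phi\,dx$; comparing the two limits yields the claim.

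First I would dispatch the right-hand side. Integrating by parts and inserting the boundary data \eqref{second-order-b.c.},
\[
\kappa_n \int_0^1 \bigl(\partial_x v^{\kappa_n}\bigr)\phi\,dx = \kappa_n\bigl(v_1\phi(1) - v_0\phi(0)\bigr) - \kappa_n \int_0^1 v^{\kappa_n}\,\phi'\,dx .
\]
Since \eqref{epsilon-isen-prior} gives $v_0 < v^{\kappa_n} < v_1$ uniformly in $n$, and $\phi,\phi'$ are bounded on $[0,1]$, the entire expression is $O(\kappa_n)$ and tends to $0$ as $n\to\infty$. For the left-hand side I would use the a.e.\ convergence $v^{\kappa_n}(x)\to v^0(x)$ supplied by Helly's theorem: by continuity of $f$ one has $f(v^{\kappa_n})\to f(v^0)$ a.e., and combined with $\alpha_{\kappa_n}\to\alpha_0$ this gives $F(v^{\kappa_n};\alpha_{\kappa_n})\to F(v^0;\alpha_0)$ a.e. The integrands are uniformly bounded because $v^{\kappa_n}$ takes values in the compact interval $[v_0,v_1]$, on which $|f|$ is bounded, and $\{\alpha_{\kappa_n}\}$ is convergent hence bounded. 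The bounded convergence theorem on the finite interval $[0,1]$ then yields
\[
\int_0^1 F(v^{\kappa_n};\alpha_{\kappa_n})\,\phi\,dx \longrightarrow \int_0^1 F(v^0;\alpha_0)\,\phi\,dx ,
\]
and passing to the limit in the displayed identity forces $\int_0^1 F(v^0;\alpha_0)\phi\,dx = 0$ for every $\phi\in C^\infty[0,1]$, which is precisely the asserted weak formulation.

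The argument is routine vanishing-parameter analysis, and the only step demanding genuine care is the interchange of limit and integral on the left: this I would anchor firmly on the a priori bound \eqref{epsilon-isen-prior}, which provides the uniform $L^\infty$ envelope needed for the bounded convergence theorem, and on the previously established limit of $\alpha_\kappa$. I would also emphasize, looking ahead, that the computation is independent of the chosen subsequence, so any Helly limit is a weak solution with the \emph{same} constant $\alpha_0 = -f(v_*)>0$. This is the crux of what follows the lemma: the weak equation says $f(v^0) = -\alpha_0 = f(v_*)$ a.e., and the strict convexity of $f$ forces $v^0 = v_*$ a.e.\ — a value strictly interior to $(v_0,v_1)$ and thus outside $\{v_0,v_1\}$, which is exactly what will preclude $v^0$ from being any shock profile \eqref{shock-solution_hb}.
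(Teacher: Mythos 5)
Your proof is correct and follows essentially the same route as the paper's: multiply \eqref{first-order} by the test function, integrate by parts to show the $\kappa_n$-weighted term vanishes using the uniform bound \eqref{epsilon-isen-prior}, and pass to the limit in the $F$-term via the a.e.\ convergence from Helly's theorem and $\alpha_{\kappa_n}\to\alpha_0$. Your only addition is to spell out the bounded convergence justification that the paper leaves implicit, which is a welcome clarification rather than a deviation.
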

\begin{proof}
For any test function $\phi(x) \in C^{\infty}[0,1]$, since $v^{\kappa}\in [v_{0},v_{1}]$, then 
\begin{align*}
\int_{0}^{1}\kappa_{n} \partial_{x}v^{\kappa_{n}}\phi dx&=\kappa_{n} v^{\kappa_{n}}\phi|_{x=0}^{x=1}-\int_{0}^{1}\kappa_{n} v^{\kappa_{n}}\partial_{x}\phi dx\\
&\longrightarrow 0, \quad as\quad \kappa_{n} \longrightarrow 0.
\end{align*}
Moreover, 
\begin{equation*}
\lim_{\kappa_{n}\longrightarrow0}\int_{0}^{1}F(v^{\kappa_{n}};\alpha_{\kappa})\phi(x)dx=\int_{0}^{1}F(v^{0};\alpha_{0})\phi(x)dx.
\end{equation*}
Then the lemma follows by \eqref{first-order}.
\end{proof}

By lemma \ref{lem:weak solution of 3.1}, for a.e. $x \in (0,1)$, $v^0 (x)$ satisfies 
\begin{equation*}
	F(v^0(x),\alpha_0)=0,
\end{equation*}
or it can be rewritten as
\begin{equation*}\label{f(v)=f(v*)}
	f(v^0(x))=-\alpha_0 = f(v_*).
\end{equation*}
Hence the limit function $v^0(x)$ can only take the value $v_*$ for a.e $x \in (0,1)$. 
Therefore $v^\kappa$ can not converge to any shock solution of the form \eqref{shock-solution_hb}. 
Then Theorem \ref{thm: isen-heat} follows from this fact as well as Lemma \ref{lem:hb-exist-solu}.

\subsection{The non-isentropic polytropic gases}\label{Section 3.2}

In this subsection, we continue to investigate the case of polytropic gases and deal with the problem [{\bf HP}].
Following the ideas in \cite{FangZhao2021CPAA}, the boundary value problem \eqref{non-heat-eq} and \eqref{non-heat-bd} will be reformulated as a problem for an ordinary equation of first order with an unknown parameter, which is studied carefully to lead us to Theorem \ref{thm: non-heat}.

\subsubsection{Reformulation of the problem [{\bf HP}]}


Let $ U=(u,\rho,p) $ be a solution to the system \eqref{non-heat-eq}. By the assumption \eqref{rho-velocity}, it holds that
\begin{equation}\label{density-u-hp}
\rho=\frac{1}{u}.
\end{equation}
Moreover, by the second equation of $(\ref{non-heat-eq})$, one has, with $A\defs\rho_{0}q_{0}^{2}+p_{0}$ a constant,
\begin{equation}\label{22}
	\rho u^{2}+p\equiv A.
\end{equation}
Then 
\[
	e = \frac{1}{\gamma-1}\frac{p}{\rho} = \frac{1}{\gamma-1}p(A-p),
\]
and the Bernoulli function 
\begin{equation}
	\Phi=\frac{1}{2}u^{2}+\frac{\gamma}{\gamma-1}\frac{p}{\rho}=\frac{1}{2}(A-p)^{2}+\frac{\gamma}{\gamma-1}p(A-p)\defs\Phi(p).
\end{equation}
Thus, the third equation of $(\ref{non-heat-eq})$ can be reformulated as, with $\displaystyle \kappa:=\frac{\epsilon}{\gamma-1}>0$,
\begin{equation}\label{non-2th-p}
	\begin{aligned}
		 & \partial_{x}(\frac{1}{2}(A-p)^{2}+\frac{\gamma}{\gamma-1}p(A-p))                            \\
		 & =\frac{\epsilon}{\gamma-1}\partial_{x}((A-2p)\partial_{x}p)                                 \\
		 & =\frac{\epsilon}{\gamma-1} (A-2p)\partial_{xx}p-2\frac{\epsilon}{\gamma-1}(\partial_{x}p)^2 \\
		 & =\kappa (A-2p)\partial_{xx}p-2\kappa(\partial_{x}p)^2.
	\end{aligned}
\end{equation}
Obviously, the boundary conditions \eqref{non-heat-bd} can be simplified as
\begin{equation}\label{hp-bd}
	p(0) = p_{0},\qquad p(1) = p_{1}.
\end{equation}
Note that, by the Rankine-Hugoniot conditions \eqref{R-H}, it holds that
\begin{equation}\label{hp-phi-bd}
	\Phi(p_{0}) = \Phi(p_{1}).
\end{equation}
Hence, the BVP \eqref{non-heat-eq} and \eqref{non-heat-bd} is reduced to a boundary value problem of a nonlinear ordinary differential equation \eqref{non-2th-p} of second order with the boundary value conditions \eqref{hp-bd} satisfying \eqref{hp-phi-bd}.

Let $f(p)\defs \Phi(p)-\Phi(p_{0})$. Then it is a smooth, strictly concave function defined in $(0,\infty)$, and satisfies
\begin{equation}
f(p_{0})=f(p_{1})=0.
\end{equation} 
Thus, it holds that
\begin{equation}\label{sign-f-hp}
f(p)>0, \qquad \text{for} \quad \text{any} \quad p \in (p_{0},p_{1}).
\end{equation}
Integrating \eqref{non-2th-p} from 0 to $x$, one obtains that the solution $ p=p^{\kappa}(x) $  to the BVP \eqref{non-2th-p} and \eqref{hp-bd} satisfies
\begin{equation}\label{non-first}
\kappa (A-2p^{\kappa})\partial_{x}p^{\kappa}=F(p^{\kappa},\alpha_{\kappa})
\end{equation}
where
\begin{equation}
F(p,\alpha):=f(p)+\alpha,
\end{equation}
and $\alpha_{\kappa}:=\kappa (A-2p_{0})\partial_{x}p^{\kappa}(0)$ is an unknown constant to be determined together with $ p^{\kappa} $ under the boundary conditions \eqref{hp-bd}. 
Therefore, the BVP \eqref{non-2th-p} and \eqref{hp-bd} is further reduced to the boundary value problem of the first order ordinary differential equation \eqref{non-first} and the boundary conditions \eqref{hp-bd} for the unknowns $ (p^{\kappa};\ \alpha_{\kappa}) $.


Moreover, the equation (\ref{non-first}) formally converges to
\begin{equation}\label{weak-sol1-hp}
F(p^{0}(x),\alpha_{0})=0
\end{equation}
as $\kappa \to 0$. 
The weak solution $(p^{\kappa},\alpha_{\kappa})$ to \eqref{weak-sol1-hp} is defined in the sense that, for any test functions $\phi(x)\in C^{\infty}[0,1]$, it holds that
\begin{equation}\label{non-weak-sol-hp}
\int_{0}^{1}F(p^{0}(x),\alpha_{0})\phi(x)dx = 0.
\end{equation}
Obviously, for any $ 0 < x_{s} <1 $, the weak solutions $(p^{0}(x;x_{s}), \alpha_{0})$ associating with the shock solutions \eqref{shock-solution} satisfy that $ \alpha_{0} = 0 $ and
\begin{equation}\label{shock-solution_hp}
p^{0}(x;x_{s}):=
\left\{
\begin{aligned}
p_{0}, & \quad 0\leq x < x_{s},  \\
p_{1}, & \quad x_{s} < x \leq 1.
\end{aligned}
\right.
\end{equation}

Thus, the singular limit problem [{\bf HP}] is reformulated as the following problem.

\vskip 5px

\underline{Problem [{\bf HP-R}]}:

\begin{quotation}
	
	Let $\kappa>0$. Try to find a solution $(p^{\kappa},\alpha_{\kappa})$ satisfying the equation \eqref{non-first} and the boundary conditions \eqref{hp-bd}.
	
	Furthermore, find shock solutions, among weak solutions $(p^{0}(x;x_{s}), \alpha_{0})$ in \eqref{shock-solution_hp} to the system \eqref{weak-sol1-hp}, which could be the limit of $(p^{\kappa},\alpha_{\kappa})$ ( or its subsequences ) as $\kappa\sTo 0$. \qed
\end{quotation}

\vskip 5px

Therefore the aim of this subsection is to solve the reformulated problem [{\bf HP-R}]. 
Analogous to the argument for the case of barotropic gases, we follow the techniques in \cite{FangZhao2021CPAA} to deal with the problem  [{\bf HP-R}].
However, there arises a new difficulty since the coefficient of the principle part of \eqref{non-first} may vanish, which will lead to degeneracy of the equation.
In this paper, certain sufficient conditions will be proposed to avoid this degeneracy, under which the existence of the solutions $(p^{\kappa},\alpha_{\kappa})$ for any $\kappa>0$ will be established. 
It is interesting to further observe that the solutions $(p^{\kappa},\alpha_{\kappa})$ do converge to one of the shock solutions in \eqref{shock-solution_hp} as $\kappa\sTo 0$, which is different from the asymptotic behaviors for the case of barotropic gases.


\subsubsection{Existence of the solution $(p^{\kappa},\alpha_{\kappa})$ to \eqref{non-first} and \eqref{hp-bd}}

To employ the ideas and techniques in \cite{FangZhao2021CPAA}, one needs that $p^{\kappa}$ is strictly monotone such that it is invertible. The following lemma presents a sufficient condition for the monotonicity.
\begin{lem}\label{hp-p-monotone}
	Suppose  that either (HP1) or (HP2) in Theorem \ref{thm: non-heat} holds.
	Then if $p^{\kappa}\in C^{2}(0,1)\cap C^{1}[0,1]$ is a solution to the boundary value problem \eqref{non-2th-p} and \eqref{hp-bd}, 
	it holds that
	\begin{equation}\label{non-prior}
		p_{0}< p^{\kappa}< p_{1}, \qquad for \quad any \quad x \in (0,1).
	\end{equation}
	Moreover, one has
	\begin{equation}\label{non-dao}
		\partial_{x}p^{\kappa}(0)>0,\qquad \partial_{x}p^{\kappa}(1)>0.
	\end{equation}
\end{lem}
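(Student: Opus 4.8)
The plan is to read \eqref{non-2th-p} as the second-order ODE
\begin{equation*}
\kappa(A-2p^{\kappa})\partial_{xx}p^{\kappa}=f'(p^{\kappa})\partial_{x}p^{\kappa}+2\kappa(\partial_{x}p^{\kappa})^{2},
\end{equation*}
and to reduce everything to the sign of the leading coefficient $\kappa(A-2p^{\kappa})$. The crucial structural observation is that this equation carries no zeroth-order term, so it obeys a maximum principle once the coefficient is kept away from zero. Moreover, along any flow with $\rho u=1$ and $\rho u^{2}+p=A$ one has the identity $A-2p=u-p=p(\gamma M^{2}-1)$, where $M$ denotes the local Mach number; hence non-degeneracy $A-2p>0$ is exactly the condition $\gamma M^{2}>1$. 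Since $M_{0}>1$ forces $A-2p_{0}>0$, and since $A-2p$ is strictly decreasing in $p$, degeneracy can only threaten near the downstream value $p_{1}$.

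The main computation is to verify that (HP1) and (HP2) are precisely the hypotheses ensuring $A-2p_{1}\ge0$, i.e. $\gamma M_{1}^{2}\ge1$. Using the Rankine--Hugoniot conditions \eqref{R-H} to write the downstream Mach number as
\begin{equation*}
M_{1}^{2}=\frac{(\gamma-1)M_{0}^{2}+2}{2\gamma M_{0}^{2}-(\gamma-1)},
\end{equation*}
the inequality $\gamma M_{1}^{2}\ge1$ becomes, after clearing the positive denominator, $\gamma M_{0}^{2}(\gamma-3)\ge-(3\gamma-1)$. For $1<\gamma<3$ this is exactly \eqref{Mach}, the content of (HP1); for $\gamma\ge3$ the left-hand side is nonnegative while the right-hand side is negative, so it holds for every $M_{0}>1$, which is (HP2). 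Consequently $A-2p\ge A-2p_{1}\ge0$ on the whole range $[p_{0},p_{1}]$, with strict positivity for $p\in[p_{0},p_{1})$.

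Granting non-degeneracy, I would obtain the monotonicity and the bounds from the absent zeroth-order term. Setting $q=\partial_{x}p^{\kappa}$, the equation turns into the first-order linear ODE
\begin{equation*}
q'=\frac{f'(p^{\kappa})+2\kappa q}{\kappa(A-2p^{\kappa})}\,q,
\end{equation*}
so that if $q$ vanishes at a single point it vanishes identically, forcing $p^{\kappa}$ to be constant and contradicting $p^{\kappa}(0)=p_{0}\ne p_{1}=p^{\kappa}(1)$. Hence $q$ keeps a fixed sign, and since $p^{\kappa}(1)>p^{\kappa}(0)$ it must be positive; this yields at once the strict bounds \eqref{non-prior} and the endpoint derivative signs \eqref{non-dao}. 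Equivalently, the weak maximum principle applied to $a(x)p''+b(x)p'=0$ with $a=\kappa(A-2p^{\kappa})>0$ gives $p_{0}\le p^{\kappa}\le p_{1}$, and the Hopf lemma gives \eqref{non-dao}, exactly as in the barotropic Lemma \ref{isen-heat-prior-hopf}.

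The hard part is the circularity built into the previous step: the positivity of the leading coefficient needed to run the argument presupposes $p^{\kappa}\le p_{1}$, which is part of the conclusion. I would resolve this by a continuation argument on the maximal interval $[0,\beta)$ on which $p^{\kappa}<A/2$; it is nonempty because $p^{\kappa}(0)=p_{0}<A/2$. On this interval the coefficient is positive, so $q$ has constant sign, and the alternatives $q\equiv0$ (giving $p^{\kappa}\equiv p_{0}$) and $q<0$ (driving $p^{\kappa}$ below $p_{0}$, whence $p^{\kappa}(1)<p_{1}$ is impossible) are both excluded; thus $q>0$ and $p^{\kappa}$ increases from $p_{0}$. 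Since $p_{1}\le A/2$, a monotone $p^{\kappa}$ cannot attain $p_{1}$ at an interior point without overshooting the prescribed endpoint value, which, together with the first integral \eqref{non-first}, $\kappa(A-2p^{\kappa})\partial_{x}p^{\kappa}=f(p^{\kappa})+\alpha_{\kappa}$, used to control $q$ near the degenerate level, forces $\beta=1$ and $p_{0}<p^{\kappa}<p_{1}$ on $(0,1)$. I expect the genuinely delicate case to be the borderline equality in (HP1), where $p_{1}=A/2$ and the coefficient degenerates exactly at $x=1$: there the endpoint must be analyzed separately through \eqref{non-first} and the second-order equation, and it is conceivable that this limiting case ultimately requires the inequality in \eqref{Mach} to be strict.
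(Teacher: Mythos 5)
Your proposal is correct, and it rests on the same two pillars as the paper's own proof: non-degeneracy of the leading coefficient $\kappa(A-2p)$ on $[p_0,p_1]$ as the precise content of (HP1)/(HP2), followed by a monotonicity argument plus the Hopf lemma at the endpoints. The routes differ in two instructive ways. For non-degeneracy, the paper computes $A-2p_1$ directly via the Prandtl relation $q_0q_1=c_*^2$, arriving at $A-2p_1=p_0\left(\frac{(\gamma^2-3\gamma)M_0^2+3\gamma-1}{\gamma+1}\right)$; your route through the identity $A-2p=p(\gamma M^2-1)$ and the downstream Mach relation $M_1^2=\frac{(\gamma-1)M_0^2+2}{2\gamma M_0^2-(\gamma-1)}$ is algebraically equivalent but more transparent, since it exhibits (HP1)/(HP2) as exactly the statement $\gamma M_1^2\geq 1$. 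For the bounds \eqref{non-prior}, the paper does not argue by hand: it applies the appendix Lemma \ref{lemma2} (via the remark for the increasing case) to the first integral \eqref{non-first}, and the proof of that appendix lemma is precisely the overshoot/continuation analysis that you reconstruct on the maximal interval where $p^{\kappa}<A/2$. Your self-contained substitute is sound, but the step ``forces $\beta=1$'' deserves to be made explicit; the cleanest completion is via \eqref{non-first} itself: since $q=\partial_x p^{\kappa}>0$ at $x=0$ gives $\alpha_\kappa>0$, at any downward crossing $\tilde{x}$ of the level $p_1$ one would get $\kappa(A-2p_1)\partial_x p^{\kappa}(\tilde{x})=f(p_1)+\alpha_\kappa=\alpha_\kappa>0$, contradicting $\partial_x p^{\kappa}(\tilde{x})\leq 0$, so $p^{\kappa}$ can never overshoot $p_1$ and return to meet $p^{\kappa}(1)=p_1$. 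Finally, your flag on the borderline case is a genuine catch rather than excess caution: (HP1) permits equality $M_0^2=\frac{3\gamma-1}{\gamma(3-\gamma)}$, in which case $A-2p_1=0$ exactly, so the last inequality in the paper's displayed chain $A-2p^{\kappa}>A-2p_1>0$ fails, the hypothesis of Lemma \ref{lemma2} that the coefficient not vanish on $[p_0,p_1]$ fails at $p_1$, and the coefficient degenerates at $x=1$; indeed, for a $C^1[0,1]$ solution the first integral at $x=1$ would then force $\alpha_\kappa=0$, so the separate endpoint analysis you call for is really needed (or the inequality in \eqref{Mach} should be taken strict). On this point you are more careful than the source.
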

\begin{proof}
	Applying lemma \ref{lemma2} to \eqref{non-first}, one obtains the \emph{a prior} estimate \eqref{non-prior} of $p^{\kappa}$.
	As for (\ref{non-dao}), it comes from the Hopf lemma. Indeed, by condition (\ref{Mach}), it holds that
		\begin{align*}
			A-2p^{\kappa}> & A-2p_{1}=\rho_{1}q_{1}^{2}-p_{1}                                                                              \\
			      & =\frac{q_{0}^{2}}{c_{*}^{2}}\rho_{0}\cdot (\frac{c_{*}^{2}}{q_{0}})^2-p_{0}((1+\nu^{2})M_{0}^{2}-\nu^{2})     \\
			      & =p_{0}\left(\gamma \nu^{2}M_{0}^{2}+\frac{2\gamma}{\gamma+1}+\nu^{2}-\frac{2\gamma}{\gamma+1}M_{0}^{2}\right) \\
			      & =p_{0}\left(\frac{\gamma^{2}-3\gamma}{\gamma+1}M_{0}^{2}+\frac{3\gamma-1}{\gamma+1}\right)                    \\
			      & >0
		\end{align*}
	where $\displaystyle \nu^{2}=\frac{\gamma-1}{\gamma+1}$, $\displaystyle M_{0}^{2}=\frac{\rho_{0}q_{0}^{2}}{\gamma p_{0}}$, and
	$$\displaystyle c_{*}^{2}=\nu^{2}(q_{0}^{2}+\frac{2\gamma}{\gamma-1}\frac{p_{0}}{\rho_{0}}).$$ 
	This inequality implies that the coefficient $\kappa (A-2p)$ before the second order term $\partial_{xx}p$ is strictly positive such that the degeneracy will not occur.
	Then the Hopf lemma can be applied to the second order equation \eqref{non-2th-p} to obtain \eqref{non-dao}.
\end{proof}
\begin{rem}
	Lemma \ref{hp-p-monotone} yields that
	\begin{equation}
		\alpha_{\kappa} = \kappa (A-2p_{0})\partial_{x}p^{\kappa}(0) > 0,
	\end{equation}
	and then, by virtue of \eqref{sign-f-hp} and the estimate \eqref{non-prior}, it holds that, for any $ x\in [0,1] $,
	\begin{equation}
		\kappa (A-2p^{\kappa})\partial_{x}p^{\kappa} = F(p^{\kappa}(x),\alpha_{\kappa}) = f(p^{\kappa}(x))+\alpha_{\kappa} > 0.
	\end{equation}
	Since $A-2p^{\kappa}>0$, it holds that $p^{\kappa}$ is a strictly increasing function of $x$ such that the inverse function $x=X_{\kappa}(p)$ exists, which satisfies the following boundary value condition problem:
	\begin{equation}\label{non-ODE}
	\left\{
	\begin{aligned}
	& \frac{dX_{\kappa}(p)}{dp}=  \frac{\kappa(A-2p)}{F(p;\alpha_{\kappa})}, \\
	& X_{\kappa}(p_{0})=0, \quad  X_{\kappa}(p_{1})=1.
	\end{aligned}
	\right.
	\end{equation}
	Obviously, solving the boundary value problem \eqref{non-first} and \eqref{hp-bd} is equivalent to solving the problem \eqref{non-ODE}.
\end{rem}


It can be easily seen that the solution $X_{\kappa}(p)$ to the problem \eqref{non-ODE} can be written as
\begin{equation}\label{hp-X}
	X_{\kappa}(p) = \int_{p_{0}}^{p}\frac{\kappa(A-2t)}{F(t;\alpha_{\kappa})}dt,
\end{equation}
with the unknown constant $\alpha_{\kappa}>0$ being determined by:
\begin{equation}\label{hp-alpha}
	1=\int_{p_{0}}^{p_{1}}\frac{\kappa(A-2p)}{F(p;\alpha_{\kappa})}dp.
\end{equation}
Therefore, it suffices to show the existence of the solution $ \alpha_{\kappa} $ to the equation \eqref{hp-alpha} to establish the existence of the solution to the problem \eqref{non-ODE}, which is the consequence of the following lemma.
\begin{lem}\label{non-alpha-exist}
	For any $\kappa>0$, there exists a unique constant $\alpha_{\kappa}>0$ such that
	\begin{equation}
		H_{\kappa}(\alpha_{\kappa})=1,
	\end{equation}
	where, for $ \alpha>0 $,
	\begin{equation}
		H_{\kappa}(\alpha)=\int_{p_{0}}^{p_{1}}\frac{\kappa(A-2p)}{F(p,\alpha)}dp.
	\end{equation}
\end{lem}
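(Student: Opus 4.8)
The plan is to handle Lemma \ref{non-alpha-exist} exactly as its barotropic counterpart Lemma \ref{exist-alpha-isen}: I will show that $H_\kappa$ is a continuous, strictly monotone function of $\alpha$ on $(0,\infty)$ whose range covers the value $1$, and then invoke the intermediate value theorem together with monotonicity to extract a unique root. First I would check that $H_\kappa(\alpha)$ is well defined for every $\alpha>0$. By \eqref{sign-f-hp} one has $f(p)\ge 0$ on $[p_0,p_1]$, hence $F(p,\alpha)=f(p)+\alpha\ge \alpha>0$, so the integrand is bounded and the integral converges. Moreover the numerator $\kappa(A-2p)$ is strictly positive on the whole interval: the hypotheses (HP1)/(HP2) ensure $A-2p_1>0$ (precisely the computation carried out in the proof of Lemma \ref{hp-p-monotone}), whence $A-2p\ge A-2p_1>0$ throughout $[p_0,p_1]$. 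Thus $H_\kappa(\alpha)>0$ for all $\alpha>0$.

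Next I would establish the qualitative properties. Differentiating under the integral sign gives $\partial_\alpha H_\kappa(\alpha)=-\int_{p_0}^{p_1}\kappa(A-2p)/F(p,\alpha)^2\,dp<0$, so $H_\kappa$ is strictly decreasing; continuity on $(0,\infty)$ follows from dominated convergence, the integrand being uniformly bounded for $\alpha$ in compact subsets of $(0,\infty)$. For the upper end of the range, the bound $F(p,\alpha)\ge\alpha$ yields $H_\kappa(\alpha)\le \kappa(A-2p_0)(p_1-p_0)/\alpha\to 0$ as $\alpha\to+\infty$.

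The crucial step is the behavior as $\alpha\to 0^+$, where I expect the main difficulty and where I would show $H_\kappa(\alpha)\to+\infty$. The mechanism is the endpoint degeneracy of $f$: since $f$ is strictly concave with $f(p_0)=0$ and $f>0$ just to the right of $p_0$, one has $f'(p_0)>0$, and concavity gives the tangent-line upper bound $f(p)\le f'(p_0)(p-p_0)$ on all of $[p_0,p_1]$. Combining this with $A-2p\ge A-2p_1>0$ gives
\[
H_\kappa(\alpha)\ge \kappa(A-2p_1)\int_{p_0}^{p_1}\frac{dp}{f'(p_0)(p-p_0)+\alpha},
\]
and the right-hand side equals $\dfrac{\kappa(A-2p_1)}{f'(p_0)}\ln\!\big(1+f'(p_0)(p_1-p_0)/\alpha\big)$, which diverges logarithmically as $\alpha\to 0^+$. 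Hence $H_\kappa(\alpha)\to+\infty$. The positivity $A-2p>0$ is genuinely needed here: without it the endpoint factor could annihilate the divergence, so this is where the hypotheses (HP1)/(HP2) enter essentially rather than cosmetically.

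Finally, continuity and strict monotonicity of $H_\kappa$, together with $\lim_{\alpha\to 0^+}H_\kappa=+\infty$ and $\lim_{\alpha\to\infty}H_\kappa=0$, let me conclude by the intermediate value theorem that there is exactly one $\alpha_\kappa>0$ with $H_\kappa(\alpha_\kappa)=1$. I would remark that the argument parallels Lemma \ref{exist-alpha-isen} but is in fact cleaner: because $f$ is \emph{positive} on the open interval (rather than negative, as in the convex/barotropic case), no positive threshold on $\alpha$ is forced, and the natural left endpoint of the admissible range is simply $\alpha=0$, which is precisely where the divergence of $H_\kappa$ occurs.
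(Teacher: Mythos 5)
Your proposal is correct and follows essentially the same route as the paper: continuity and strict monotonicity of $H_\kappa$ on $(0,\infty)$, the bound $F(p,\alpha)\geq\alpha$ giving $H_\kappa(\alpha)\to 0$ as $\alpha\to+\infty$, and the tangent-line estimate $f(p)\leq f'(p_0)(p-p_0)$ producing a logarithmic divergence as $\alpha\to 0^+$, concluded by the intermediate value theorem. Your only deviation is a minor streamlining — you bound the numerator below by the constant $A-2p_1>0$ instead of splitting $\kappa(A-2p)$ into two integrals as the paper does — and you usefully make explicit that the positivity $A-2p>0$ (hence hypotheses (HP1)/(HP2), via Lemma \ref{hp-p-monotone}) is what the estimate genuinely requires, a point the paper leaves implicit in this lemma.
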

\begin{proof}
	Let
	\begin{equation}
		H_{\kappa}(\alpha):=\int_{p_{0}}^{p_{1}}\frac{\kappa(A-2p)}{F(p;\alpha)}dp=\int_{p_{0}}^{p_{1}}\frac{\kappa(A-2p)}{f(p)+\alpha}dp.
	\end{equation}
	Obviously, $H_{\kappa}(\alpha)$ is a continuous strictly decreasing function with respect to $\alpha \in (0,+\infty)$. Furthermore,
	\begin{equation}
		\lim_{\alpha\to +\infty}H_{\kappa}(\alpha)=0.
	\end{equation}
	Therefore, it suffices to prove the following inequality holds:
	\begin{equation}\label{more_than1_2}
		\lim_{\alpha \to 0}H_{\kappa}(\alpha)>1.
	\end{equation}
	Since $0<f(p)<f'(p_{0})(p-p_{0})$, it holds that
		\begin{align}
			&\int_{p_{0}}^{p_{1}}\frac{\kappa(A-2p)}{f(p)+\alpha}dp\notag \\
			& >\int_{p_{0}}^{p_{1}}\frac{\kappa A}{f'(p_{0})(p-p_{0})+\alpha}dp-\int_{p_{0}}^{p_{1}}\frac{2\kappa p}{f'(p_{0})(p-p_{0})+\alpha}dp                                    \notag                  \\
			& =\frac{\kappa A}{f'(p_{0})}\ln\left(1+\frac{f'(p_{0})(p_{1}-p_{0})}{\alpha}\right)                                                                                               \notag         \\
			& \quad - \left [ \frac{2\kappa(p_{1}-p_{0})}{f'(p_{0})}+\frac{2\kappa}{f'(p_{0})}\left(p_{0}-\frac{\alpha}{f'(p_{0})}\ln\left(1+\frac{f'(p_{0})(p_{1}-p_{0})}{\alpha}\right)\right)\right]\notag \\
			& =-\frac{2\kappa(p_{1}-p_{0})}{f'(p_{0})}+\frac{\kappa}{f'(p_{0})}\left(A-2p_{0}+\frac{2\alpha}{f'(p_{0})}\right)\ln\left(1+\frac{f'(p_{0})(p_{1}-p_{0})}{\alpha}\right).
		\end{align}
	The last term 
	$$\displaystyle -\frac{2\kappa(p_{1}-p_{0})}{f'(p_{0})}+\frac{\kappa}{f'(p_{0})}(A-2p_{0}+\frac{2\alpha}{f'(p_{0})})\ln(1+\frac{f'(p_{0})(p_{1}-p_{0})}{\alpha})$$ 
	tends to $\infty$ as $\alpha\to 0.$ Therefore \eqref{more_than1_2} holds and the proof is completed.
\end{proof}

By Lemma \ref{non-alpha-exist}, one immediately obtains the following lemma.
\begin{lem}\label{lem:hp-exist-solu}
	Let $\kappa>0$. There exists a unique solution $(p^{\kappa},\alpha_{\kappa})$, with $ \alpha_{\kappa} > 0 $, satisfying the equation \eqref{non-first} and the boundary conditions \eqref{hp-bd}.
	
\end{lem}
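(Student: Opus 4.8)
The plan is to obtain this existence-uniqueness statement as an immediate consequence of the unique solvability of the scalar equation $H_{\kappa}(\alpha)=1$ established in Lemma \ref{non-alpha-exist}, combined with the equivalence between the first-order boundary value problem \eqref{non-first}--\eqref{hp-bd} and the inverse-function problem \eqref{non-ODE} recorded in the remark following Lemma \ref{hp-p-monotone}. Thus no genuinely new analysis is required; the work is in assembling the ingredients correctly.

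For existence, I would take the constant $\alpha_{\kappa}>0$ furnished by Lemma \ref{non-alpha-exist} and define $X_{\kappa}(p)$ by the explicit integral formula \eqref{hp-X}. Because the hypothesis that (HP1) or (HP2) holds guarantees, via the computation in Lemma \ref{hp-p-monotone}, that $A-2p>0$ throughout $[p_{0},p_{1}]$, while $F(p;\alpha_{\kappa})=f(p)+\alpha_{\kappa}>0$ there thanks to $f\ge 0$ on $[p_{0},p_{1}]$ (by \eqref{sign-f-hp}) together with $\alpha_{\kappa}>0$, the integrand in \eqref{hp-X} is smooth, strictly positive, and bounded away from zero; in particular there is no endpoint singularity, since $F(p_{0};\alpha_{\kappa})=F(p_{1};\alpha_{\kappa})=\alpha_{\kappa}>0$. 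Hence $X_{\kappa}$ is a strictly increasing $C^{\infty}$ function with $X_{\kappa}(p_{0})=0$, and by the defining relation \eqref{hp-alpha}, which is exactly $H_{\kappa}(\alpha_{\kappa})=1$, one also has $X_{\kappa}(p_{1})=1$. Therefore $X_{\kappa}$ solves \eqref{non-ODE}, and inverting it produces a strictly increasing $p^{\kappa}\in C^{2}[0,1]$ satisfying \eqref{non-first} with the boundary conditions \eqref{hp-bd}.

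For uniqueness, I would start from an arbitrary solution $(p^{\kappa},\alpha_{\kappa})$ of \eqref{non-first}--\eqref{hp-bd}. Lemma \ref{hp-p-monotone} forces $p^{\kappa}$ to be strictly increasing, so its inverse solves \eqref{non-ODE}; integrating the latter from $p_{0}$ to $p_{1}$ and using $X_{\kappa}(p_{1})=1$ shows that $\alpha_{\kappa}$ must satisfy $H_{\kappa}(\alpha_{\kappa})=1$. By Lemma \ref{non-alpha-exist} this equation admits a unique positive root, which pins down $\alpha_{\kappa}$; the integral formula \eqref{hp-X} then determines $X_{\kappa}$ uniquely, and hence $p^{\kappa}$ uniquely.

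I expect the argument to be routine once the earlier lemmas are invoked; the only point demanding care is to confirm that the denominator $F(p;\alpha_{\kappa})$ stays strictly positive all the way to the endpoints, so that $X_{\kappa}$ is a genuine diffeomorphism and the recovered $p^{\kappa}$ enjoys full $C^{2}$ regularity. This is precisely where the positivity $\alpha_{\kappa}>0$ from Lemma \ref{non-alpha-exist} and the sign condition \eqref{sign-f-hp} are used, together with the non-degeneracy $A-2p>0$ secured under (HP1)/(HP2) in Lemma \ref{hp-p-monotone}.
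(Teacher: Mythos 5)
Your proposal is correct and takes essentially the same route as the paper, which likewise obtains the lemma as an immediate consequence of Lemma \ref{non-alpha-exist} combined with the equivalence, recorded in the remark after Lemma \ref{hp-p-monotone}, between the problem \eqref{non-first}--\eqref{hp-bd} and the inverse-function problem \eqref{non-ODE} with the representation \eqref{hp-X}. Your write-up merely makes explicit the details the paper leaves implicit, namely the endpoint non-degeneracy $F(p_{0};\alpha_{\kappa})=F(p_{1};\alpha_{\kappa})=\alpha_{\kappa}>0$ and the positivity $A-2p>0$ under (HP1)/(HP2), both of which are indeed the correct justifications.
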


\subsubsection{Convergence of $(p^{\kappa},\alpha_{\kappa})$ as $ \kappa\sTo0+ $}

Now we continue to investigate the asymptotic behaviors of the solutions $(p^{\kappa},\alpha_{\kappa})$ to the problem \eqref{non-first} and \eqref{hp-bd} as the parameter $ \kappa $ vanishes, checking that whether they will converge to a shock solution among \eqref{shock-solution_hp} or not.

By the estimate \eqref{non-prior} and the monotonicity of $ p^{\kappa} $, one obtains that
\begin{equation}
T.V.p^{\kappa}=p_{1}-p_{0}.
\end{equation}
Then by Helly's theorem, there exists a subsequence $\{p^{\kappa_{n}}\}$ and a function $p^{0}$ of bounded variation, such that $\kappa_{n}\sTo0$ as $ n\sTo\infty $, and for a.e. $ x\in[0,1] $,
\begin{equation}
p^{\kappa_{n}}(x)\to p^{0}(x),\qquad \text{as} \quad n \to \infty.
\end{equation}
Then, one needs to check whether the limit function $p^{0}$ is a transonic shock solution among \eqref{shock-solution_hp} and whether it is the unique limit of $p^{\kappa}$. 
To this aim, we are going to carry out the three steps listed in the subsection \ref{Section 3.1.3}, which finally shows that, different from the case of barotropic gases, the solutions $(p^{\kappa},\alpha_{\kappa})$ to the problem \eqref{non-first} and \eqref{hp-bd} do converge to a shock solution $(p^{0},0)$ among \eqref{shock-solution_hp} as $ \kappa\sTo0+ $.


For the first step (i) listed in the subsection \ref{Section 3.1.3}, we have the following lemma
\begin{lem}\label{limit0}
	It holds that\\
	$${\lim_{\kappa\to 0^{+}}\alpha_{\kappa}=0}.$$
\end{lem}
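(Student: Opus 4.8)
The plan is to exploit the fact that the coefficient $\kappa$ enters the determining relation \eqref{hp-alpha} only as an overall multiplicative factor, so that the $\kappa$-dependence of $\alpha_{\kappa}$ can be read off from a single $\kappa$-independent function. First I would set
\[
G(\alpha):=\int_{p_{0}}^{p_{1}}\frac{A-2p}{f(p)+\alpha}\,dp,\qquad \alpha>0,
\]
so that $H_{\kappa}(\alpha)=\kappa\,G(\alpha)$, and the identity $H_{\kappa}(\alpha_{\kappa})=1$ furnished by Lemma \ref{non-alpha-exist} becomes simply
\[
G(\alpha_{\kappa})=\frac{1}{\kappa}.
\]

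Next I would record the elementary properties of $G$ that are needed. Under either (HP1) or (HP2), Lemma \ref{hp-p-monotone} guarantees $A-2p>0$ on $[p_{0},p_{1}]$, while \eqref{sign-f-hp} gives $f(p)>0$ on $(p_{0},p_{1})$ with $f(p_{0})=f(p_{1})=0$; hence for every $\alpha>0$ the integrand is positive and dominated by $(A-2p_{0})/\alpha$, so that $G(\alpha)$ is finite, positive, continuous and strictly decreasing on $(0,\infty)$. The only feature I shall actually invoke is that $G$ is nonincreasing and that $G(\delta)<\infty$ for each fixed $\delta>0$.

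Finally, since $\alpha_{\kappa}>0$ by Lemma \ref{lem:hp-exist-solu}, it suffices to prove $\limsup_{\kappa\to0^{+}}\alpha_{\kappa}\le0$. I would argue by contradiction: if this failed, there would exist $\delta>0$ and a sequence $\kappa_{n}\to0^{+}$ with $\alpha_{\kappa_{n}}\ge\delta$, and then the monotonicity of $G$ together with the displayed identity would give $1/\kappa_{n}=G(\alpha_{\kappa_{n}})\le G(\delta)<\infty$, contradicting $1/\kappa_{n}\to+\infty$. Therefore $\alpha_{\kappa}\to0$ as $\kappa\to0^{+}$.

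I do not expect any genuine obstacle here: the entire content is the observation that $\kappa$ factors out of \eqref{hp-alpha}, after which the conclusion is forced by the divergence $G(\alpha)\to+\infty$ as $\alpha\to0^{+}$. It is worth emphasizing the contrast with the barotropic case: there $f$ is \emph{negative} on $(v_{0},v_{1})$ with a strictly negative minimum $f(v_{*})$, so the denominator $f+\alpha$ can degenerate and the analogue of $G$ stays bounded unless $\alpha\downarrow-f(v_{*})>0$, which is precisely why $\alpha_{\kappa}$ there tends to the positive value $-f(v_{*})$ rather than to $0$. The sign condition $f>0$ in the present case, guaranteed by the concavity of $f$ together with the non-degeneracy $A-2p>0$ coming from (HP1)/(HP2), is exactly what makes $G$ blow up at $0$ and thus forces $\alpha_{\kappa}\to0$.
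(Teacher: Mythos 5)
Your proof is correct, but it takes a genuinely different and lighter route than the paper's. The paper proves the lemma by constructing an explicit piecewise-linear minorant $\hat L$ of $f$ (with $0\le\hat L\le f$, vanishing at $p_0,p_1$ and peaking at $\hat p=(p_0+p_1)/2$), which yields the logarithmic upper bound $H_\kappa(\alpha)\le \frac{2\kappa A}{s}\ln\bigl(1+\frac{s(p_1-p_0)}{2\alpha}\bigr)=:\hat H_\kappa(\alpha)$; solving $\hat H_\kappa(\hat\alpha_\kappa)=1$ explicitly gives $\hat\alpha_\kappa=\frac{s(p_1-p_0)}{2(e^{s/(2\kappa A)}-1)}$, and the monotonicity of $H_\kappa$ then sandwiches $0<\alpha_\kappa<\hat\alpha_\kappa\to 0$. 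You instead factor $\kappa$ out of \eqref{hp-alpha}, rewrite the determining identity as $G(\alpha_\kappa)=1/\kappa$ with a $\kappa$-independent $G$, and note that $G$ is bounded on every ray $\alpha\ge\delta>0$ (indeed $G(\alpha)\le(A-2p_0)(p_1-p_0)/\alpha$, using $f\ge 0$ from \eqref{sign-f-hp} and $A-2p>0$ from Lemma \ref{hp-p-monotone}), so $G(\alpha_\kappa)=1/\kappa\to\infty$ forces $\alpha_\kappa\to 0$; all the ingredients you invoke ($\alpha_\kappa>0$, monotonicity, finiteness of $G$ away from $0$) are available from Lemmas \ref{hp-p-monotone} and \ref{non-alpha-exist}. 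The trade-off: made quantitative, your crude bound gives only $\alpha_\kappa\le\kappa(A-2p_0)(p_1-p_0)$, linear in $\kappa$, whereas the paper's comparison function captures the true exponential smallness $\alpha_\kappa=O(e^{-c/\kappa})$, which reflects the logarithmic blow-up of $G$ at $\alpha=0$ caused by the simple zeros of $f$ at the endpoints. Since nothing downstream uses a rate --- the proof of Lemma \ref{I} needs only $\alpha_\kappa\to 0$ --- your softer argument fully suffices, and your closing remark correctly isolates the structural reason ($f>0$ on $(p_0,p_1)$ here, versus $\inf f<0$ in the barotropic case) why $\alpha_\kappa$ tends to $0$ in this setting but to the positive value $-f(v_*)$ in Section \ref{Section 3.1}.
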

\begin{proof}
	Let $\hat{p}=\frac{p_{0}+p_{1}}{2}$, define:
	\[
		\hat{L}(p)=
		\begin{cases}s(p-p_{0}),\quad \ \    &
             p_{0}\leq p\leq \hat{p},                           \\
             -s(p-p_{1}), \quad \ \  & \hat{p}\leq p\leq p_{1},
		\end{cases}
	\]
	where
	\begin{equation}
		s:=\frac{f(\hat{p})}{\hat{p}-p_{0}}=-\frac{f(\hat{p})}{\hat{p}-p_{1}}>0
	\end{equation}
	Obviously, $0\leq\hat{L}(p)\leq f(p)$. Then we have
		\begin{align}
			H_{\kappa}(\alpha) & = \int_{p_{0}}^{p_{1}}\frac{\kappa(A-2p)}{f(p)+\alpha}dp  \notag                                     \\
			                   & \leq \int_{p_{0}}^{p_{1}}\frac{\kappa A}{\hat{L}(p)+\alpha}dp    \notag                               \\
			                   & =\left(\int_{p_{0}}^{\hat{p}}+\int_{\hat{p}}^{p_{1}}\right)\frac{\kappa A}{\hat{L}(p)+\alpha}dp \notag\\
			                   & =\frac{2\kappa A}{s}\ln(1+\frac{s}{2\alpha}(p_{1}-p_{0}))                     \notag                   \\
			                   & =:\hat{H}_{\kappa}(\alpha)
		\end{align}
	Let $\hat{H}_{\kappa}(\alpha)=1$, then we have
	\begin{equation}
		\alpha=\frac{s(p_{1}-p_{0})}{2(e^{\frac{s}{2\kappa A}}-1)}=:\hat{\alpha}_{\kappa}
	\end{equation}
	Obviously, $$\lim_{\kappa\to 0}\hat{\alpha}_{\kappa}=0.$$ Then from $0<\alpha_{\kappa}<\hat{\alpha}_{\kappa}$, one obtains the lemma.
\end{proof}

For the step (ii), the following lemma holds:
\begin{lem}
	The limit function $p^{0}$ is a  weak solution to problem (\ref{weak-sol1-hp}), i.e. for any test function $\phi(x)\in C^{\infty}[0,1]$, \eqref{non-weak-sol-hp} holds.
\end{lem}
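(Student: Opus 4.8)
The plan is to follow the proof of Lemma \ref{lem:weak solution of 3.1} from the barotropic case; the only genuinely new feature here is the variable coefficient $A-2p^{\kappa}$ multiplying $\partial_x p^{\kappa}$ on the left-hand side of \eqref{non-first}. The key observation that makes the argument go through unchanged is that this coefficient already carries a total-derivative structure: since $(A-2p)\partial_x p = \partial_x(Ap-p^2)$, the left-hand side of \eqref{non-first} equals $\kappa\,\partial_x\!\big(Ap^{\kappa}-(p^{\kappa})^2\big)$, which is exactly the form one needs in order to integrate by parts against a smooth test function.

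First I would fix the subsequence $\kappa_n\to 0$ along which $p^{\kappa_n}\to p^0$ almost everywhere on $[0,1]$, as furnished by Helly's theorem above. For an arbitrary $\phi\in C^{\infty}[0,1]$, I would integrate the left-hand side of \eqref{non-first} against $\phi$ and integrate by parts, obtaining
\[
\int_{0}^{1}\kappa_n(A-2p^{\kappa_n})\partial_x p^{\kappa_n}\,\phi\,dx
=\kappa_n\big(Ap^{\kappa_n}-(p^{\kappa_n})^2\big)\phi\Big|_{0}^{1}
-\int_{0}^{1}\kappa_n\big(Ap^{\kappa_n}-(p^{\kappa_n})^2\big)\partial_x\phi\,dx.
\]
Because the a priori bound \eqref{non-prior} keeps $p^{\kappa_n}$ confined to $[p_0,p_1]$, the quantity $Ap^{\kappa_n}-(p^{\kappa_n})^2$ is uniformly bounded, so both terms on the right carry an explicit factor $\kappa_n$ and therefore tend to $0$ as $n\to\infty$.

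Next I would use \eqref{non-first} itself to replace the left-hand integrand by $F(p^{\kappa_n},\alpha_{\kappa_n})$, which gives $\int_{0}^{1}F(p^{\kappa_n},\alpha_{\kappa_n})\phi\,dx\to 0$. To pass the limit inside this integral I would invoke dominated convergence: the function $F(p,\alpha)=f(p)+\alpha$ is continuous, $p^{\kappa_n}\to p^0$ almost everywhere, $\alpha_{\kappa_n}\to\alpha_0=0$ by Lemma \ref{limit0}, and the integrand is uniformly bounded since $f$ is bounded on $[p_0,p_1]$, $\alpha_{\kappa_n}$ is bounded, and $\phi$ is continuous on $[0,1]$. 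This yields
\[
\int_{0}^{1}F(p^{0}(x),\alpha_0)\phi(x)\,dx=0,
\]
which is precisely \eqref{non-weak-sol-hp} with $\alpha_0=0$.

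I do not expect any of these steps to pose a serious obstacle; the computation is essentially routine once the total-derivative structure of the principal term is recognized, which is what lets the $(A-2p^\kappa)$ factor be absorbed harmlessly. The one point deserving care — and the place where this subsection's genuinely substantive work is silently used — is that the argument relies on the previously established facts that $\alpha_{\kappa_n}$ converges to $0$ (Lemma \ref{limit0}) and that $p^{\kappa_n}$ is uniformly bounded with uniformly bounded total variation (from \eqref{non-prior} and the monotonicity of $p^\kappa$); without these, neither the integration-by-parts estimate nor the dominated-convergence step would close.
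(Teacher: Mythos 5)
Your proof is correct and takes essentially the same route as the paper's: the identical integration by parts exploiting the total-derivative structure $(A-2p)\partial_x p=\partial_x\big(Ap-p^2\big)$, the same use of the uniform bound $p_0<p^{\kappa_n}<p_1$ so that the explicit factor $\kappa_n$ forces the viscous term to vanish, and the same limit passage in $\int_0^1 F(p^{\kappa_n},\alpha_{\kappa_n})\phi\,dx$ using $\alpha_{\kappa_n}\to\alpha_0=0$ from Lemma \ref{limit0}. Your explicit appeal to dominated convergence merely spells out a step the paper leaves implicit.
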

\begin{proof}
	For any test function $\phi(x)\in C^{\infty}[0,1]$, it holds that
		\begin{align*}
	&\quad \kappa_{n}\int_{0}^{1}(A-2p^{\kappa_{n}})\partial_{x}p^{\kappa_{n}}\phi(x)dx \\
	&=\kappa_{n} A p^{\kappa_{n}}\phi(x)|_{x=0}^{x=1}  -\kappa_{n}\int_{0}^{1}(A-2p^{\kappa_{n}})p^{\kappa_{n}}\partial_{x}\phi(x) dx \\
	&\quad -\kappa_{n}(p^{\kappa_{n}})^{2}\phi(x)|_{x=0}^{x=1}  +\kappa_{n}\int_{0}^{1}(p^{\kappa_{n}})^{2}\partial_{x}\phi(x)dx.
		\end{align*}
	Since $p_0<p^{\kappa_n}<p_1$, as $n \to \infty$, we have $\kappa_{n} \to 0$ and
	\begin{align*}
		\lim_{\kappa_{n} \to 0}
		\kappa_{n}\int_{0}^{1}(A-2p^{\kappa_{n}})\partial_{x}p^{\kappa_{n}}\phi(x)dx =0.
	\end{align*} 
	Furthermore,
	\begin{equation*}
		\lim_{\kappa_{n}\to 0}\int_{0}^{1}F(p^{\kappa_{n}},\alpha_{\kappa_{n}})\cdot \phi (x)dx=\int_{0}^{1}F(p^{0},\alpha_{0})\cdot \phi(x)dx,
	\end{equation*}
	which completes the proof of the lemma.
\end{proof}


For the step (iii), we are going to compute the limit of the following function as $\kappa \to 0+$:
\begin{equation*}
	I_{\kappa}(p):=\frac{X_{\kappa}(p)}{1-X_{\kappa}(p)},\qquad p\in (p_{0},p_{1}).
\end{equation*}
We have the following lemma:
\begin{lem}\label{I}
	For any  $p\in (p_{0},p_{1})$, it holds that
	\begin{equation}\label{I0}
	\lim_{\kappa\to 0+}I_{\kappa}(p)=\frac{A-2p_{0}}{A-2p_{1}}.
	\end{equation}
\end{lem}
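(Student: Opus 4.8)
The plan is to exploit the explicit formula \eqref{hp-X} for $X_{\kappa}$. Setting
\[
N(\alpha):=\int_{p_{0}}^{p}\frac{A-2t}{f(t)+\alpha}\,dt,\qquad
D(\alpha):=\int_{p}^{p_{1}}\frac{A-2t}{f(t)+\alpha}\,dt,
\]
one reads off from \eqref{hp-X} that $X_{\kappa}(p)=\kappa\,N(\alpha_{\kappa})$ and, since $X_{\kappa}(p_{1})=1$, also $1-X_{\kappa}(p)=X_{\kappa}(p_{1})-X_{\kappa}(p)=\kappa\,D(\alpha_{\kappa})$. Thus the common factor $\kappa$ cancels and $I_{\kappa}(p)=N(\alpha_{\kappa})/D(\alpha_{\kappa})$. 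By Lemma \ref{limit0} we have $\alpha_{\kappa}\to0^{+}$ as $\kappa\to0^{+}$, so it suffices to prove that $\lim_{\alpha\to0^{+}}N(\alpha)/D(\alpha)=\frac{A-2p_{0}}{A-2p_{1}}$. Note that $A-2t\ge A-2p_{1}>0$ on $[p_{0},p_{1}]$ by the estimate in the proof of Lemma \ref{hp-p-monotone}, so $N,D>0$.

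The heart of the matter is the behaviour of $N(\alpha)$ and $D(\alpha)$ as $\alpha\to0^{+}$. Recall from \eqref{sign-f-hp} that $f>0$ on $(p_{0},p_{1})$ with $f(p_{0})=f(p_{1})=0$, and that $f$ is smooth and strictly concave, so $f'(p_{0})>0$ and $f'(p_{1})<0$. Hence on $[p_{0},p]$ the integrand of $N$ develops a simple-pole-type singularity only at the lower endpoint $t=p_{0}$ as $\alpha\to0$, and the integrand of $D$ degenerates only at $t=p_{1}$. I would establish the logarithmic asymptotics
\[
\lim_{\alpha\to0^{+}}\frac{N(\alpha)}{\ln(1/\alpha)}=\frac{A-2p_{0}}{f'(p_{0})},\qquad
\lim_{\alpha\to0^{+}}\frac{D(\alpha)}{\ln(1/\alpha)}=\frac{A-2p_{1}}{-f'(p_{1})}.
\]
This is the main technical step and the only delicate point. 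For $N$ I would split $N(\alpha)=\int_{p_{0}}^{p_{0}+\eta}+\int_{p_{0}+\eta}^{p}$; the second piece converges to the finite integral $\int_{p_{0}+\eta}^{p}\frac{A-2t}{f(t)}\,dt$ and is therefore $O(1)$, while the first piece is sandwiched between explicit integrals of the shape $\int_{p_{0}}^{p_{0}+\eta}\frac{c\,dt}{m(t-p_{0})+\alpha}=\frac{c}{m}\ln\frac{m\eta+\alpha}{\alpha}$, obtained by bounding $f(t)$ below and above by $(f'(p_{0})\mp\varepsilon)(t-p_{0})$ and $A-2t$ by $(A-2p_{0})\pm\varepsilon$ on a sufficiently small interval. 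Dividing by $\ln(1/\alpha)$, letting $\alpha\to0$ and then $\varepsilon\to0$ pins down the coefficient $\frac{A-2p_{0}}{f'(p_{0})}$; the analysis of $D$ near $t=p_{1}$ is identical.

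Dividing the two asymptotics then gives
\[
\lim_{\alpha\to0^{+}}\frac{N(\alpha)}{D(\alpha)}
=\frac{(A-2p_{0})\,(-f'(p_{1}))}{(A-2p_{1})\,f'(p_{0})},
\]
so the whole lemma reduces to the identity $f'(p_{0})=-f'(p_{1})$. This is immediate once one observes that $\Phi(p)=\frac12(A-p)^{2}+\frac{\gamma}{\gamma-1}p(A-p)$ is a quadratic polynomial in $p$, whence $f(p)=\Phi(p)-\Phi(p_{0})$ is quadratic with roots $p_{0},p_{1}$; explicitly $f(p)=\frac{\gamma+1}{2(\gamma-1)}(p-p_{0})(p_{1}-p)$, so that $f'(p_{0})=\frac{\gamma+1}{2(\gamma-1)}(p_{1}-p_{0})=-f'(p_{1})$. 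With $-f'(p_{1})=f'(p_{0})$ the ratio collapses to $\frac{A-2p_{0}}{A-2p_{1}}$, which is precisely \eqref{I0}. Everything besides the endpoint asymptotics of the second paragraph is purely algebraic.
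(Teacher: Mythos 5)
Your proof is correct and takes essentially the same route as the paper's: both split the integrals over $(p_0,p)$ and $(p,p_1)$ at a small distance from the endpoints, sandwich $f$ by linear functions there to extract the logarithmic divergences, and combine $\alpha_{\kappa}\to 0^{+}$ (Lemma \ref{limit0}) with the identity $f'(p_{0})=-f'(p_{1})$; your normalization by $\ln(1/\alpha)$ before dividing is only an organizational variant of the paper's $\limsup$/$\liminf$ sandwich in the cutoff parameter $\sigma$. The one point where you go beyond the paper is the explicit observation that $f(p)=\frac{\gamma+1}{2(\gamma-1)}(p-p_{0})(p_{1}-p)$ is a quadratic, which actually \emph{proves} the identity $f'(p_{0})=-f'(p_{1})$ that the paper invokes without justification.
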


\begin{proof}
	Fixed $p \in\left(p_{0}, p_{1}\right)$, then for any sufficiently small $\sigma>0$ such that, with $p^{*}$ satisfies $f'\left(p^{*}\right)=0$,
	\begin{equation*}
	p_{0}+\sigma<\min \left\{p, p^{*}\right\} \leq \max \left\{p, p^{*}\right\}<p_{1}-\sigma,
	\end{equation*}
	it holds that
	\begin{equation*}
	\int_{p_{0}}^{p} \frac{A-2 s}{F\left(p, \alpha_{\kappa}\right)} d p=\left(\int_{p_{0}}^{p_{0}+\sigma}+\int_{p_{0}+\sigma}^{p}\right) \frac{A-2 s}{F\left(p, \alpha_{\kappa}\right)} d p:=J_{1,\sigma,\kappa}+J_{2,\sigma,\kappa}
	\end{equation*}
	\begin{equation*}
	\int_{p}^{p_{1}} \frac{A-2 s}{F\left(p, \alpha_{\kappa}\right)} d p=\left(\int_{p}^{p_{1}-\sigma}+\int_{p_{1}-\sigma}^{p_{1}}\right) \frac{A-2 s}{F\left(p, \alpha_{\kappa}\right)} d p:=J_{3,\sigma,\kappa}+J_{4,\sigma,\kappa},
	\end{equation*}
	where 
	\begin{equation*}
	J_{1,\sigma,\kappa}=\int_{p_{0}}^{p_{0}+\sigma} \frac{A-2 s}{F\left(s, \alpha_{\kappa}\right)} ds,
	\end{equation*}
	\begin{equation*}
	J_{2,\sigma,\kappa}=\int_{p_{0}+\sigma}^{p} \frac{A-2 s}{F\left(s, \alpha_{\kappa}\right)} d s,
	\end{equation*}
	\begin{equation*}
	J_{3,\sigma,\kappa}=\int_{p}^{p_{1}-\sigma} \frac{A-2 s}{F\left(s, \alpha_{\kappa}\right)} ds,
	\end{equation*}
	\begin{equation*}
	J_{4,\sigma,\kappa}=\int_{p_{1}-\sigma}^{p_{1}} \frac{A-2 s}{F\left(s, \alpha_{\kappa}\right)} ds.
	\end{equation*}
	
	Since $f$ is strictly concave in $\left(p_{0}, p_{1}\right)$, then for any $\xi \in\left(p_{0}, p_{0}+\sigma\right)$, we have
	\begin{equation*}
	f^{\prime}\left(p_{0}\right)>f^{\prime}(\xi)>f^{\prime}\left(p_{0}+\sigma\right),\quad \text{for $s\in (p_{0},p_{0}+\sigma)$},
	\end{equation*}
	which implies
	\begin{equation*}
	f^{\prime}\left(p_{0}\right)\left(s-p_{0}\right)>f(s)>f^{\prime}\left(p_{0}+\sigma\right)\left(s-p_{0}\right)>0 .
	\end{equation*}
	
	Therefore, it holds that
	
	\begin{align}\label{I11}
	\int_{p_{0}}^{p_{0}+\sigma} \frac{A-2 s}{f(s)+\alpha_{\kappa}} ds & \leq \int_{p_{0}}^{p_{0}+\sigma} \frac{A-2 p_{0}}{f^{\prime}\left(p_{0}+\sigma\right)\left(s-p_{0}\right)+\alpha_{\kappa}} d s \\\notag
	& =\frac{A-2 p_{0}}{f^{\prime}\left(p_{0}+\sigma\right)} \ln \frac{\sigma f^{\prime}\left(p_{0}+\sigma\right)+\alpha_{\kappa}}{\alpha_{\kappa}}
	\end{align}
	
	and
	
	\begin{align}\label{I12}
	\int_{p_{0}}^{p_{0}+\sigma} \frac{A-2 s}{f(p)+\alpha_{\kappa}} d s & \geq \int_{p_{0}}^{p_{0}+\sigma} \frac{A-2 p}{f^{\prime}\left(p_{0}\right)\left(s-p_{0}\right)+\alpha_{\kappa}} d s \\\notag
	& =\frac{A-2 p_{0}+\frac{2 \alpha_{\kappa}}{f^{\prime}\left(p_{0}\right)}}{f^{\prime}\left(p_{0}\right)} \ln \frac{\sigma f^{\prime}\left(p_{0}\right)+\alpha_{\kappa}}{\alpha_{\kappa}}-\frac{2 \sigma}{f^{\prime}\left(p_{0}\right)} \\\notag
	& >0 .
	\end{align}
	
	Moreover, as $\theta \in\left(p_{0}+\sigma, p\right)$, since $f\left(p^{*}\right) \geq f(\theta) \geq f\left(p_{0}+\sigma\right)$, it holds
	
	\begin{align}\label{I21}
	\int_{p_{0}+\sigma}^{p} \frac{A-2 \theta}{f(\theta)+\alpha_{\kappa}} d \theta &\leq \int_{p_{0}+\sigma}^{p} \frac{A}{f\left(p_{0}+\sigma\right)+\alpha_{\kappa}} d \theta\\\notag
	&=\frac{A\left(p-p_{0}-\sigma\right)}{f\left(p_{0}+\sigma\right)+\alpha_{\kappa}}
	\end{align}
	and
	\begin{align}\label{I22}
	\int_{p_{0}+\sigma}^{p} \frac{A-2 \theta}{f(\theta)+\alpha_{\kappa}} d \theta & \geq \int_{p_{0}+\sigma}^{p} \frac{A-2 \theta}{f\left(p^{*}\right)+\alpha_{\kappa}} d \theta \\\notag
	& =\frac{A\left(p-p_{0}-\sigma\right)}{f\left(p^{*}\right)+\alpha_{\kappa}}-\frac{p^{2}-\left(p_{0}+\sigma\right)^{2}}{f\left(p^{*}\right)+\alpha_{\kappa}} \\\notag
	& >0 .
	\end{align}
	
	Combining \eqref{I11}\eqref{I12}\eqref{I21} and \eqref{I22}, one obtains
	\begin{align}
	& \frac{A-2 p_{0}}{f^{\prime}\left(p_{0}\right)+\sigma} \ln \frac{\sigma f^{\prime}\left(p_{0}+\sigma\right)+\alpha_{\kappa}}{\alpha_{\kappa}}+\frac{A\left(p-p_{0}-\sigma\right)}{f\left(p_{0}+\sigma\right)+\alpha_{\kappa}} \\\notag
	& \geq J_{1,\sigma,\kappa}+J_{2,\sigma,\kappa} \\\notag
	& \geq \frac{A-2 p_{0}+\frac{2 \alpha_{\kappa}}{f^{\prime}\left(p_{0}\right)}}{f^{\prime}\left(p_{0}\right)} \ln \left(1+\frac{\sigma f^{\prime}\left(p_{0}\right)}{\alpha_{\kappa}}\right)-\frac{2 \sigma}{f^{\prime}\left(p_{0}\right)} \\\notag
	& \quad+\frac{A\left(p-p_{0}-\sigma\right)-\left(p^{2}-\left(p_{0}+\sigma\right)^{2}\right)}{f\left(p^{*}\right)+\alpha_{\kappa}} \\\notag
	& >0 .
	\end{align}
	
	Analogously, one obtains that
	
	\begin{align}
	&\frac{A-2\left(p_{1}-\sigma\right)}{f^{\prime}\left(p_{1}\right)} \ln \frac{\alpha_{\kappa}}{-\sigma f^{\prime}\left(p_{1}\right)+\alpha_{\kappa}}+\frac{A\left(p_{1}-\sigma-p\right)}{f\left(p_{1}-\sigma\right)+\alpha_{\kappa}} \\\notag
	& \geq J_{3,\sigma,\kappa}+J_{4,\sigma,\kappa} \\\notag
	& \geq  -\frac{A-2 p_{1}+\frac{2 \alpha_{\kappa}}{f^{\prime}\left(p_{1}-\sigma\right)}}{f^{\prime}\left(p_{1}-\sigma\right)} \ln \frac{-\sigma f^{\prime}\left(p_{1}-\sigma\right)+\alpha_{\kappa}}{\alpha_{\kappa}}\\\notag
	& \quad+\frac{A\left(p_{1}-\sigma-p\right)}{f\left(p^{*}\right)+\alpha_{\kappa}}-\frac{\left(p_{1}-\sigma\right)^{2}-p^{2}}{f\left(p^{*}\right)+\alpha_{\kappa}}-\frac{2 \sigma}{f^{\prime}\left(p_{1}-\sigma\right)} \\\notag
	& >0 \text {. }
	\end{align}
	
	Hence, since $f'(p_{0})=-f'(p_{1})$, applying lemma \ref{limit0}, for given $p \in\left(p_{0}, p_{1}\right)$ and sufficiently small $\sigma>0$, one obtains
	
	$$
	\begin{gathered}
	\lim _{\kappa \rightarrow 0} \sup I_{\kappa}(p)=\lim _{\kappa \rightarrow 0} \sup \frac{\int_{p_{0}}^{p} \frac{A-2 p}{F\left(p, \alpha_{\kappa}\right)}}{\int_{p}^{p_{1}} \frac{A-2 p}{F\left(p, \alpha_{\kappa}\right)}} \leq-\frac{\left(A-2 p_{0}\right) f^{\prime}\left(p_{1}-\sigma\right)}{\left(A-2 p_{1}\right) f^{\prime}\left(p_{0}+\sigma\right)} . \\
	\lim _{\kappa \rightarrow 0} \inf I_{\kappa}(p)=\lim _{\kappa \rightarrow 0} \inf \frac{\int_{p_{0}}^{p} \frac{A-2 p}{F\left(p, \alpha_{\kappa}\right)}}{\int_{p}^{p_{1}} \frac{A-2 p}{F\left(p, \alpha_{\kappa}\right)}} \geq\frac{A-2 p_{0}}{A-2\left(p_{1}-\sigma\right)} .
	\end{gathered}
	$$
	
	Let $\sigma$ goes to 0 , then
	\begin{equation*}
	\frac{A-2 p_{0}}{A-2 p_{1}} \leq \lim _{\kappa \rightarrow 0} \inf I_{\kappa}(p) \leq \lim _{\kappa \rightarrow 0} \sup I_{\kappa}(p) \leq \frac{A-2 p_{0}}{A-2 p_{1}},
	\end{equation*}
	which implies \eqref{I0} and we complete the proof of Lemma \ref{I}.
\end{proof}

Then one can easily obtain the limit of $X_{\kappa}(p)$, that is, for any $p\in(p_{0},p_{1})$,
\begin{equation}\label{hp-shock-position}
	\lim_{\kappa\to 0+}X_{\kappa}(p)=\frac{1}{2}\frac{A-2p_{0}}{A-(p_{0}+p_{1})}=\frac{(\gamma+1)}{2(\gamma-1)}\cdot\frac{\gamma M_{0}^{2}-1}{\gamma M_{0}^{2}+1}=:X_{s},
\end{equation}
which yields that
\begin{equation*}
	\lim_{\kappa\to 0+}\int_{p_{0}}^{p_{1}}|X_{\kappa}(p)-X_{s}|dp=0
\end{equation*}
This means that $\{X_{\kappa}(p)\}$ is a Cauchy sequence in $L^{1}(p_{0},p_{1})$, which implies that $p^{\kappa}(x)$ is also a Cauchy sequence in $L^{1}(0,1)$. Hence, there exists a $p_{*}^{0}(x)\in L^{1}(0,1)$ 
such that
\begin{equation}
\lim_{\kappa\to 0+}\int_{0}^{1}|p^{\kappa}(x)-p_{*}^{0}(x)|dx=0.
\end{equation}
By \eqref{hp-shock-position}, it is easy to further check that $p_{*}^{0}(x)$
has the following form, which is one of the solution within \eqref{shock-solution_hp}:
\begin{equation}\label{p^{0}_{*}}
	p_{*}^{0}(x)=
	\left\{
	\begin{aligned}
		 & p_{0},\quad 0\leq x<X_{s}, \\
		 & p_{1},\quad X_{s}<x\leq 1
	\end{aligned}
	\right.
\end{equation}
Therefore, for any limit function $p^{0}(x)$ of the convergent subsequence $\{p^{\kappa_{n}}(x)\}$ (see Figure 3.1), it holds that
\begin{equation*}
	p^{0}(x)=p^{0}_{*}(x),\qquad a.e.\quad x\in [0,1].
\end{equation*}

\begin{figure}[htbp]
	\centering
	\includegraphics[width=9.5cm,height=6.1cm]{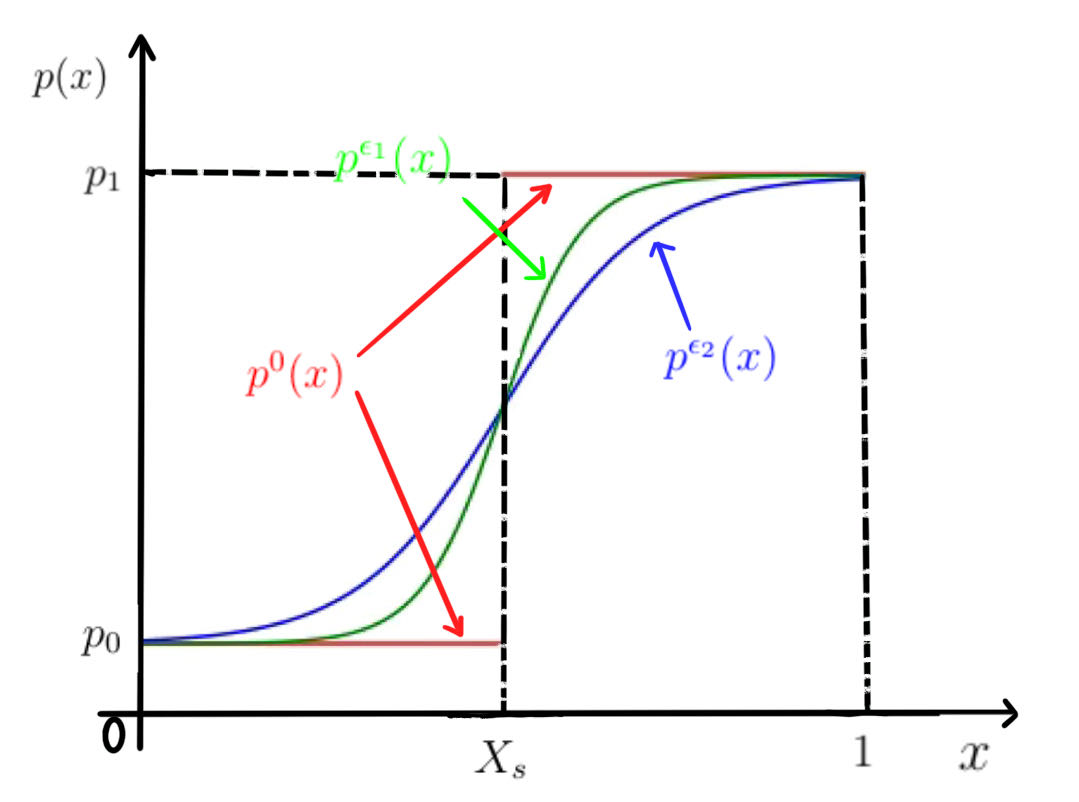}
	\caption{The pressure functions $p^{\epsilon}(x)$ for $\epsilon>0$ and their limit as $\epsilon \to 0+$.}
\end{figure}

Concluding the above argument, one obtains the following lemma:
\begin{lem}\label{lem:hp}
	Suppose  that either (HP1) or (HP2) in Theorem \ref{thm: non-heat} holds.
	Then for any $\kappa>0$, there exists a unique $C^2$ solution $(p^{\kappa},\alpha_{\kappa})$ to the boundary value problem \eqref{non-2th-p} and \eqref{hp-bd}.
	Moreover, the solution sequence $\{p^{\kappa}\}$ converges to $p_{*}^{0}$ in $  L^{1}(0,1) $ as $\kappa \to 0$, where $p_{*}^{0}$ is defined as \eqref{p^{0}_{*}}. 
\end{lem}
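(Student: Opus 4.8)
The plan is to assemble the lemmas established above in Subsection~\ref{Section 3.2} into the two assertions of the statement. For existence and uniqueness, I would first recall that, under (HP1) or (HP2), Lemma~\ref{hp-p-monotone} forces any $C^2$ solution $p^\kappa$ of the second-order problem \eqref{non-2th-p}--\eqref{hp-bd} to be strictly increasing with $p_0 < p^\kappa < p_1$, so that the coefficient $A - 2p^\kappa$ of the principal part stays strictly positive and the equation is genuinely non-degenerate. This monotonicity makes $p^\kappa$ invertible, and integrating \eqref{non-2th-p} once shows that solving \eqref{non-2th-p}--\eqref{hp-bd} is equivalent to solving the first-order problem \eqref{non-first}--\eqref{hp-bd}, i.e.\ to the inverse-function problem \eqref{non-ODE}. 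Lemma~\ref{lem:hp-exist-solu} (via Lemma~\ref{non-alpha-exist}, which produces the unique admissible $\alpha_\kappa > 0$) then yields a unique pair $(p^\kappa, \alpha_\kappa)$; reading $X_\kappa$ off \eqref{hp-X} and inverting recovers a $C^2$ solution of the original problem, with uniqueness inherited from the uniqueness of $\alpha_\kappa$.

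For the convergence, my plan follows the three steps of Subsection~\ref{Section 3.1.3}. First, the a priori bound \eqref{non-prior} together with monotonicity gives the uniform bound $T.V.\, p^\kappa = p_1 - p_0$, so Helly's selection theorem extracts a subsequence converging a.e.\ to some $BV$ function $p^0$. Next, Lemma~\ref{limit0} pins down $\alpha_0 = \lim_{\kappa\to0}\alpha_\kappa = 0$, and the weak-form passage to the limit shows that $p^0$ solves $F(p^0, 0) = 0$ in the sense \eqref{non-weak-sol-hp}; since $f(p) > 0$ on $(p_0, p_1)$ by \eqref{sign-f-hp}, the only admissible values are $p_0$ and $p_1$, so $p^0$ is of the shock form \eqref{shock-solution_hp}. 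Finally, Lemma~\ref{I} and the identity \eqref{hp-shock-position} compute the limiting shock location $X_s$ explicitly and independently of $p$, which forces $p^0 = p_*^0$ with $p_*^0$ given by \eqref{p^{0}_{*}}.

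The step I expect to carry the real content is the upgrade from the subsequential a.e.\ convergence supplied by Helly to genuine $L^1$ convergence of the \emph{whole} family $\{p^\kappa\}$. The device is that \eqref{hp-shock-position} gives $X_\kappa(p) \to X_s$ for every $p \in (p_0, p_1)$, so by dominated convergence (the $X_\kappa$ are uniformly bounded in $[0,1]$) one has $X_\kappa \to X_s$ in $L^1(p_0, p_1)$, and in particular $\{X_\kappa\}$ is Cauchy there. Because each $p^\kappa$ is monotone with $p^\kappa(0) = p_0$, $p^\kappa(1) = p_1$, the area-between-graphs identity equates the $L^1$ distance of two such monotone functions on $(0,1)$ with the $L^1$ distance of their inverses on $(p_0, p_1)$; hence the Cauchy property transfers to $\{p^\kappa\}$ in $L^1(0,1)$ and produces a single limit $p_*^0$ rather than a merely subsequential one. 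The care needed lies precisely in this inverse-to-direct transfer and in verifying that the resulting $L^1$ limit agrees a.e.\ with the shock profile \eqref{p^{0}_{*}} whose jump sits exactly at $x = X_s$; combining this with the existence part then completes the proof.
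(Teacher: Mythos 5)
Your proposal is correct and follows essentially the same route as the paper: Lemma \ref{lem:hp} is obtained there precisely by assembling Lemmas \ref{hp-p-monotone}, \ref{non-alpha-exist}, \ref{lem:hp-exist-solu}, \ref{limit0} and \ref{I}, and the upgrade from subsequential a.e.\ convergence to full $L^1(0,1)$ convergence is made via exactly your device, namely deducing from \eqref{hp-shock-position} that $\{X_\kappa\}$ is Cauchy in $L^1(p_0,p_1)$ and transferring this to $\{p^\kappa\}$ through the monotone inverse functions. The only cosmetic difference is that you spell out the dominated-convergence step and the area-between-graphs identity, which the paper leaves implicit.
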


It is obvious that Theorem \ref{thm: non-heat} is a direct consequence of Lemma \ref{lem:hp}.

\section{Asymptotic Behaviors as the Temperature-Depending Viscosity Vanishes}

This section is devoted to analyze the asymptotic behaviors of the steady shock solutions for fluids with temperature-depending viscosity.
Again, we are going to follow the techniques in \cite{FangZhao2021CPAA} to prove Theorem \ref{thm: isen-viscous} for barotropic gases and Theorem \ref{thm: non-viscous} for polytropic gases, respectively.

\subsection{The barotropic gases}\label{Section 4.1}

In this subsection, we firstly investigate the case of barotropic gases and deal with the problem [{\bf VB}].
Following the ideas in \cite{FangZhao2021CPAA}, the boundary value problem \eqref{isen-viscous-eq} and \eqref{isen-viscous-bd} will be reformulated as a problem for an ordinary equation of first order with an unknown parameter, which is studied carefully to lead us to Theorem \ref{thm: isen-viscous}.

\subsubsection{Reformulation of the problem  [{\bf VB}]}

Let $U=U^\mu(x):=(u^{\mu}(x),\rho^{\mu}(x),p^{\mu}(x))$, with $ p^{\mu}(x) = (\rho^{\mu}(x))^{\gamma} $, be a solution to the system \eqref{isen-viscous-eq} for $\mu>0$,
then it holds that, by the assumption \eqref{rho-velocity},
\begin{equation}\label{rho-u-mu}
\rho^{\mu}=\frac{1}{u^{\mu}}.
\end{equation}
Substituting (\ref{rho-u-mu}) into the second equation in the system \eqref{isen-viscous-eq}, one obtains that
\begin{equation}\label{mu-isen-first-order}
\partial_{x} g(u^{\mu})=\mu\partial_{x}((u^{\mu})^{-\delta}\partial_{x}u^{\mu})
\end{equation}
where  
\begin{equation}\label{vb-g}
	g(u) \defs u+u^{-\gamma}.
\end{equation}
Thus, the BVP \eqref{isen-viscous-eq} and \eqref{isen-viscous-bd} is reduced to a boundary value problem of the nonlinear ordinary differential equation \eqref{mu-isen-first-order} of second order with the boundary conditions \eqref{isen-viscous-bd}.

Let $f(u):=g(u)-g(q_{0})$ be a function defined in $ (0,\infty) $, which is smooth and strictly convex. Moreover, by the Rankine-Hugoniot conditions \eqref{R-H_heat}, it holds that
\begin{equation}
	f(q_{0})=f(q_{1})=0,
\end{equation}
which implies that
\begin{equation}\label{isen-mu-sign-f}
f(u)<0, \quad \text{for} \quad \text{any} \quad u\in (q_{1},q_{0}).
\end{equation}
It can be also checked that there exists a unique $\displaystyle q_{*}:=\gamma^{\frac{1}{1+\gamma}} \in (q_{1},q_{0}) $ such that $f'(q_{*})=0$, and
\begin{equation}\label{vg_inf_f}
	\inf_{q_{1}<u<q_{0}} f(u) = f(q_{*}) <0.
\end{equation}
Integrating the equation (\ref{mu-isen-first-order}) from $0$ to $x$, one obtains 
\begin{equation}\label{isen-mu-first-order}
\mu (u^{\mu})^{-\delta}\partial_{x}u^{\mu}=F(u^{\mu},\alpha_{\mu}),
\end{equation}
where
\begin{equation}
F(u,\alpha):=f(u)+\alpha
\end{equation}
and $\alpha_{\mu}:=\mu q^{-\delta}_{0}\partial_{x}u^{\mu}(0)$ is an unknown constant which should be determined together with $u^{\mu}$ by the boundary conditions
\begin{equation}\label{insen-tem-first-bd}
	u^{\mu}(0)=q_{0}, \qquad u^{\mu}(1)=q_{1}.
\end{equation}

As $\mu \to 0$, the equation \eqref{mu-isen-first-order} formally converges to
\begin{equation}\label{weak-sol-vb}
	F(u^{0}(x),\alpha_{0})=0.
\end{equation}
The weak solution $(u^{0},\alpha_{0})$ to \eqref{weak-sol-vb} is defined in the sense that, for any test functions $\phi(x)\in C^{\infty}[0,1]$, it holds that
\begin{equation}\label{isen-weak-sol-vb}
	\int_{0}^{1}F(u^{0}(x),\alpha_{0})\phi(x)dx = 0.
\end{equation}
Obviously, for any $ 0 < x_{s} <1 $, the weak solutions $(u^{0}(x;x_{s}), \alpha_{0})$ associating with the shock solutions \eqref{shock-solution_vis} satisfy that 
\begin{equation}\label{shock-solution_vb}
	\alpha_{0} = 0,\quad\text{ and }\quad
	u^{0}(x;x_{s}):=
	\left\{
	\begin{aligned}
	q_{0}, & \quad 0\leq x < x_{s},  \\
	q_{1}, & \quad x_{s} < x \leq 1.
	\end{aligned}
	\right.
\end{equation}

Thus, the singular limit problem [{\bf VB}] is reformulated as the following problem.

\vskip 5px

\underline{Problem [{\bf VB-R}]}:

\begin{quotation}
	
	Let $\mu>0$. Try to find a solution $(u^{\mu},\alpha_{\mu})$ satisfying the equation \eqref{isen-mu-first-order} and the boundary conditions \eqref{insen-tem-first-bd}.
	
	Furthermore, find shock solutions, among weak solutions $(u^{0}(x;x_{s}), \alpha_{0})$ in \eqref{shock-solution_vb} to the system \eqref{weak-sol-vb}, which could be the limit of $(u^{\mu},\alpha_{\mu})$ ( or its subsequences ) as $\mu\sTo 0$. \qed
\end{quotation}

\vskip 5px


Therefore the aim of this subsection is to deal with the reformulated problem [{\bf VB-R}]. 
The existence of the solutions $(u^{\mu},\alpha_{\mu})$ for any $\mu>0$ will be established, and it will be proved that, as $\mu\sTo 0$, they converge to a shock solution among shock solutions $(u^{0}(x;x_{s}), \alpha_{0})$ in \eqref{shock-solution_vb} to the system \eqref{weak-sol-vb}.




\subsubsection{Existence of the solution $(u^{\mu},\alpha_{\mu})$ to \eqref{isen-mu-first-order} and \eqref{insen-tem-first-bd}}

Analogous to the argument for barotropic gases with heat conductivity in Section \ref{Section 3.1.2}, the existence of the solution $(u^{\mu},\alpha_{\mu})$ to the problem to \eqref{isen-mu-first-order} and \eqref{insen-tem-first-bd} will be established by solving the problem for the inverse function of $ u^{\mu} $, and its existence is a direct consequence of the following lemma asserting the monotonicity of $ u^{\mu} $.

\begin{lem}\label{isen-vis-prior-hopf}
	Let $\mu>0$ and $u^{\mu}\in C^{2}(0,1)\bigcap C^{1}[0,1]$ be a solution to the boundary value problem \eqref{isen-mu-first-order} with the boundary condition \eqref{insen-tem-first-bd}. Then it holds that
	\begin{equation}\label{isen-mu-prior-estimate}
		q_{1}\leq u^{\mu}\leq q_{0}, \quad  \text{for any} \quad x\in [0,1].
	\end{equation}
	
	Moreover, there holds
	\begin{equation}\label{epsilon-isen-hopf-vb}
	\partial_{x}u^{\mu}(x)<0, \quad \text{for any} \quad x\in [0,1].
	\end{equation}
\end{lem}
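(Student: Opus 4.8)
The plan is to mirror the proof of Lemma~\ref{isen-heat-prior-hopf} for the heat-conductive barotropic case, with the roles of the endpoints and the direction of monotonicity reversed: here $u^{\mu}$ runs from the larger value $q_{0}$ at $x=0$ down to the smaller value $q_{1}$ at $x=1$, whereas there $v^{\kappa}$ increased. First I would establish the a priori bounds \eqref{isen-mu-prior-estimate} by a maximum-principle argument, and then deduce the strict monotonicity \eqref{epsilon-isen-hopf-vb} by combining the Hopf lemma at an endpoint with the first-order relation \eqref{isen-mu-first-order}.

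For the bounds, I would expand \eqref{mu-isen-first-order} into the non-divergence form
\begin{equation*}
  \mu (u^{\mu})^{-\delta}\partial_{xx}u^{\mu}
  -\mu\delta (u^{\mu})^{-\delta-1}(\partial_{x}u^{\mu})^{2}
  -g'(u^{\mu})\partial_{x}u^{\mu}=0,
\end{equation*}
and regard it, for the given solution, as a linear second-order operator $a(x)\partial_{xx}+b(x)\partial_{x}$ acting on $u^{\mu}$, with $a(x)=\mu(u^{\mu})^{-\delta}$ and $b(x)$ collecting the remaining coefficients. Since the formulation \eqref{rho-u-mu} forces $u^{\mu}>0$, the leading coefficient $a(x)$ is strictly positive, so no degeneracy occurs (in contrast to the polytropic heat case of Lemma~\ref{hp-p-monotone}, where $A-2p$ may vanish); moreover $b$ is bounded because $u^{\mu}\in C^{1}[0,1]$. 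As the operator carries no zeroth-order term, the weak maximum principle places the extrema of $u^{\mu}$ on the boundary $\{0,1\}$, which gives $q_{1}\le u^{\mu}\le q_{0}$ in view of \eqref{insen-tem-first-bd}; the strong maximum principle then rules out interior extrema unless $u^{\mu}$ is constant, which is impossible since $q_{0}\neq q_{1}$, so in fact $q_{1}<u^{\mu}<q_{0}$ on $(0,1)$.

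For the monotonicity I would first apply the Hopf lemma at the boundary maximum $x=0$ (equivalently at the boundary minimum $x=1$): since $u^{\mu}$ is nonconstant and attains its maximum $q_{0}$ at $x=0$, the outward normal derivative there is strictly positive, i.e. $\partial_{x}u^{\mu}(0)<0$. By the definition $\alpha_{\mu}=\mu q_{0}^{-\delta}\partial_{x}u^{\mu}(0)$ this yields $\alpha_{\mu}<0$. Now I would invoke the first-order equation \eqref{isen-mu-first-order}: since \eqref{isen-mu-sign-f} gives $f(u^{\mu})\le 0$ on $[q_{1},q_{0}]$ with equality only at the endpoints, the quantity $F(u^{\mu},\alpha_{\mu})=f(u^{\mu})+\alpha_{\mu}$ is strictly negative on all of $[0,1]$ (strictly negative $f$ plus negative $\alpha_{\mu}$ in the interior; $f=0$ but $\alpha_{\mu}<0$ at the endpoints). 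Dividing by the strictly positive factor $\mu(u^{\mu})^{-\delta}$ then shows $\partial_{x}u^{\mu}<0$ throughout $[0,1]$, which is precisely \eqref{epsilon-isen-hopf-vb}.

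The step I expect to require the most care is the sign bookkeeping rather than any genuine analytic difficulty: one must confirm $q_{1}<q_{0}$ so that the maximum sits at $x=0$, apply Hopf with the correct outward-normal orientation to fix the sign of $\alpha_{\mu}$, and then verify that $F<0$ persists up to and including the endpoints. Unlike Lemma~\ref{hp-p-monotone}, there is no degeneracy of the principal coefficient to exclude here, so once $\alpha_{\mu}<0$ is secured the strict monotonicity follows immediately from \eqref{isen-mu-first-order}, and there is no need for the subinterval bootstrapping employed in Lemma~\ref{isen-heat-prior-hopf}.
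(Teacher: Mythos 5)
Your proof is correct, and its second half coincides with the paper's: Hopf's lemma at the endpoint gives $\partial_{x}u^{\mu}(0)<0$, hence $\alpha_{\mu}<0$ by the definition of $\alpha_{\mu}$, and then \eqref{isen-mu-sign-f} together with \eqref{isen-mu-prior-estimate} forces $F(u^{\mu},\alpha_{\mu})=f(u^{\mu})+\alpha_{\mu}<0$ on all of $[0,1]$, so \eqref{epsilon-isen-hopf-vb} follows upon dividing \eqref{isen-mu-first-order} by the positive factor $\mu(u^{\mu})^{-\delta}$. Where you genuinely differ is the a priori bound \eqref{isen-mu-prior-estimate}: the paper obtains it not from the maximum principle but by applying the elementary first-order ODE comparison result of the appendix (Lemma~\ref{lemma2}, with $A(u)=\mu u^{-\delta}$ and right-hand side $f(u)+\alpha_{\mu}$), which simultaneously yields that $u^{\mu}$ is non-increasing with no ellipticity input at all; you instead expand \eqref{mu-isen-first-order} into non-divergence form, freeze coefficients along the given solution, and invoke the weak and strong maximum principles. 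Your route is sound --- $u^{\mu}>0$ on the compact interval makes the principal coefficient $a(x)=\mu(u^{\mu})^{-\delta}$ uniformly positive and $u^{\mu}\in C^{1}[0,1]$ keeps $b$ bounded --- and it has the mild advantage of delivering the strict interior bounds $q_{1}<u^{\mu}<q_{0}$ that the Hopf lemma needs anyway, a step the paper leaves implicit. What the paper's choice buys is uniformity across its four settings: the same Lemma~\ref{lemma2} is reused verbatim in the polytropic heat-conductive case (Lemma~\ref{hp-p-monotone}), where the principal coefficient $A-2p$ may degenerate and a purely elliptic argument needs the extra hypotheses (HP1)/(HP2) from the outset, whereas your maximum-principle argument exploits the nondegeneracy special to the barotropic viscous setting, as you correctly observe. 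One small point worth making explicit in your write-up: a $C^{2}(0,1)\cap C^{1}[0,1]$ solution of the first-order problem \eqref{isen-mu-first-order} satisfies the second-order equation \eqref{mu-isen-first-order} only after differentiation in $(0,1)$, and it is to this differentiated form that both your maximum principle and the paper's Hopf lemma are actually applied.
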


\begin{proof}
	Applying lemma \ref{lemma2} to \eqref{isen-mu-first-order}, one obtains that for any $\mu>0$, $u^{\mu}$ is non-increasing such that \eqref{isen-mu-prior-estimate} holds. 
	Moreover, by Hopf lemma, it holds that
\begin{equation}
	\partial_{x}u^{\mu}(0)<0,\qquad \partial_{x}u^{\mu}(1)<0,
\end{equation}
which implies
\begin{equation}\label{isen-mu-sign-alpha}
	\alpha_{\mu}<0.
\end{equation}
Then by \eqref{isen-mu-sign-f}, the equation \eqref{isen-mu-first-order}, the estimates \eqref{isen-mu-prior-estimate} and \eqref{isen-mu-sign-alpha}, one immediately obtains that \eqref{epsilon-isen-hopf-vb} holds.
\end{proof}

By \eqref{epsilon-isen-hopf-vb}, $u^\mu$ is strictly decreasing in $[0,1]$ so that its  inverse function exists, which will be denoted by $x=X_{\mu}(u)$ for $ u\in[q_1,q_0] $. Then $X_{\mu}$ satisfies the following problem:
\begin{equation}\label{isen-tem-ODE}
	\left\{
	\begin{aligned}
		& \frac{d X_{\mu}(u)}{du}=\frac{\mu}{u^{\delta}F(u,\alpha_{\mu})}, \\
		& X_{\mu}(q_{0})=0, \quad X_{\mu}(q_{1})=1.
	\end{aligned}
	\right.
\end{equation}
Obviously, solving the boundary value problem \eqref{isen-mu-first-order} and \eqref{insen-tem-first-bd} is equivalent to solving the problem \eqref{isen-tem-ODE}.

It can be easily seen that the solution to \eqref{isen-tem-ODE} can be written as
\begin{equation}\label{representation of X_mu}
	X_{\mu}(u)=-\int_{u}^{q_{0}}\frac{\mu}{v^{\delta}F(v,\alpha_{\mu})}dv
\end{equation}
with the unknown constant $\alpha_{\mu}<0$ being determined by
\begin{equation}\label{vb-alpha}
	1=-\int_{q_{1}}^{q_{0}}\frac{\mu}{u^{\delta}F(u,\alpha_{\mu})}du.
\end{equation}
Therefore, it suffices to show the existence of the solution $ \alpha_{\mu} $ to the equation \eqref{vb-alpha} to establish the existence of the solution to the problem \eqref{isen-tem-ODE}, which is the consequence of the following lemma.

\begin{lem}\label{lem:vb-alpha}
	For any $\mu>0$, there exists a unique constant $\alpha_{\mu}<0$ such that
	\begin{equation}
		H_{\mu}(\alpha_{\mu})=1,
	\end{equation}
	where
	\begin{equation}
		H_{\mu}(\alpha):=-\int_{q_{1}}^{q_{0}}\frac{\mu}{u^{\delta}F(u,\alpha)}du,\qquad \alpha<0.
	\end{equation}
\end{lem}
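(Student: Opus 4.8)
The plan is to mirror the argument used for the heat-conductive barotropic case (Lemma \ref{exist-alpha-isen}), exploiting monotonicity of $H_\mu$ in $\alpha$ together with the two limiting behaviors at the endpoints of the admissible parameter range. First I would pin down the correct domain for $\alpha$. By Lemma \ref{isen-vis-prior-hopf} we have $\alpha_\mu<0$, and since the viscous solution must satisfy $\mu(u^\mu)^{-\delta}\partial_x u^\mu=F(u^\mu,\alpha_\mu)<0$ throughout $[0,1]$ (as $\partial_x u^\mu<0$ and $u^\mu>0$), we need $F(u,\alpha)=f(u)+\alpha<0$ for all $u\in(q_1,q_0)$; combined with $f(u)<0$ on this interval and $\inf f=f(q_*)<0$ from \eqref{vg_inf_f}, the natural domain is $\alpha\in(f(q_*),0)$, where the integrand $-\mu/(u^\delta F(u,\alpha))$ stays positive and finite. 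On this interval $F(u,\alpha)<0$ is bounded away from zero uniformly in $u$, so $H_\mu$ is well-defined and, by differentiating under the integral sign, strictly increasing in $\alpha$ (since increasing $\alpha$ makes $F$ less negative, i.e. $|F|$ smaller, enlarging the positive integrand). Thus $H_\mu$ is continuous and strictly monotone on $(f(q_*),0)$.

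Next I would establish the two boundary limits that straddle the value $1$. As $\alpha\to 0^-$, the integrand blows up near $u=q_0$ and $u=q_1$, where $f$ vanishes to first order; since $f'(q_0)\neq 0\neq f'(q_1)$ and $f(u)+\alpha\to f(u)$ which has simple zeros at the endpoints, the integral $H_\mu(\alpha)$ diverges to $+\infty$. Conversely, as $\alpha\to f(q_*)^+$, the denominator $f(u)+\alpha$ approaches zero only at the single interior point $u=q_*$ where $f$ attains its minimum; but there $f$ is quadratically flat (a nondegenerate critical point, $f''(q_*)>0$), so the singularity is like $\int (u-q_*)^{-2}\,du$ \emph{if} $\alpha=f(q_*)$ exactly — however at the limit $\alpha\to f(q_*)^+$ one checks the integral stays finite and in fact can be made smaller than $1$ for the relevant $\mu$, or more simply one uses the explicit lower linear minorant of $f$ (as in Lemma \ref{exist-alpha-isen}, via a piecewise-linear $\hat L(u)\geq f(u)$) to bound $H_\mu$ from above by an elementary logarithmic expression that is strictly less than $1$ for a suitable $\alpha$. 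By the intermediate value theorem and strict monotonicity, there is then a unique $\alpha_\mu\in(f(q_*),0)$ with $H_\mu(\alpha_\mu)=1$.

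The main obstacle I anticipate is the careful analysis of the limit $\alpha\to f(q_*)^+$: one must verify that the near-critical-point contribution to the integral does not already exceed $1$, which requires controlling the local behavior of $f$ at $q_*$ against the weight $u^{-\delta}$. The cleanest route, which avoids delicate singular-integral estimates, is the comparison-function method of Lemma \ref{exist-alpha-isen}: construct a piecewise-linear upper bound $\hat L(u)$ for $f$ with $\hat L(q_*)=f(q_*)$ and straight segments to the endpoints, so that
\begin{equation*}
	H_\mu(\alpha)\leq -\int_{q_1}^{q_0}\frac{\mu}{u^{\delta}(\hat L(u)+\alpha)}\,du,
\end{equation*}
and the right-hand side is an explicit integral whose value can be driven below $1$ by choosing $\alpha$ close to $f(q_*)$; the factor $u^{-\delta}$ only contributes a bounded positive multiplier on the compact interval $[q_1,q_0]$ and does not affect the qualitative estimates. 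Combining this upper estimate near $\alpha=f(q_*)$ with the divergence $H_\mu\to+\infty$ as $\alpha\to 0^-$ and strict monotonicity completes the existence and uniqueness of $\alpha_\mu$.
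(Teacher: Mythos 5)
There is a genuine gap, and it comes from transplanting the sign structure of the heat-conduction case (Lemma \ref{exist-alpha-isen}) into the viscous case without adjusting it. Your restriction of the domain to $\alpha\in(f(q_*),0)$ is unfounded: here $f\le 0$ on $[q_1,q_0]$ by \eqref{isen-mu-sign-f}, so for \emph{every} $\alpha<0$ one has $F(u,\alpha)=f(u)+\alpha\le\alpha<0$, and $H_\mu$ is well-defined, positive and finite on all of $(-\infty,0)$. In particular nothing singular happens at $\alpha=f(q_*)$: at that value the denominator equals $f(u)+f(q_*)\le f(q_*)<0$, bounded away from zero, so your claim that ``at the limit $\alpha\to f(q_*)^+$'' the denominator nearly vanishes at $u=q_*$ (and your $\int(u-q_*)^{-2}du$ heuristic) is simply false. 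The threshold you have in mind, where $f(v)+\alpha$ genuinely vanishes at the interior critical point, is $\alpha=-f(v_*)>0$ in the heat-conduction problem (where $F$ must be \emph{positive}); in the present problem $F$ must be negative, $-f(q_*)>0$ lies outside the admissible range, and no such threshold exists. Worse, the restriction actually destroys existence for large $\mu$: by your own monotonicity, $\inf_{\alpha\in(f(q_*),0)}H_\mu(\alpha)=\lim_{\alpha\to f(q_*)^+}H_\mu(\alpha)$, which is a finite number proportional to $\mu$ and exceeds $1$ once $\mu$ is large, so no root would exist in $(f(q_*),0)$ even though the lemma asserts one for every $\mu>0$. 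For the same reason your comparison integral with the piecewise-linear majorant $\hat L$ cannot be ``driven below $1$ by choosing $\alpha$ close to $f(q_*)$'': it tends to a finite limit proportional to $\mu$, not to something small.

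The repair is much simpler than what you attempt, and it is what the paper does: the second endpoint is $\alpha\to-\infty$, not $\alpha\to f(q_*)^+$. Since $\abs{F(u,\alpha)}=-f(u)-\alpha\ge\abs{\alpha}$ and $u^{\delta}\ge q_1^{\delta}$ on $[q_1,q_0]$, one has
\begin{equation*}
	0<H_{\mu}(\alpha)\le \frac{\mu\,(q_{0}-q_{1})}{q_{1}^{\delta}\,\abs{\alpha}}\longrightarrow 0
	\qquad\text{as }\alpha\to-\infty,
\end{equation*}
with no comparison function needed. Combined with the parts of your argument that are correct and coincide with the paper --- strict monotonicity of $H_\mu$ in $\alpha$ on $(-\infty,0)$, and the divergence $H_\mu(\alpha)\to+\infty$ as $\alpha\to 0^-$, which the paper obtains from the convexity bound $0>f(u)>f'(q_{1})(u-q_{1})$ and an explicit logarithm --- the intermediate value theorem yields a unique $\alpha_\mu\in(-\infty,0)$ solving \eqref{vb-alpha} for \emph{every} $\mu>0$, as required.
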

\begin{proof}
	Obviously, $H_\mu(\alpha)$ is a strictly increasing, continuous function and $$\displaystyle \lim_{\alpha \to -\infty}H_{\mu}(\alpha)=0.$$ 
	We claim that
	\begin{equation}\label{cc}
		\lim_{\alpha \to 0-}H_{\mu}(\alpha)=+\infty.
	\end{equation}
	Indeed, since $f$ is convex and $u$ is bounded,  we have $0>f(u)>f'(q_{1})(u-q_{1})$, which implies
	\begin{equation}
		\begin{aligned}
			H_{\mu}(\alpha) 
			&=\int_{q_{1}}^{q_{0}}\frac{\mu}{-u^{\delta}(f(u)+\alpha)}du\\
			&>\int_{q_{1}}^{q_{0}}\frac{\mu}{-q_{1}^{\delta}(f'(q_{1})(u-q_{1})+\alpha)}du \\
			& =-\frac{\mu}{q_{1}^{\delta}}\frac{1}{f'(q_{1})}\ln\frac{f'(q_{1})(q_{0}-q_{1})+\alpha}{\alpha}.
		\end{aligned}
	\end{equation}
	Therefore \eqref{cc} holds and by the continuity and the monotonicity of $H_{\mu}(\alpha)$, we complete the proof of this lemma.
\end{proof}

By Lemma \ref{lem:vb-alpha}, one immediately obtains the following lemma.
\begin{lem}\label{lem:vb-exist-solu}
	Let $\mu>0$. There exists a unique solution $(u^{\mu},\alpha_{\mu})$, with $ \alpha_{\mu} < 0 $, satisfying the equation \eqref{isen-mu-first-order} and the boundary conditions \eqref{insen-tem-first-bd}.
	
\end{lem}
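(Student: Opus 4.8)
The plan is to regard $H_\mu$ as a function of the single real parameter $\alpha$ on the interval $(-\infty,0)$ and to establish three facts: that $H_\mu$ is well defined and positive there, that it is continuous and strictly increasing, and that it takes every value in $(0,+\infty)$. The lemma then follows immediately from the intermediate value theorem together with strict monotonicity, in complete analogy with the barotropic heat-conduction case of Lemma~\ref{exist-alpha-isen}. For the well-definedness, I would first note that for every $\alpha<0$ the integrand is continuous on the closed interval $[q_1,q_0]$: by \eqref{isen-mu-sign-f} one has $f(u)<0$ on $(q_1,q_0)$ while $f(q_0)=f(q_1)=0$, so $F(u,\alpha)=f(u)+\alpha\le\alpha<0$ throughout $[q_1,q_0]$; hence $u^\delta F(u,\alpha)$ never vanishes and, because of the minus sign in front, $H_\mu(\alpha)>0$.

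Writing $H_\mu(\alpha)=\int_{q_1}^{q_0}\frac{\mu}{u^\delta\,(-f(u)-\alpha)}\,du$ with a positive denominator, strict monotonicity is transparent: for each fixed $u$ the quantity $-f(u)-\alpha$ strictly decreases as $\alpha$ increases while staying positive, so the integrand strictly increases in $\alpha$, and integration preserves this. Continuity (indeed smoothness) in $\alpha$ follows by differentiation under the integral sign, the domination being uniform on compact $\alpha$-subintervals of $(-\infty,0)$. The limit $\lim_{\alpha\to-\infty}H_\mu(\alpha)=0$ is then immediate from the crude bound $-f(u)-\alpha\ge-\alpha$, which gives $0<H_\mu(\alpha)\le\frac{\mu}{-\alpha}\int_{q_1}^{q_0}u^{-\delta}\,du\to0$.

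The delicate point, which I expect to be the real obstacle, is the divergence $\lim_{\alpha\to0^-}H_\mu(\alpha)=+\infty$: as $\alpha\uparrow0$ the denominator $-f(u)-\alpha$ degenerates near the endpoint $u=q_1$, where $f$ vanishes, and one must check that this endpoint singularity is strong enough to force divergence. The clean device is a tangent-line comparison. Since $f$ is strictly convex with $f(q_1)=0$ and $f'(q_1)<0$ (because $q_1<q_*$, see \eqref{vg_inf_f}), its tangent at $q_1$ lies strictly below the graph, i.e.\ $f(u)>f'(q_1)(u-q_1)$ on $(q_1,q_0]$. Combining this with $u^\delta\le q_0^\delta$ produces the lower bound
\begin{equation*}
H_\mu(\alpha)\ge\frac{\mu}{q_0^\delta}\int_{q_1}^{q_0}\frac{du}{-f'(q_1)(u-q_1)-\alpha}=-\frac{\mu}{q_0^\delta\,f'(q_1)}\ln\frac{f'(q_1)(q_0-q_1)+\alpha}{\alpha},
\end{equation*}
an explicit logarithm whose argument tends to $+\infty$ as $\alpha\to0^-$, so the right-hand side diverges. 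This is the exact counterpart of the comparison-function estimate built from $\hat L$ in Lemma~\ref{exist-alpha-isen}, with the roles of the two endpoints interchanged.

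Assembling these three properties, $H_\mu$ is continuous and strictly increasing on $(-\infty,0)$ with $\lim_{\alpha\to-\infty}H_\mu=0$ and $\lim_{\alpha\to0^-}H_\mu=+\infty$, so there is a unique $\alpha_\mu\in(-\infty,0)$ solving $H_\mu(\alpha_\mu)=1$, which is precisely the assertion of the lemma.
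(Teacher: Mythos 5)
Your proof is correct and takes essentially the same route as the paper's: via the inverse-function reduction \eqref{isen-tem-ODE}, everything rests on the analysis of $H_\mu$ on $(-\infty,0)$ — continuity and strict monotonicity, the vanishing limit as $\alpha\to-\infty$, and a tangent-line comparison at $u=q_1$ forcing $H_\mu(\alpha)\to+\infty$ as $\alpha\to 0^-$ — exactly as in the paper's Lemma \ref{lem:vb-alpha}. Incidentally, your version of the key lower bound, with the constant $q_0^{\delta}$ coming from $u^{\delta}\le q_0^{\delta}$, is the airtight form of the paper's display, which writes $q_1^{\delta}$ even though the pointwise comparison of the denominators requires the upper bound $q_0^{\delta}$ (the paper's conclusion is unaffected, since only divergence of the logarithm is needed).
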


\subsubsection{Convergence of $(u^{\mu},\alpha_{\mu})$ as $ \mu\sTo 0+ $}

Now we continue to investigate the asymptotic behaviors of the solutions $(u^{\mu},\alpha_{\mu})$ to the problem \eqref{isen-mu-first-order} and \eqref{insen-tem-first-bd} as the parameter $ \mu $ vanishes, checking that whether they will converge to a shock solution among \eqref{shock-solution_vb} or not.

Since the $u^{\mu}$ is strictly decreasing for any $\mu>0$, by the estimate \eqref{isen-mu-prior-estimate}, one obtains that
\begin{equation}
T.V.u^{\mu}=q_{0}-q_{1}.
\end{equation}
Then by Helly's theorem, there exists a subsequence $\{u^{\mu_n}\}$ and a function $u^{0}$ of bounded variation, such that $\mu_{n}\sTo0$ as $ n\sTo\infty $, and for a.e. $ x\in[0,1] $,
\begin{equation}
u^{\mu_n}(x)\to u^{0}(x),\qquad \text{as} \quad n \to \infty.
\end{equation}
Then, one needs to check whether the limit function $u^{0}$ is a transonic shock solution among \eqref{shock-solution_vb} and whether it is the unique limit of $u^{\mu}$. 
To this aim, we are going to carry out the following three steps similar as the ones listed in the Section \ref{Section 3.1.3}, which finally shows that the solutions $(u^{\mu},\alpha_{\mu})$ to the problem \eqref{isen-mu-first-order} and \eqref{insen-tem-first-bd} converge to a shock solution $(u^{0},0)$ among \eqref{shock-solution_vb} as $ \mu\sTo0+ $ and Theorem \ref{thm: isen-viscous} holds.

\begin{itemize}
	\item [(i)] Calculate the limit $\displaystyle \alpha_0= \lim_{\mu \to 0}\alpha_{\mu}$.
	\item [(ii)] Prove that $u^0$ satisfies the following equation in the weak sense:
	\begin{align}\label{limit-eq-vb}
		F(u,\alpha_0)=0,
	\end{align} 
	which is the formal limit of \eqref{isen-mu-first-order}. This shows that what values $u^0$ can take.
	\item [(iii)] Calculate the limit of $X_\mu$ to prove the uniqueness of $u^0$. 
\end{itemize}

For the first step (i), the following lemma shows that $\alpha_0=0$.
\begin{lem}
	It holds that $\displaystyle \lim_{\mu \to 0+} \alpha_\mu =0$.
\end{lem}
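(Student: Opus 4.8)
The plan is to mimic the argument of Lemma \ref{limit0}. By Lemma \ref{lem:vb-alpha} the parameter $\alpha_\mu$ is characterized by $H_\mu(\alpha_\mu)=1$, and since $H_\mu$ carries the explicit prefactor $\mu$, the integral $-\int_{q_1}^{q_0}(u^{\delta}F(u,\alpha_\mu))^{-1}\,du$ must diverge like $\mu^{-1}$ as $\mu\to0$. Such a divergence can only come from the degeneration of the denominator $F(u,\alpha_\mu)=f(u)+\alpha_\mu$ near the endpoints $u=q_0,q_1$, where $f$ vanishes; this forces $\alpha_\mu\to0^-$. To make this rigorous I would squeeze $\alpha_\mu$ from below by an explicit quantity that tends to $0$.

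First I would record the sign information. By Lemma \ref{lem:vb-exist-solu} we have $\alpha_\mu<0$, so on $[q_1,q_0]$ one has $-(f(u)+\alpha)=|f(u)|-\alpha>0$ and the integrand of $H_\mu$ is positive; moreover $H_\mu$ is strictly increasing in $\alpha$. Since $u\ge q_1$ and $\delta>0$, the weight obeys $u^{-\delta}\le q_1^{-\delta}$, which reduces the problem to bounding $\int_{q_1}^{q_0}(|f(u)|-\alpha)^{-1}\,du$ from above. The key device is a piecewise-linear minorant of $|f|$: because $f$ is strictly convex with $f(q_1)=f(q_0)=0$, the function $|f|=-f$ is concave and positive on $(q_1,q_0)$, so the tent function $\hat L$ joining $(q_1,0)$, $(\hat u,|f(\hat u)|)$ and $(q_0,0)$, with apex at the midpoint $\hat u=(q_0+q_1)/2$ (exactly as in Lemma \ref{limit0}), satisfies $0\le \hat L\le |f|$. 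Integrating the resulting majorant explicitly gives
\[
	H_\mu(\alpha)\le \frac{\mu}{q_1^{\delta}}\Big(\frac{1}{s_1}+\frac{1}{s_2}\Big)\ln\frac{|f(\hat u)|-\alpha}{-\alpha}=:\hat H_\mu(\alpha),
\]
where $s_1,s_2>0$ are the slopes of $\hat L$ (equal, for the midpoint apex).

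Solving $\hat H_\mu(\hat\alpha_\mu)=1$ then yields the closed form $\hat\alpha_\mu=-|f(\hat u)|\big/\big(\exp(C/\mu)-1\big)$ for an explicit constant $C>0$, so that $\hat\alpha_\mu\to0^-$ as $\mu\to0^+$. Since $H_\mu\le\hat H_\mu$ and both functions are strictly increasing in $\alpha$, comparing the two identities $H_\mu(\alpha_\mu)=1=\hat H_\mu(\hat\alpha_\mu)$ gives $1=H_\mu(\alpha_\mu)\le\hat H_\mu(\alpha_\mu)$, whence $\hat\alpha_\mu\le\alpha_\mu$ by monotonicity of $\hat H_\mu$. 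Together with $\alpha_\mu<0$ this produces the squeeze $\hat\alpha_\mu\le\alpha_\mu<0$, and letting $\mu\to0$ yields $\lim_{\mu\to0+}\alpha_\mu=0$. I expect the main obstacle to be the endpoint degeneracy: as $\alpha\to0^-$ the integrand blows up at both $q_0$ and $q_1$, so the comparison function must capture the precise logarithmic rate of divergence there. The concavity-based tent provides exactly such an integrable minorant with the correct logarithmic behaviour, after which the remaining steps — explicit integration, solving for $\hat\alpha_\mu$, and the monotone comparison — are routine.
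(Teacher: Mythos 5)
Your proposal is correct and takes essentially the same route as the paper's own proof: the identical tent-function minorant of $\abs{f}$ with apex at the midpoint $\hat u=(q_{0}+q_{1})/2$, the same weight bound $u^{-\delta}\le q_{1}^{-\delta}$, the same explicit solution $\hat\alpha_{\mu}$ of $\hat H_{\mu}(\hat\alpha_{\mu})=1$ (your closed form agrees with the paper's $\hat\alpha_{\mu}=\frac{s(q_{0}-q_{1})}{2(1-e^{s q_{1}^{\delta}/(2\mu)})}$ since $s(q_{0}-q_{1})/2=\abs{f(\hat u)}$), and the same squeeze $\hat\alpha_{\mu}\le\alpha_{\mu}<0$. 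The only cosmetic difference is that you conclude via monotonicity of $\hat H_{\mu}$ while the paper uses monotonicity of $H_{\mu}$, which is immaterial.
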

\begin{proof}
	Let $\hat{u}:=\frac{q_{0}+q_{1}}{2}$, and
	\begin{equation*}
		\hat{L}(u):=\left\{
		\begin{aligned}
			& -s(u-q_{1}),\quad & q_{1}\le u < \hat{u},  \\
			& s(u-q_{0}),\quad  & \hat{u} \le u \le q_{0},
		\end{aligned}
		\right.
	\end{equation*}
	where $\displaystyle s:=\frac{f(\hat{u})}{\hat{u}-q_{0}}=-\frac{f(\hat{u})}{\hat{u}-q_{1}}$.
	Since $f$ is strictly convex and $f(q_{0})=f(q_{1})=0$, one has that
	\begin{equation*}
		f(u)\le \hat{L}(u)\le 0, \quad \text{for any} \quad u\in[q_{1},q_{0}],
	\end{equation*}
	and $f(u)=\hat{L}(u)$ if and only if $u$ take values at $q_{0}$, $q_{1}$ and $\hat{u}$.
	Therefore, for any $\alpha<0$, we have
	\begin{equation}
		\begin{aligned}
			H_{\mu}(\alpha) & =\int_{q_{1}}^{q_{0}}\frac{\mu}{u^{\delta}(-f(u)-\alpha)}du<\int_{q_{1}}^{q_{0}}\frac{\mu}{q_{1}^{\delta}(-\hat{L}(u)-\alpha)}               \\
			& =\frac{\mu}{q_{1}^{\delta}}\left(\int_{q_{1}}^{\hat{u}}\frac{1}{s(u-q_{1})-\alpha}+\int_{\hat{u}}^{q_{0}}\frac{1}{-s(u-q_{0})-\alpha}\right) \\
			& =\frac{\mu}{s\cdot q_{1}^{\delta}}\cdot \left(\ln\frac{s(\hat{u}-q_{1})-\alpha}{-\alpha}+\ln\frac{-s(\hat{u}-q_{0})-\alpha}{-\alpha}\right)    \\
			& =\frac{2\mu}{s\cdot q_{1}^{\delta}}\cdot \ln\left(1-s(q_{0}-q_{1})\cdot\frac{1}{2\alpha}\right)                                               \\
			& =:\hat{H}(\alpha)
		\end{aligned}
	\end{equation}
	Then for sufficiently small $\mu>0$, there exists
	\begin{equation*}
		\hat{\alpha}_{\mu}:=\frac{s(q_{0}-q_{1})}{2(1-e^{\frac{s\cdot q_{1}^{\delta}}{2\mu}})}
	\end{equation*}
	such that
	\begin{equation*}
		H_{\mu}(\hat{\alpha}_{\mu})<\hat{H}_{\mu}(\hat{\alpha}_{\mu})=1=H_{\mu}(\alpha_{\mu})
	\end{equation*}
	Since $H_{\mu}$ is strictly increasing with respect to $\alpha\in (-\infty,0)$, it holds that
	\begin{equation*}
		\hat{\alpha}_{\mu}<\alpha_{\mu}<0.
	\end{equation*}
	Then we complete the proof of this lemma since
	\begin{equation*}
		\lim_{\mu \to 0+}\hat{\alpha}_{\mu}=0.
	\end{equation*}
\end{proof}


For the second step (ii), 
we prove that $u^0$ satisfies \eqref{limit-eq-vb} in the weak sense by taking the limit of \eqref{isen-mu-first-order}, which implies that $u^0$ can only take the value of $q_0$ or $q_1$ almost everywhere.

\begin{lem}
	The limit function $u^{0}$ is a weak solution to the limit problem \eqref{limit-eq-vb}, i.e. for any test functions $\phi(x) \in C^\infty[0,1]$, 
	\begin{align}
		\int_0^1 F(u^0(x),\alpha_0) \cdot \phi(x) dx =0.
	\end{align}
\end{lem}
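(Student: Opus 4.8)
The plan is to pass to the limit in the integral form of the first-order equation \eqref{isen-mu-first-order}, exactly along the lines of the corresponding step (ii) in Section \ref{Section 3.1.3} for the heat-conductive barotropic case. Multiplying \eqref{isen-mu-first-order} by a test function $\phi \in C^{\infty}[0,1]$ and integrating over $(0,1)$ gives, along the convergent subsequence $\{\mu_{n}\}$,
\begin{equation*}
    \int_{0}^{1} F(u^{\mu_{n}}, \alpha_{\mu_{n}})\,\phi\,dx = \int_{0}^{1} \mu_{n} (u^{\mu_{n}})^{-\delta}\,\partial_{x} u^{\mu_{n}}\,\phi\,dx .
\end{equation*}
The whole argument then reduces to showing that the right-hand side vanishes as $\mu_{n}\to 0$, while the left-hand side converges to the desired integral.

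First I would control the viscous term on the right. The key point is that, although no uniform $L^{\infty}$ bound on $\partial_{x}u^{\mu_{n}}$ is available, the monotonicity from Lemma \ref{isen-vis-prior-hopf} yields the uniform total-variation bound $\int_{0}^{1}|\partial_{x}u^{\mu_{n}}|\,dx = q_{0}-q_{1}$. Combined with the lower bound $u^{\mu_{n}}\ge q_{1}>0$ from \eqref{isen-mu-prior-estimate}, which forces $(u^{\mu_{n}})^{-\delta}\le q_{1}^{-\delta}$ since $\delta>0$, this gives
\begin{equation*}
    \left| \int_{0}^{1} \mu_{n} (u^{\mu_{n}})^{-\delta}\,\partial_{x} u^{\mu_{n}}\,\phi\,dx \right| \le \mu_{n}\, q_{1}^{-\delta}\,\|\phi\|_{L^{\infty}}\,(q_{0}-q_{1}) \longrightarrow 0 .
\end{equation*}
Equivalently, one may integrate by parts after writing $(u^{\mu_{n}})^{-\delta}\partial_{x}u^{\mu_{n}} = \partial_{x}G(u^{\mu_{n}})$ with $G'(u)=u^{-\delta}$ and use that $G(u^{\mu_{n}})$ is uniformly bounded on the compact interval $[q_{1},q_{0}]\subset(0,\infty)$; both routes work.

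Next I would handle the left-hand side. Since $q_{1}\le u^{\mu_{n}}\le q_{0}$ and $f$ is continuous, the integrand $F(u^{\mu_{n}},\alpha_{\mu_{n}}) = f(u^{\mu_{n}})+\alpha_{\mu_{n}}$ is uniformly bounded; moreover $u^{\mu_{n}}\to u^{0}$ a.e. and, by the preceding lemma, $\alpha_{\mu_{n}}\to \alpha_{0}=0$. The dominated convergence theorem then gives
\begin{equation*}
    \int_{0}^{1} F(u^{\mu_{n}},\alpha_{\mu_{n}})\,\phi\,dx \longrightarrow \int_{0}^{1} F(u^{0},\alpha_{0})\,\phi\,dx ,
\end{equation*}
and combining the two displays yields the claim. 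I do not anticipate a genuine obstacle here; the only delicate point is to avoid relying on a pointwise estimate for $\partial_{x}u^{\mu_{n}}$, replacing it by the total-variation bound, and to note that it is the lower bound $u^{\mu_{n}}\ge q_{1}$ (not mere positivity) that keeps $(u^{\mu_{n}})^{-\delta}$ uniformly bounded when $\delta>0$.
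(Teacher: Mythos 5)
Your proof is correct, and its overall architecture --- test the first-order equation \eqref{isen-mu-first-order} against $\phi$, show the viscous term vanishes, and pass to the limit in the $F$-term by dominated convergence using $u^{\mu_n}\to u^0$ a.e.\ and $\alpha_{\mu_n}\to 0$ --- is exactly the paper's. The one genuine difference is how the viscous term is dispatched. The paper recognizes $(u^{\mu_n})^{-\delta}\partial_x u^{\mu_n}$ as an exact derivative, $\frac{1}{1-\delta}\partial_x\bigl((u^{\mu_n})^{1-\delta}\bigr)$, integrates by parts against $\phi$, and uses the uniform bounds $q_1\le u^{\mu_n}\le q_0$; this forces a case split, with $(u^{\mu_n})^{1-\delta}$ replaced by $\ln u^{\mu_n}$ when $\delta=1$. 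Your primary route instead exploits the monotonicity of $u^{\mu_n}$ from Lemma \ref{isen-vis-prior-hopf} to obtain the exact total-variation identity $\int_0^1\abs{\partial_x u^{\mu_n}}\,dx=q_0-q_1$ and bounds the term directly by $\mu_n\,q_1^{-\delta}\norm{\phi}_{L^\infty}(q_0-q_1)$, with no integration by parts and no case distinction at $\delta=1$. This is slightly more economical, and it makes explicit what the paper's computation also relies on, namely that the strictly positive lower bound $u^{\mu_n}\ge q_1$ (not mere positivity) keeps $(u^{\mu_n})^{-\delta}$, respectively $(u^{\mu_n})^{1-\delta}$ or $\ln u^{\mu_n}$, uniformly controlled. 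Your parenthetical alternative via the antiderivative $G$ with $G'(u)=u^{-\delta}$ is in fact the paper's proof verbatim, so you have effectively given both arguments; each is complete.
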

\begin{proof}
	Recalling that $q_1 \leq u^\mu(x) \leq q_0$ for $x \in [0,1]$, one has that
	for any test function $\phi(x)\in C^{\infty}[0,1]$, if $\delta \neq 1$, it holds that
	\begin{equation*}
		\begin{aligned}
			\mu_n \int_{0}^{1} \partial_{x} u^{\mu_n} \cdot (u^{\mu_n})^{-\delta}\cdot \phi dx 
			& =\mu_n \int_{0}^{1}\frac{1}{1-\delta}\partial_{x}((u^{\mu_n})^{1-\delta})\cdot \phi dx \\
			& =\frac{\mu_n}{1-\delta}\left((u^{\mu_n})^{1-\delta}\phi|_{x=0}^{x=1} -\int_{0}^{1}(u^{\mu_n})^{1-\delta}\partial_x \phi dx \right)               \\
			& \to 0, \quad \text{as}\quad \mu_n \to 0.
		\end{aligned}
	\end{equation*}
	
	And if $\delta=1$, the same conclusion also holds if we replace $(u^{\mu_n})^{1-\delta}$ by $\ln u^{\mu_n}$.
	On the other hand,
	\begin{equation*}
		\lim_{\mu_n \to 0}\int_{0}^{1}F(u^{\mu_n},\alpha_{\mu_n})\cdot \phi dx =\int_{0}^{1}F(u^{0};\alpha_{0}) \cdot \phi dx,
	\end{equation*}
	which completes the proof of the lemma.
\end{proof}

For the third step (iii), we are going to illustrate that the solution $u^{0}$ is the unique limit by calculating the limit of $X_\mu$ as $ \mu\sTo 0+ $. Since the method is similar to Section \ref{Section 3.2}, so we give the consequence without proof. 
\begin{lem}
	Let
	\begin{equation}
	I_{\mu}(u)=\frac{X_{\mu}(u)}{1-X_{\mu}(u)}.
	\end{equation}
	Then for any $u\in(q_{1},q_{0})$, it holds that
	\begin{equation}
		\lim_{\mu\to 0}I_{\mu}(u)=-\frac{f'(q_{1})}{f'(q_{0})}(\frac{q_{1}}{q_{0}})^{\delta}.
	\end{equation}
\end{lem}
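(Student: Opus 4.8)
The plan is to imitate the proof of Lemma \ref{I} for the heat-conductive polytropic case. First I would use the representation \eqref{representation of X_mu} of $X_\mu$ together with the normalization \eqref{vb-alpha} to write $1-X_\mu(u)=-\int_{q_1}^{u}\frac{\mu}{v^\delta F(v,\alpha_\mu)}\,dv$, so that the parameter $\mu$ and the overall sign cancel in the quotient and
\[
I_\mu(u)=\frac{X_\mu(u)}{1-X_\mu(u)}=\frac{\displaystyle\int_{u}^{q_0}\frac{dv}{v^\delta F(v,\alpha_\mu)}}{\displaystyle\int_{q_1}^{u}\frac{dv}{v^\delta F(v,\alpha_\mu)}}.
\]
Recall that $f$ is strictly convex with $f(q_0)=f(q_1)=0$, $f<0$ on $(q_1,q_0)$, and $f'(q_1)<0<f'(q_0)$ since the minimizer $q_*$ lies in $(q_1,q_0)$. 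Because $\alpha_\mu<0$ and $\alpha_\mu\to0^-$ (established above), the factor $F=f+\alpha_\mu$ stays strictly negative on $(q_1,q_0)$ and degenerates only at the two endpoints as $\mu\to0$; hence both integrals are negative, and the whole analysis reduces to identifying the rate at which each of them blows up.

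Next, for a fixed small $\sigma>0$ (small enough that $q_1+\sigma<q_*<q_0-\sigma$) I would split each integral into an endpoint piece and an interior piece, exactly as in the four-integral decomposition of Lemma \ref{I}. On the interior intervals $[q_1+\sigma,u]$ and $[u,q_0-\sigma]$ the function $f$ is bounded above by a strictly negative constant, so $|F|\ge c(\sigma)>0$ uniformly in $\mu$ and the interior pieces remain uniformly bounded as $\mu\to0$. On the endpoint pieces I would exploit convexity to sandwich $f$ between linear functions: for $v\in(q_0-\sigma,q_0)$ one has $f'(q_0)(v-q_0)\le f(v)\le f'(q_0-\sigma)(v-q_0)$, and for $v\in(q_1,q_1+\sigma)$ one has $f'(q_1)(v-q_1)\le f(v)\le f'(q_1+\sigma)(v-q_1)$. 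Replacing $v^\delta$ by its endpoint value and integrating the resulting elementary expressions $\int\frac{dv}{c(v-q_j)+\alpha_\mu}$ produces a logarithm, so that as $\mu\to0$ the $q_0$-piece behaves like $\frac{\ln|\alpha_\mu|}{q_0^\delta f'(q_0)}\,(1+o(1))$ and the $q_1$-piece like $\frac{-\ln|\alpha_\mu|}{q_1^\delta f'(q_1)}\,(1+o(1))$, up to the $\sigma$-dependent errors coming from using $f'(q_0-\sigma)$ and $f'(q_1+\sigma)$ in place of $f'(q_0)$ and $f'(q_1)$.

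Finally, dividing the two integrals and using that the bounded interior contributions are negligible against the diverging factor $\ln|\alpha_\mu|$, the quotient is squeezed between two expressions involving $f'(q_0),f'(q_0-\sigma),f'(q_1),f'(q_1+\sigma)$ and the weights $q_0^{-\delta},q_1^{-\delta}$; letting first $\mu\to0$ and then $\sigma\to0$ and invoking the continuity of $f'$ collapses both bounds to
\[
-\frac{f'(q_1)}{f'(q_0)}\Bigl(\frac{q_1}{q_0}\Bigr)^{\delta},
\]
which is the asserted limit. I expect the main obstacle to be the bookkeeping in this double limit: one must show that the $\sigma$-dependent sandwich coefficients and the bounded interior integrals genuinely drop out, i.e.\ that the logarithmic blow-up near each endpoint dominates and that its leading coefficient---carrying both the weight $q_j^{-\delta}$ and the slope $f'(q_j)$---is captured correctly. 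This is precisely the delicate estimate already carried out in Lemma \ref{I}, now with the weight $(A-2p)$ there replaced by $v^{-\delta}$, which is why the factor $(q_1/q_0)^\delta$ appears and, in the absence of the endpoint-slope symmetry used there, the ratio $f'(q_1)/f'(q_0)$ survives in the answer.
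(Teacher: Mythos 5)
Your proposal is correct and follows exactly the route the paper intends: the paper gives no proof of this lemma, remarking only that ``the method is similar to Section \ref{Section 3.2}'', i.e.\ to Lemma \ref{I}, and your argument---the quotient representation obtained from \eqref{representation of X_mu} and \eqref{vb-alpha}, the endpoint/interior splitting for fixed $\sigma$, the convexity sandwich of $f$ by linear functions at $q_0$ and $q_1$, and the logarithmic blow-up with leading coefficients $\bigl(q_0^{\delta}f'(q_0)\bigr)^{-1}$ and $-\bigl(q_1^{\delta}f'(q_1)\bigr)^{-1}$---is precisely that proof adapted as the authors envisage, with the weight $A-2p$ replaced by $v^{-\delta}$ and without the slope symmetry $f'(p_0)=-f'(p_1)$ available there. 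Your limit also reproduces the shock location \eqref{shock_location_vb} after writing $X_{s}=I/(1+I)$ with $f'(q_j)=1-\gamma q_j^{-\gamma-1}$, which confirms the bookkeeping is right.
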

This lemma implies that

\begin{lem}
	Let
	\begin{equation}
	u^{0}_{*}(x):=
	\left\{
	\begin{aligned}
	q_{0}, & \quad 0\leq x < X_{s}  \\
	q_{1}, & \quad X_{s} < x \leq 1
	\end{aligned}
	\right.
	\end{equation}
	with
	\begin{equation} 
	X_{s}:=\left(1+(\frac{q_{0}}{q_{1}})^{\delta-\gamma-1}\frac{q_{0}^{\gamma+1}-\gamma}{\gamma-q_{1}^{\gamma+1}}\right)^{-1} = \left(1+(\frac{M_0}{M_1})^\frac{2\delta}{\gamma+1} \frac{1-{M_0}^{-2}}{M_1^{-2}-1}\right)^{-1}.
	\end{equation}	
	It holds that $u^{\mu}$ converges to $u^0_{*}$ in $L^{1}(0,1)$ as $\mu \to 0+$, namely,
	\begin{equation}
		\lim\limits_{\mu \to 0+}\int_{0}^{1}{|u^{\mu}(x)-u^{0}_{*}(x)|dx = 0}.
	\end{equation}

\end{lem}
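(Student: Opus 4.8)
The plan is to derive the $L^1$ convergence from the pointwise limit of $I_\mu$ furnished by the preceding lemma, in two main stages followed by a transfer argument. First I would invert the Möbius relation $I_\mu(u)=X_\mu(u)/\bigl(1-X_\mu(u)\bigr)$, which gives $X_\mu(u)=I_\mu(u)/\bigl(1+I_\mu(u)\bigr)$, so the pointwise limit of $X_\mu$ is immediately
\[
\lim_{\mu\to0+}X_\mu(u)=\frac{L}{1+L},\qquad L:=-\frac{f'(q_1)}{f'(q_0)}\Bigl(\frac{q_1}{q_0}\Bigr)^{\delta}.
\]
Using $g(u)=u+u^{-\gamma}$ and hence $f'(u)=1-\gamma u^{-\gamma-1}=(u^{\gamma+1}-\gamma)u^{-\gamma-1}$, a direct computation shows $1/L=(q_0/q_1)^{\delta-\gamma-1}\,\frac{q_0^{\gamma+1}-\gamma}{\gamma-q_1^{\gamma+1}}$, whence $L/(1+L)=(1+1/L)^{-1}=X_s$. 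Thus $X_\mu(u)\to X_s$ for every $u\in(q_1,q_0)$, a limit that is \emph{independent of} $u$. Since $0\le X_\mu(u)\le1$ for all $u$ and $\mu$, the bounded convergence theorem on the finite interval $(q_1,q_0)$ then yields $\lim_{\mu\to0+}\int_{q_1}^{q_0}\bigl|X_\mu(u)-X_s\bigr|\,du=0$.

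The crux is to transfer this into $L^1$ convergence of $u^\mu$ itself, via a Fubini identity relating a monotone function to its inverse. By Lemma~\ref{isen-vis-prior-hopf}, $u^\mu$ is strictly decreasing on $[0,1]$ with range $[q_1,q_0]$ and inverse $X_\mu$, so $\{x:u^\mu(x)>u\}=[0,X_\mu(u))$ for a.e.\ $u$; the limit step function $u^0_*$ is decreasing too, with $\{x:u^0_*(x)>u\}=[0,X_s)$ for every $u\in(q_1,q_0)$. Applying Tonelli's theorem to the indicator of the symmetric difference of the two subgraphs $\{(x,u):q_1<u<u^\mu(x)\}$ and $\{(x,u):q_1<u<u^0_*(x)\}$ and slicing in the two coordinate directions gives
\[
\int_{0}^{1}\bigl|u^\mu(x)-u^0_*(x)\bigr|\,dx
=\int_{q_1}^{q_0}\bigl|X_\mu(u)-X_s\bigr|\,du ,
\]
because the horizontal slice at height $u$ is the symmetric difference of $[0,X_\mu(u))$ and $[0,X_s)$, of length $|X_\mu(u)-X_s|$, while the vertical slice at $x$ has length $|u^\mu(x)-u^0_*(x)|$. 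Combining this identity with the first stage proves the claimed convergence.

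The main obstacle I anticipate is the rigorous justification of this last identity: one must check that $X_\mu$ is genuinely the a.e.\ inverse of the strictly monotone $u^\mu$, so that the horizontal slices are exactly the stated intervals up to null sets, and that the single jump of $u^0_*$ at $x=X_s$ contributes nothing. These are the standard measure-zero subtleties in relating a monotone function to its inverse, entirely analogous to the passage from Lemma~\ref{I} to the convergence statement in Section~\ref{Section 3.2}; by contrast, the verification of the algebraic identity $L/(1+L)=X_s$ in the first stage is routine but should be recorded explicitly.
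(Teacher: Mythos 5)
Your proposal is correct and follows essentially the same route as the paper, which (by reference to Section \ref{Section 3.2}) passes from the pointwise limit of $I_\mu$ to $X_\mu(u)\to X_s$, deduces $\int_{q_1}^{q_0}|X_\mu(u)-X_s|\,du\to 0$ by bounded convergence, and transfers this to $L^1$ convergence of $u^\mu$ via the correspondence between a strictly monotone function and its inverse. Your explicit Tonelli identity $\int_0^1|u^\mu-u^0_*|\,dx=\int_{q_1}^{q_0}|X_\mu(u)-X_s|\,du$ and the algebraic verification $L/(1+L)=X_s$ simply make rigorous the steps the paper leaves implicit.
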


It is obvious that Theorem \ref{thm: isen-viscous} is a consequence of the above lemma.

\subsection{The non-isentropic polytropic gases}\label{Section 4.2}

In this subsection, we continue to investigate the case of polytropic gases and deal with the problem [{\bf VP}].
Following the ideas in \cite{FangZhao2021CPAA}, the boundary value problem \eqref{non-viscous-eq} and \eqref{non-viscous-bd} will be reformulated as a problem for an ordinary equation of first order with an unknown parameter. 
It turns out that the parameter $ \delta $ in \eqref{vis_tem_function} plays an important role in the analysis.
The argument finally leads us to Theorem \ref{thm: non-viscous}.

\subsubsection{Reformulation of the problem [{\bf VP}]}

Let $U=U^{\mu}(x):=(u^{\mu}(x),p^{\mu}(x),\rho^{\mu}(x))$ be a solution to the system \eqref{non-viscous-eq} for any $ \mu>0 $.
Then it holds that, by the assumption \eqref{rho-velocity},
%
\begin{equation*}
	\rho^{\mu}=\frac{1}{u^{\mu}}.
\end{equation*}
Then \eqref{non-viscous-eq} can be simplified into
\begin{equation}\label{non-viscous-eq-simplified}
	\left\{
	\begin{aligned}
		& \partial_{x} (u^\mu + p^\mu)  =\mu \partial_{x}((T^\mu)^{\delta}\partial_{x}u^\mu), \\
		& \partial_{x} \Phi^\mu  =\mu \partial_{x}((T^\mu)^{\delta} u^\mu \partial_{x} u^\mu),
	\end{aligned}
	\right.
\end{equation}
Let
\begin{equation*}
	P_{0}:=\rho_{0} q_{0}^{2}+p_{0}, \quad
	\Phi_{0}:=\frac{1}{2}q_{0}^{2}+\frac{\gamma}{\gamma-1}\frac{p_{0}}{\rho_{0}}.
\end{equation*} 
Then integrating \eqref{non-viscous-eq-simplified} from $0$ to $x$, one has:
\begin{equation}\label{non-tem-inte}
	\left\{
	\begin{aligned}
		& u^{\mu}+p^{\mu}-P_{0}  = \mu T^{\delta}\partial_{x}u-\alpha_\mu  ,     \\
		& \Phi^{\mu}-\Phi_{0}    = \mu T^{\delta}u\partial_{x}u- \alpha_\mu q_0,
	\end{aligned}
	\right.
\end{equation}
where $\alpha_{\mu}=\mu T^{\delta}_{0}\partial_{x}u(0)$, with $T_{0}$ satisfying $p_{0}=\rho_{0}RT_{0}$, is an unknown constant which should be determined together with $u^{\mu}$ by the boundary conditions
\begin{equation}\label{nonsen-tem-first-bd}
	u^{\mu}(0)=q_{0}, \qquad u^{\mu}(1)=q_{1}.
\end{equation}

By eliminating the terms $T^{\delta}\partial_{x}u$ in $(\ref{non-tem-inte})$, one obtains
\begin{equation}\label{non-p-rep}
	p^{\mu}=\frac{\gamma-1}{2}u^{\mu}-(\gamma-1)P_{0}+(\gamma-1)\frac{\Phi_{0}}{u^{\mu}}+(\gamma-1)\alpha_{\mu}(1-\frac{q_{0}}{u^{\mu}}),
\end{equation}
Substituting \eqref{non-p-rep} into the first equation of \eqref{non-tem-inte}, one obtains
\begin{equation}
	\mu\frac{1}{R^{\delta}}(u^{\mu})^{\delta}(p^{\mu})^{\delta}\partial_{x}u^{\mu} = \frac{\gamma+1}{2}u^{\mu}-\gamma P_{0} +(\gamma-1)\frac{\Phi_{0}}{u^{\mu}}+\alpha_{\mu} (\gamma-(\gamma-1)\frac{q_{0}}{u^{\mu}})
\end{equation}
namely,
\begin{equation}\label{tem-non-first-eq}
	\begin{aligned}
		\mu(\frac{\gamma-1}{R})^{\delta}\partial_x u^{\mu} 
		& =F(u^{\mu},\alpha_{\mu}),
	\end{aligned}
\end{equation}
where $F$ is defined as
\begin{equation}
	F(u,\alpha):=\frac{f_{1}(u)+\alpha g_{1}(u)}{(f_{2}(u)+\alpha g_{2}(u))^{\delta}},
\end{equation}
with
\begin{align}
	f_{1}(u)&=\frac{\gamma+1}{2}u-\gamma P_{0} +(\gamma-1)\frac{\Phi_{0}}{u},\label{formula of f_1}\\
	g_{1}(u)&=\gamma-(\gamma-1)\frac{q_{0}}{u},\label{formula of g_1}\\
	f_{2}(u)&=\frac{1}{2}u^{2}-P_{0}u+\Phi_{0},\label{formula of f_2}\\
	g_{2}(u)&=u-q_{0},\label{formula of g_2}
\end{align}
being smooth functions defined in $(0,\infty)$. 
Obviously,  $f_{1}$ is strictly convex and, by the Rankine-Hugoniot conditions \eqref{R-H}, it satisfies
\begin{equation}
f_{1}(q_{0})=f_{1}(q_{1})=0,
\end{equation}
which implies that
\begin{equation}
f_{1}(u)<0,\quad \text{for $u\in(q_{1},q_{0})$.}
\end{equation}
Then the boundary value problem \eqref{non-viscous-eq} with boundary conditions \eqref{non-viscous-bd} is equivalent to the boundary value problem \eqref{tem-non-first-eq} with boundary conditions \eqref{nonsen-tem-first-bd}.

As $\mu \to 0$, the equation \eqref{tem-non-first-eq} formally converges to
\begin{equation}\label{weak-sol-vp}
F(u^{0}(x),\alpha_{0})=0.
\end{equation}
The weak solution $(u^{0},\alpha_{0})$ to \eqref{weak-sol-vp} is defined in the sense that, for any test functions $\phi(x)\in C^{\infty}[0,1]$, it holds that
\begin{equation}\label{non-weak-sol-vp}
\int_{0}^{1}F(u^{0}(x),\alpha_{0})\phi(x)dx = 0.
\end{equation}
Obviously, for any $ 0 < x_{s} <1 $, the weak solutions $(u^{0}(x;x_{s}), \alpha_{0})$ associating with the shock solutions \eqref{shock-solution} satisfy that 
\begin{equation}\label{shock-solution_vp}
\alpha_{0} = 0,\quad\text{ and }\quad
u^{0}(x;x_{s}):=
\left\{
\begin{aligned}
q_{0}, & \quad 0\leq x < x_{s},  \\
q_{1}, & \quad x_{s} < x \leq 1.
\end{aligned}
\right.
\end{equation}

Thus, the singular limit problem [{\bf VP}] is reformulated as the following problem.

\vskip 5px

\underline{Problem [{\bf VP-R}]}:

\begin{quotation}
	
	Let $\mu>0$. Try to find a solution $(u^{\mu},\alpha_{\mu})$ satisfying the equation \eqref{tem-non-first-eq} and the boundary conditions \eqref{nonsen-tem-first-bd}.
	
	Furthermore, find shock solutions, among weak solutions $(u^{0}(x;x_{s}), \alpha_{0})$ in \eqref{shock-solution_vp} to the system \eqref{weak-sol-vp}, which could be the limit of $(u^{\mu},\alpha_{\mu})$ ( or its subsequences ) as $\mu\sTo 0$. \qed
\end{quotation}

\vskip 5px


Therefore the aim of this subsection is to deal with the reformulated problem [{\bf VP-R}]. 
It turns out that the parameter $ \delta>0 $ plays an important role in the analysis.
The case $ 0<\delta\leq 1 $ and the case $ \delta>1 $ will behave differently, as shown in Theorem \ref{thm: non-viscous}.

\subsubsection{Analysis for the problem [{\bf VP-R}]}

The techniques to deal with the problem [{\bf VP-R}] are analogous as in Section \ref{Section 3.2}.
The key ingredient is to analyze the problem for the inverse function of $ u^{\mu} $, and it needs to show that $ u^{\mu} $ is strictly monotone such that its inverse function exists.

Applying lemma \ref{lemma2} to \eqref{tem-non-first-eq}, we have that $u^{\mu}$ is a non-increasing function and 
\begin{equation*}
q_1 \leq u^\mu(x) \leq q_0, \quad \text{for $x \in [0,1]$}.
\end{equation*}

Moreover, multiplying the first equation of \eqref{non-viscous-eq-simplified} by u and taking the difference with the second equation of \eqref{non-viscous-eq-simplified}, one obtains
\begin{equation*}
\partial_x p^\mu = \mu (\gamma-1) (T^\mu)^\delta \frac{(\partial_x u^\mu)^2}{u^\mu}
-\gamma p^\mu \frac{\partial_x u^\mu}{u^\mu}.
\end{equation*}
Substituting it into the first equation of \eqref{non-viscous-eq-simplified}, it holds that
\begin{equation*}
\mu (T^\mu)^\delta \partial_{xx} u^\mu = \partial_x u^\mu + \mu (\gamma-1) \frac{(T^\mu)^\delta}{u^\mu} (\partial_x u^\mu)^2 - \gamma \frac{p^\mu}{u^\mu} \partial_x u^\mu
- \mu \partial_x ((T^\mu)^\delta ) \partial_x u^\mu.
\end{equation*}
Therefore, by writing 
\begin{align*}
	a(x)&=\mu (T^\mu(x))^\delta, \\
	b(x)&=1 + \mu (\gamma-1) \frac{(T^\mu(x))^\delta}{u^\mu(x)} \partial_x u^\mu(x) - \gamma \frac{p^\mu(x)}{u^\mu(x)}
	- \mu \partial_x ((T^\mu(x))^\delta ),
\end{align*}
$u^\mu(x)$ satisfies the equation
\begin{align*}
	Lv=a(x) \partial_{xx} v - b(x) \partial_{x} v =0.
\end{align*}
Notice that, if $u^\mu$ is a $C^1$ solution, $a(x)>0$ and $b(x)$ are bounded functions
so that the strong maximum principle and Hopf lemma imply
\begin{equation}
\partial_{x}u^{\mu}(0)<0,\quad \partial_{x}u^{\mu}(1)<0.
\end{equation} 
The former one implies that 
\begin{equation}\label{non-mu-sign-alpha}
\alpha_{\mu}<0
\end{equation}
and then the latter one deduces that the condition \eqref{mu-gamma}, which is equivalent to $g_1(q_1)>0$, is necessary for the existence of $u^\mu$ by \eqref{tem-non-first-eq}.

Since $q_1 < u^\mu(x) < q_0$ for $x \in (0,1)$, it is easy to see that $f_1(u^{\mu})<0, f_2(u^\mu)>0, g_2(u^\mu)<0$ and $g_1(u^\mu)>0$ under condition \eqref{mu-gamma}. With $\alpha_\mu<0$, we have $F(u^\mu, \alpha_\mu)<0$ which implies that $u^\mu$ is strictly decreasing in $[0,1]$. Similar as Section \ref{Section 4.1}, there exists an inverse function which is denoted by $x=X_\mu(u)$. 
Then $X_{\mu}$ satisfies the following problem:
\begin{equation}\label{mu-non-inve}
	\left\{
	\begin{aligned}
		& \frac{d X_{\mu}(u)}{du}=\left(\frac{\gamma-1}{R}\right)^{\delta} \frac{\mu}{F(u,\alpha_{\mu})}, \\
		& X_{\mu}(q_{0})=0, \quad X_{\mu}(q_{1})=1.
	\end{aligned}
	\right.
\end{equation}
Therefore, solving problem \eqref{tem-non-first-eq}\eqref{nonsen-tem-first-bd} is equivalent to solving the problem \eqref{mu-non-inve}.

It can be easily seen that the solution $ (X_{\mu}(u);\ \alpha_{\mu}) $ to \eqref{mu-non-inve} can be written as
\begin{equation}\label{representation-vp}
	X_{\mu}(u) = -\int_{u}^{q_{0}} \left(\frac{\gamma-1}{R}\right)^{\delta} \frac{\mu}{F(v,\alpha_{\mu})} \dif v,
\end{equation}
with the unknown constant $ \alpha_{\mu} < 0 $ being determined by
\begin{equation}\label{vp-alpha}
	1 = -\int_{q_{1}}^{q_{0}} \left(\frac{\gamma-1}{R}\right)^{\delta} \frac{\mu}{F(v,\alpha_{\mu})} \dif v.
\end{equation}
Therefore, it suffices to show the existence of the solution $\alpha_\mu$ to the equation \eqref{vp-alpha} to establish the existence of the solution to the problem \eqref{mu-non-inve}.
Moreover, as $\mu\sTo 0$, the constant $\alpha_\mu$ converges to $ 0 $ is necessary such that the solution $(u^{\mu};\alpha_{\mu})$ may converge to a shock solution among \eqref{shock-solution_vp}.
The following lemma states the existence of the solution $\alpha_\mu$ to the equation \eqref{vp-alpha} and their asymptotic behaviors as $\mu\sTo 0$. 
It turns out that the value of the parameter $\delta$ plays an important role in the analysis.

\begin{lem}\label{exist-alpha_mu}
	Suppose that \eqref{mu-gamma} holds. Then
	\begin{itemize}
		\item[(i)] If $\delta \leq 1$, there exists a constant $\mu_1>0$ such that for any $\mu \in (0,\mu_1)$ there exists a unique solution $(u^{\mu};\alpha_{\mu})$ to the reduced problem  \eqref{tem-non-first-eq} with boundary condition \eqref{nonsen-tem-first-bd}. Moreover,
		\begin{equation}
			\lim_{\mu\to 0^{+}}\alpha_{\mu} = 0.
		\end{equation}
		\item[(ii)] If $\delta>1$, there exists a constant $\mu_2>0$ such that for any $\mu \in (0,\mu_2)$, there exist two solutions $(u^{\mu}_1; \alpha_{\mu 1})$ and $(u^{\mu}_2; \alpha_{\mu 2})$ to the reduced problem  \eqref{tem-non-first-eq} with boundary condition \eqref{nonsen-tem-first-bd}, satisfying
		\begin{equation}
			\lim_{\mu\to 0^{+}}\alpha_{\mu 1} = 0, \quad \lim_{\mu\to 0^{+}}\alpha_{\mu 2} = -\infty.
		\end{equation}
	\end{itemize}
\end{lem}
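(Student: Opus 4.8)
The plan is to reduce the whole statement to a scalar counting problem for the defining equation \eqref{vp-alpha}. Writing $C_\delta\defs(\frac{\gamma-1}{R})^\delta$ and
\[
G(\alpha)\defs\int_{q_1}^{q_0}\frac{(f_2(v)+\alpha g_2(v))^{\delta}}{-(f_1(v)+\alpha g_1(v))}\,dv,\qquad \alpha<0,
\]
equation \eqref{vp-alpha} reads $\mu C_\delta\,G(\alpha_\mu)=1$, i.e. $G(\alpha_\mu)=1/(\mu C_\delta)$. Under \eqref{mu-gamma} one has $f_1<0$, $g_1>0$, $f_2>0$, $g_2<0$ on $(q_1,q_0)$, so for every $\alpha<0$ both the base $f_2+\alpha g_2$ and the denominator $-(f_1+\alpha g_1)$ stay strictly positive; hence $G$ is a well-defined, positive, continuous function on $(-\infty,0)$, and the lemma becomes the problem of counting and locating the roots of $G(\alpha)=1/(\mu C_\delta)$ as the target $1/(\mu C_\delta)\to+\infty$.

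The heart of the matter is the behaviour of $G$ at the two ends of $(-\infty,0)$. First I would show $G(\alpha)\to+\infty$ as $\alpha\to0^-$: near the endpoints $f_1(v)\approx f_1'(q_1)(v-q_1)$ and $f_1(v)\approx f_1'(q_0)(v-q_0)$ with $f_1'(q_1)<0<f_1'(q_0)$, so the integrand is comparable to $f_2(q_i)^{\delta}/(|f_1'(q_i)|\,|v-q_i|+|\alpha|\,g_1(q_i))$, whose integral contributes an amount $\sim\text{const}\cdot\ln(1/|\alpha|)$. Second, and decisively, as $\alpha\to-\infty$ one has $f_2+\alpha g_2\approx|\alpha|(q_0-v)$ and $-(f_1+\alpha g_1)\approx|\alpha|\,g_1(v)$, whence
\[
G(\alpha)\sim L\,|\alpha|^{\delta-1},\qquad L\defs\int_{q_1}^{q_0}\frac{(q_0-v)^{\delta}}{g_1(v)}\,dv>0.
\]
The single exponent $\delta-1$ is exactly what splits the two cases: $G\to0$ if $\delta<1$, $G\to L$ if $\delta=1$, and $G\to+\infty$ if $\delta>1$.

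The counting is then closed by a monotonicity analysis in the two extreme regimes. Differentiating gives
\[
\partial_\alpha\!\left[\frac{N^{\delta}}{D}\right]=\frac{N^{\delta-1}\,(\delta g_2 D+N g_1)}{D^{2}},\qquad N\defs f_2+\alpha g_2,\ \ D\defs-(f_1+\alpha g_1),
\]
so the sign of $G'$ is governed by $\delta g_2 D+N g_1$. Near $\alpha=0^-$ the factor $D\to0$ at the endpoints while $N\to f_2>0$, so $N g_1$ dominates and $G'>0$ there, yielding strict monotonicity of $G$ on some $(-\epsilon_0,0)$; for $\alpha\to-\infty$ the same bracket behaves like $(1-\delta)\,|\alpha|\,g_1(v)\,(q_0-v)$, whose sign is that of $1-\delta$. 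In case (i), $\delta\le1$, $G$ stays bounded on every $(-\infty,-\epsilon]$ but blows up at $0^-$, so choosing $\mu_1$ small enough that $1/(\mu C_\delta)>\sup_{(-\infty,-\epsilon_0]}G$ confines the root to $(-\epsilon_0,0)$, where strict monotonicity gives a unique $\alpha_\mu$, and the blow-up forces $\alpha_\mu\to0^-$. In case (ii), $\delta>1$, $G\to+\infty$ at both ends and is strictly monotone in each extreme regime, so for $\mu<\mu_2$ the equation has exactly one root $\alpha_{\mu1}$ near $0^-$ and one root $\alpha_{\mu2}$ far to the left; inverting the two asymptotics gives $\alpha_{\mu1}\to0$ and, from $L\,|\alpha_{\mu2}|^{\delta-1}\sim1/(\mu C_\delta)$, $|\alpha_{\mu2}|\sim(\mu C_\delta L)^{-1/(\delta-1)}\to+\infty$, i.e. $\alpha_{\mu2}\to-\infty$.

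The main obstacle is the $\alpha\to-\infty$ analysis. One must justify uniformly — including in the endpoint layers where $f_1$ and $g_2$ themselves degenerate — the replacement of $N$ and $D$ by their leading large-$|\alpha|$ forms, both to obtain the sharp scaling $G\sim L\,|\alpha|^{\delta-1}$ and to pin down the sign of $\delta g_2 D+N g_1$; and one must exclude spurious roots in the bounded middle range, which follows from the compactness bound $\sup_{[\alpha_-,\alpha_+]}G<\infty$ together with $1/(\mu C_\delta)\to+\infty$. It is precisely this strict monotonicity in the two extreme regimes that upgrades ``at least two'' to ``exactly two'' roots when $\delta>1$ and guarantees uniqueness when $\delta\le1$.
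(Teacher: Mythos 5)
Your proposal is correct and follows the same master plan as the paper's proof: both reduce the lemma to counting roots of the scalar equation \eqref{vp-alpha} for the function you call $G$ (the paper's $J(\alpha)$, the correspondence with solutions $(u^\mu,\alpha_\mu)$ of \eqref{tem-non-first-eq}--\eqref{nonsen-tem-first-bd} having been set up beforehand via the inverse function $X_\mu$), establish the blow-up $J\to+\infty$ as $\alpha\to0^-$ and the trichotomy as $\alpha\to-\infty$, and settle the count through the sign of the derivative integrand — your bracket $\delta g_2D+Ng_1$ is exactly the paper's expression in its proof of \eqref{derivative-J}, and your near-$0^-$ positivity via the $1/D^2$ endpoint concentration is the paper's $I_1/I_2$ argument. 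Two execution differences are worth recording. First, for $\delta\le1$ the paper proves \emph{global} strict monotonicity of $J$ on $(-\infty,0)$ by showing pointwise $g_2f_1-g_1f_2<0$, an explicit computation using the Rankine--Hugoniot identities $P_0=\frac{\gamma+1}{2\gamma}(q_0+q_1)$, $\Phi_0=\frac{\gamma+1}{2(\gamma-1)}q_0q_1$ and \eqref{mu-gamma}; note this same inequality makes your bracket pointwise positive when $\delta\le1$, since $\delta g_2D\ge g_2D$ and $Ng_1+g_2D=f_2g_1-g_2f_1>0$, so the global statement is available cheaply. Your alternative — local monotonicity on $(-\epsilon_0,0)$ plus boundedness of $G$ on $(-\infty,-\epsilon_0]$ to confine the root — is valid and avoids the algebra, though it yields uniqueness only for small $\mu$ (which is all the lemma asserts). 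Second, your sharp scaling $G\sim L|\alpha|^{\delta-1}$ is a genuine refinement: the paper only states the qualitative limits \eqref{asym of J2}--\eqref{asym of J4}, whereas your version unifies them and gives the rate $|\alpha_{\mu2}|\sim(\mu C_\delta L)^{-1/(\delta-1)}$. On the obstacle you flag at $\alpha\to-\infty$: the leading-order sign $(1-\delta)|\alpha|g_1(v)(q_0-v)$ of the bracket indeed fails pointwise in the thin layer $q_0-v=O(1/|\alpha|)$, where $g_2D$ degenerates and the bracket is in fact positive ($\approx f_2(q_0)g_1(q_0)>0$); but that layer contributes only $O(|\alpha|^{-3})$ to $G'$, dominated by the negative bulk $\sim-(\delta-1)L|\alpha|^{\delta-2}$, and the paper's proof implements precisely this resolution by excising $(q_0-\varepsilon,q_0)$, bounding $J'<I_3-I_4$, and exploiting the monotonicity of $g_1$ and $g_2$ in $u$ to force $I_3<I_4$ for $|\alpha|$ large. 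So the difficulty you identify is real but fillable by the paper's own device, and your outline closes.
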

\begin{proof}
	\emph{Step 1:} The existence of $\alpha_{\mu}$ for sufficiently small $ \mu>0 $.\par
	By \eqref{vp-alpha}, one has
	\begin{equation}\label{mu-non-inte}
		\frac{1}{\mu}\left(\frac{R}{\gamma-1}\right)^{\delta} = -\int_{q_{1}}^{q_{0}}\frac{(f_{2}(u)+\alpha_{\mu}g_{2}(u))^{\delta}}{f_{1}(u)+\alpha_{\mu}g_{1}(u)}du.
	\end{equation}
	Define
	\begin{align*}
		J(\alpha):= - \int_{q_{1}}^{q_{0}} \frac{1}{F(u,\alpha)} du =  -\int_{q_{1}}^{q_{0}}\frac{(f_{2}(u)+\alpha g_{2}(u))^{\delta}}{f_{1}(u)+\alpha g_{1}(u)}du.
	\end{align*}
	Obviously, $ J(\alpha) $ is a continuous function for $ \alpha\in(-\infty, 0) $, and, by \eqref{mu-gamma}, it can be easily verified that
	\[ 0 \leq \inf_{\alpha < 0} J(\alpha) < +\infty.\]
	Furthermore, it is easy to see that as $\alpha \to 0^-$,
	\begin{align}\label{asym of J1}
		& J(\alpha) \to +\infty &  & \text{for any $\delta >0$},
	\end{align}
	and as $\alpha \to -\infty$,
	\begin{align}
		\label{asym of J2} & J(\alpha) \to +\infty, &  & \text{if $\delta >1$},  \\
		\label{asym of J3} & J(\alpha) \to C_0>0 ,  &  & \text{if $\delta =1$},  \\
		\label{asym of J4} & J(\alpha) \to 0,       &  & \text{if $0<\delta<1$}.
	\end{align}
	
	By the continuity of $J(\alpha)$, for any sufficiently small $\mu>0$ such that 
	$$\displaystyle \frac{1}{\mu}\left(\frac{R}{\gamma-1}\right)^{\delta}> \inf_{\alpha < 0} J(\alpha) \geq 0,$$ 
	there exists at least one $\alpha_{\mu} \in (-\infty,0 )$ such that \eqref{mu-non-inte} holds.
	That is, as $ \mu > 0 $ is sufficiently small, there exists at least one solution $\alpha_{\mu} \in (-\infty,0 )$ to the equation \eqref{vp-alpha}, which yields the existence of the solution $ (X_{\mu}(u);\ \alpha_{\mu}) $ to \eqref{mu-non-inve}.
	
	\emph{Step 2:} The uniqueness of $\alpha_{\mu}$ as $0<\delta\le 1$.\par
	It suffices to prove the monotonicity of $J(\alpha)$. Indeed,
	\begin{equation}\label{derivative of F}
		\begin{aligned}
			\frac{\partial}{\partial \alpha}\frac{1}{F(u;\alpha)}
			& = \frac{(f_{2}(u)+\alpha g_{2}(u))^{\delta-1}}{(f_{1}(u) +\alpha g_{1}(u))^2}
			\big((g_{2}(u)f_{1}(u)-g_{1}(u)f_{2}(u))                                            \\
			& \qquad +(\delta-1)g_{2}(u)(f_{1}(u)+\alpha g_{1}(u))\big)   
		\end{aligned}
	\end{equation}
	Thus, as $ 0<\delta\le 1 $, it holds that
	\begin{equation}
		\frac{\partial}{\partial \alpha}\frac{1}{F(u;\alpha)} \leq \frac{(f_{2}(u)+\alpha g_{2}(u))^{\delta-1}}{(f_{1}(u) +\alpha g_{1}(u))^2}
		(g_{2}(u)f_{1}(u)-g_{1}(u)f_{2}(u)).
	\end{equation}
	Notice that, by the Rankine-Hugoniot conditions \eqref{R-H}, it can be easily verified that $P_0 = \frac{\gamma+1}{2 \gamma} (q_0 + q_1)$
	and $ \Phi_{0} = \frac{\gamma +1}{2(\gamma-1)}q_0 q_1$. Then one has
	\begin{align*}
		g_{2}(u)f_{1}(u)-g_{1}(u)f_{2}(u)
		& = \frac{1}{2} u^2 - q_0 u + P_0 q_0 - \Phi_{0}                                                 \\
		& = \frac{1}{2} u^2 - q_0 u +\frac{\gamma+1}{2 \gamma} q_0^2
		- \frac{\gamma+1}{2\gamma(\gamma-1)} q_0 q_1                                                      \\
		& \leq \frac{1}{2} q_1^2 - q_0 q_1 +\frac{\gamma+1}{2 \gamma} q_0^2
		- \frac{\gamma+1}{2\gamma(\gamma-1)} q_0 q_1                                                      \\
		& =\frac{\gamma+1}{2\gamma} (q_0-\frac{\gamma-1}{\gamma+1}q_1)(q_0 - \frac{\gamma}{\gamma-1}q_1) \\
		& <0,
	\end{align*}
	since $q_0>q_1$ and, by \eqref{mu-gamma}, $\displaystyle q_0 < \frac{\gamma}{\gamma-1}q_1$.
	Hence the function $\displaystyle \frac{1}{F(u;\alpha)}$ is decreasing with respect to $\alpha$ and then $J(\alpha)$ is an increasing function.
	Therefore, for $0<\delta \leq 1$, the solution $\alpha_\mu$ is unique for sufficiently small $\mu>0$.
	Moreover, $\alpha_\mu \to 0$ as $\mu \to 0$ by the monotonicity of $J$ and \eqref{asym of J1}. That is, part (i) of Lemma \ref{exist-alpha_mu} holds.
	
	\emph{Step 3:} The existence of two solutions $\alpha_\mu$ as $\delta >1$. \par
	Since $\displaystyle \frac{1}{F(u;\alpha)}$ and
	$\displaystyle \frac{\partial}{\partial \alpha}\frac{1}{F(u;\alpha)}$ are continuous
	in $[q_1,q_0] \times (-\infty,0)$, the derivative of $J$ can be represented as
	\begin{align*}
		J'(\alpha)= \int_{q_0}^{q_1} \frac{\partial}{\partial \alpha}\frac{1}{F(u;\alpha)} du.
	\end{align*}
	
	By \eqref{derivative of F}, we claim that there exist two constants $-\infty<\alpha_M<\alpha_m<0$ such that 
	\begin{equation}\label{derivative-J}
		\begin{split}
			& J'(\alpha)>0, \text{ for }\alpha \in (\alpha_m, 0),\\
			& J'(\alpha)<0, \text{ for }\alpha \in (-\infty, \alpha_M).
		\end{split}
	\end{equation}
	That is, $ J(\alpha) $ is strictly increasing for $ \alpha \in (\alpha_m, 0) $ and strictly decreasing for $ \alpha \in (-\infty, \alpha_M) $. Then as $\mu>0$ is sufficiently small such that
$$\displaystyle \frac{1}{\mu}\left(\frac{R}{\gamma-1}\right)^{\delta}
> \max_{\alpha_M\leq\alpha\leq  \alpha_m} J(\alpha),$$
there exist exactly two solutions $\alpha_{\mu 1} \in (\alpha_m, 0)$ and $\alpha_{\mu 2} \in (-\infty, \alpha_M)$ to the equation \eqref{vp-alpha}, which yields the existence of two solutions $ (X_{\mu1}(u);\ \alpha_{\mu1}) $ and $ (X_{\mu2}(u);\ \alpha_{\mu2}) $ to \eqref{mu-non-inve}. 
Furthermore, by \eqref{asym of J1} \eqref{asym of J2}, it holds that
$\alpha_{\mu 1} \to 0$ and $\alpha_{\mu 2} \to -\infty$ as $\mu \to 0+$, which shows that part (ii) of Lemma \ref{exist-alpha_mu} holds.
Therefore, to complete the proof of Lemma \ref{exist-alpha_mu}, it remains to show that \eqref{derivative-J} holds.
	
	Indeed, as $\alpha \to 0-$, we can choose $\alpha<0$ with $\abs{\alpha}$ sufficiently small such that 
	$\displaystyle \alpha (\delta-1) g_1 g_2 < \frac{1}{2}g_1 f_2$. 
	Then we have 
	\begin{align*}
		J'(\alpha)>\int_{q_0}^{q_1}  
		\frac{(f_{2}(u)+\alpha g_{2}(u))^{\delta-1}}{(f_{1}(u) +\alpha g_{1}(u))^2}
		\big(\delta g_{2}(u)f_{1}(u)-\frac{1}{2}g_{1}(u)f_{2}(u)\big) du.
	\end{align*}
	Notice that $f_1(q_0)=f_1(q_1)=0$, there exists $\varepsilon>0$ such that 
	$\delta f_1(u) g_2(u) - \frac{1}{2} g_1(u) f_2(u) < \varepsilon_1 <0$ 
	for $u \in (q_1, q_1+\varepsilon) \cup (q_0-\varepsilon, q_0)$.
	Then 
	\begin{align*}
		J'(\alpha) & > -\left(\int_{q_1}^{q_1+\varepsilon}+\int_{q_0-\varepsilon}^{q_0} \right) 
		\frac{(f_{2}+\alpha g_{2})^{\delta-1}}{(f_{1} +\alpha g_{1})^2}
		\big(\delta g_{2}f_{1}-\frac{1}{2}g_{1}f_{2}\big) du \\
		& \quad -\int_{q_1+\varepsilon}^{q_0-\varepsilon} 
		\frac{(f_{2}+\alpha g_{2})^{\delta-1}}{(f_{1} +\alpha g_{1})^2}
		\big(\delta g_{2}f_{1}-\frac{1}{2}g_{1}f_{2}\big) du \\
		& \geq -\int_{q_0-\varepsilon}^{q_0} 
		\frac{f_{2}^{\delta-1}}{(f_{1} +\alpha g_{1})^2}
		\big(\delta g_{2}f_{1}-\frac{1}{2}g_{1}f_{2}\big) du \\
		& \quad -\int_{q_1+\varepsilon}^{q_0-\varepsilon} 
		\frac{(f_{2} - g_{2})^{\delta-1}}{f_{1}^2}
		\delta g_{2} f_{1}du \\
		& =: I_1 + I_2.
	\end{align*}
	By \eqref{formula of f_1}-\eqref{formula of g_2},
	\begin{align*}
		I_1 &= -\int_{q_0-\varepsilon}^{q_0} 
		\frac{u^2 f_{2}^{\delta-1}\big(\delta g_{2}f_{1}-\frac{1}{2}g_{1}f_{2}\big)}
		{((u-q_0)(u-q_1) + \alpha (\gamma u-(\gamma-1)q_0))^2} du \\
		& \geq \int_{q_0-\varepsilon}^{q_0}
		\frac{\varepsilon_0}{((q_0-q_1)(u-q_0)+ \alpha q_0)^2} du \\
		& \to +\infty  \quad \text{as $\alpha \to 0-$},
	\end{align*}
	since $-u^2 f_{2}^{\delta-1}\big(\delta g_{2}f_{1}-\frac{1}{2}g_{1}f_{2}\big)$ has a positive lower bound $\varepsilon_0$ for $u \in (q_0-\varepsilon, q_0)$. And $I_2$ is only a finite negative constant independent of $\alpha$ which implies that there exists a constant $\alpha_m < 0$ such that $J'(\alpha)>0$ for $\alpha \in (\alpha_m,0)$. 
	
	On the other hand, as $\alpha \to -\infty$, for a small constant $\varepsilon$, one can choose $\alpha<0$ with $ \abs{\alpha} $ sufficiently large such that $\alpha (\delta-1) g_1(u) g_2(u)- g_1(u) f_2(u) >0$ for $u \in (q_1, q_0-\varepsilon)$. Then 
	\begin{align*}
		J'(\alpha) &< \int_{q_1}^{q_0}
		\frac{(f_{2}+\alpha g_{2})^{\delta-1}}{(f_{1} +\alpha g_{1})^2}
		\big( g_1 f_2 - \alpha (\delta -1) g_1 g_2 \big)du \\
		&< \int_{q_0-\varepsilon}^{q_0} 
		\frac{(f_{2}+\alpha g_{2})^{\delta-1}}{(f_{1} +\alpha g_{1})^2}
		g_1 f_2 du\\
		&\quad - \int_{q_1}^{q_0-\varepsilon} 
		\frac{(f_{2}+\alpha g_{2})^{\delta-1}}{(f_{1} +\alpha g_{1})^2}
		\big(\alpha (\delta-1) g_1 g_2- g_1 f_2 \big) du \\
		&=: I_3 - I_4.
	\end{align*}
	Since $$\displaystyle \max_{u \in (q_1, q_0-2\varepsilon)} g_2(u)
	< \min_{u \in (q_0-\varepsilon, q_0)} g_2(u)
	\leq 0
	< \max_{u \in (q_1, q_0-2\varepsilon)} g_1(u) 
	< \min_{u \in (q_0-\varepsilon, q_0)} g_1(u),$$ 
	it holds that, for $\alpha<0$ with $ \abs{\alpha} $ sufficiently large, 
	\begin{align*}
		\min_{u \in (q_1, q_0-2\varepsilon)} \big(\alpha (\delta-1) g_1 g_2- g_1 f_2 \big)
		&> \max_{u \in (q_0-\varepsilon, q_0)} g_1 f_2, \\
		\min_{u \in (q_1, q_0-2\varepsilon)} 
		\frac{(f_{2}+\alpha g_{2})^{\delta-1}}{(f_{1} +\alpha g_{1})^2}
		&> \max_{u \in (q_0-\varepsilon, q_0)}
		\frac{(f_{2}+\alpha g_{2})^{\delta-1}}{(f_{1} +\alpha g_{1})^2},
	\end{align*}
	which implies $I_3 < I_4$. Therefore, there exists a constant $\alpha_M < 0$ such that
	$J'(\alpha) <0$ for $\alpha \in (-\infty, \alpha_M)$, which completes the proof for \eqref{derivative-J}.
	
\end{proof}


Notice that as $\alpha_\mu \to -\infty$, the pressure $p^\mu$ goes to infinity by \eqref{non-p-rep}. This means that in case $\delta >1$, the solution $U^\mu$ corresponding to the solution $\alpha_{\mu 2}$ in Lemma \ref{exist-alpha_mu} 
cannot converge to a solution of Euler system \eqref{aa}. For the cases of $\alpha_\mu \to 0$, we can follow the similar arguments in Section \ref{Section 3.2} and Section \ref{Section 4.1} to obtain the convergence of the corresponding solution to a shock solution to the Euler system \eqref{aa}. Therefore we omit the proof here and give the conclusion directly. 

\begin{lem}\label{last}
	Suppose that (\ref{mu-gamma}) holds. 
\begin{enumerate}
	\item[(i)] 	If $0 < \delta \leq 1$, there exists a small constant $\mu_1>0$ such that for any $\mu \in (0,\mu_1)$, the viscous solution $\{u^{\mu}\}$ to the problems \eqref{tem-non-first-eq}\eqref{nonsen-tem-first-bd} exists and it converges to  $u_{*}^{0}$ in $L^{1}(0,1)$ as $\mu \to 0$, where
	\begin{equation}
		u_{*}^{0}(x):=
		\left\{
		\begin{aligned}
			q_{0}, & \quad 0\leq x < X_{s}   \\
			q_{1}, & \quad  X_{s} < x \leq 1
		\end{aligned}
		\right.
	\end{equation}
	with 
	\begin{equation} 
		X_{s}:= \left( 1+ \left(\frac{\frac{\gamma+1}{\gamma-1} - \frac{q_1}{q_0}}{\frac{\gamma+1}{\gamma-1} - \frac{q_0}{q_1}}\right)^{\delta}
		\frac{q_1}{q_0} \right)^{-1} =\left( 1+\left(\frac{M_0}{M_1}\right)^{2\delta}
		\left(\frac{1-M_0^{-2}}{M_1^{-2}-1} \right)^{1+2\delta}
		\right)^{-1}.
	\end{equation}
	\item[(ii)] If $\delta > 1$, there exists a constant $\mu_2>0$ such that for any $\mu \in (0,\mu_2)$, there are two solutions $\{u^{\mu}_1\}$ and $\{u^{\mu}_2\}$ to the problems \eqref{tem-non-first-eq}\eqref{nonsen-tem-first-bd}, satisfying that, one of the solutions $\{u^{\mu}_1\}$ converges to $u_*^0$ in $L^{1}(0,1)$ as $\mu \to 0$ and the other solution $\{u^{\mu}_2\}$ cannot converge to a shock solution.

\end{enumerate}

\end{lem}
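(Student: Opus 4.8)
The plan is to extract everything possible from Lemma \ref{exist-alpha_mu} and then split the analysis according to the limiting value of $\alpha_\mu$. That lemma already furnishes, for all sufficiently small $\mu$, the solution(s) $(u^\mu;\alpha_\mu)$ to \eqref{tem-non-first-eq}--\eqref{nonsen-tem-first-bd} through the equivalent inverse-function problem \eqref{mu-non-inve}, together with the asymptotics $\alpha_\mu\to 0$ when $0<\delta\le 1$, and $\alpha_{\mu 1}\to 0$, $\alpha_{\mu 2}\to-\infty$ when $\delta>1$. Thus only two things remain: that the branch with $\alpha_\mu\to-\infty$ fails to converge to a shock solution, and that every branch with $\alpha_\mu\to 0$ converges in $L^1(0,1)$ to the explicit profile $u_*^0$. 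The first is immediate from the representation \eqref{non-p-rep}: at every interior point where $u^\mu<q_0$ one has $1-q_0/u^\mu<0$, so the term $(\gamma-1)\alpha_\mu(1-q_0/u^\mu)$ drives $p^\mu\to+\infty$ as $\alpha_{\mu 2}\to-\infty$; a family whose pressure blows up on a set of positive measure cannot converge, even weakly, to any of the bounded profiles \eqref{shock-solution}. This disposes of the second solution in case (ii).

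For the branch with $\alpha_\mu\to 0$ I would run the three-step scheme of Section \ref{Section 3.2}. The \emph{a priori} bound $q_1\le u^\mu\le q_0$ and the monotonicity $\partial_x u^\mu<0$, both already obtained in the reformulation via Lemma \ref{lemma2} and the Hopf lemma, give $T.V.\,u^\mu=q_0-q_1$, so Helly's theorem produces a subsequence converging a.e.\ to a function $u^0$ of bounded variation. Multiplying \eqref{tem-non-first-eq} by a test function $\phi\in C^\infty[0,1]$ and integrating by parts, the left-hand side is $O(\mu)$ because $u^\mu$ is uniformly bounded, so $u^0$ satisfies $F(u^0,0)=0$ in the weak sense \eqref{non-weak-sol-vp}; since $F(\cdot,0)$ vanishes on $[q_1,q_0]$ only at the two endpoints, $u^0$ takes values in $\{q_0,q_1\}$ almost everywhere and hence has the form \eqref{shock-solution_vp}.

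The decisive step is to locate the single jump by computing $\lim_{\mu\to 0+}I_\mu(u)$, with $I_\mu(u):=X_\mu(u)/(1-X_\mu(u))$ and $X_\mu$ from \eqref{representation-vp}. Writing the integrand as $(f_2+\alpha g_2)^\delta/(-(f_1+\alpha g_1))$ and recalling that $f_1<0$ on $(q_1,q_0)$ with simple zeros at the endpoints, while $f_2>0$ and $g_1\neq 0$ there (the latter by \eqref{mu-gamma}), both $X_\mu(u)$ and $1-X_\mu(u)$ diverge only logarithmically in $|\alpha_\mu|$, the divergence coming entirely from neighbourhoods of $q_0$ and $q_1$. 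Sandwiching $f_1$ between its tangent lines near each endpoint, exactly as in the proof of Lemma \ref{I}, extracts the leading terms, namely a factor $(f_2(q_0))^\delta/f_1'(q_0)$ times $\ln(1/|\alpha_\mu|)$ near $q_0$ and $(f_2(q_1))^\delta/|f_1'(q_1)|$ times $\ln(1/|\alpha_\mu|)$ near $q_1$. The logarithms cancel in the ratio, so letting first $\mu\to 0$ and then the cutoff $\sigma\to 0$ yields the finite limit $\lim I_\mu=\frac{q_0}{q_1}\left(\frac{f_2(q_0)}{f_2(q_1)}\right)^{\delta}$, where one uses $|f_1'(q_1)|/f_1'(q_0)=q_0/q_1$. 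Substituting the Rankine--Hugoniot identities $P_0=\frac{\gamma+1}{2\gamma}(q_0+q_1)$ and $\Phi_0=\frac{\gamma+1}{2(\gamma-1)}q_0q_1$ reduces $f_2(q_0)/f_2(q_1)$ to $(\frac{\gamma+1}{\gamma-1}-\frac{q_0}{q_1})/(\frac{\gamma+1}{\gamma-1}-\frac{q_1}{q_0})$, and $X_s=\lim I_\mu/(1+\lim I_\mu)$ becomes precisely the asserted expression. As this value is independent of the subsequence, $X_\mu\to X_s$ for every $u$, hence $X_\mu\to X_s$ in $L^1(q_1,q_0)$ by dominated convergence, and the standard inverse-function correspondence for monotone functions transfers this to $u^\mu\to u_*^0$ in $L^1(0,1)$, settling (i) and the convergent branch of (ii).

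I expect the real difficulty to sit in this computation of $\lim I_\mu$. Compared with the heat-conduction case of Lemma \ref{I}, the integrand now carries the extra factor $(f_2+\alpha g_2)^\delta$, so one must check that $f_2$ is bounded away from $0$ on $[q_1,q_0]$ (here $f_2(q_0)=q_0p_0/(\gamma-1)$ and $f_2(q_1)=q_1p_1/(\gamma-1)$ are both positive), that this factor contributes only the constants $(f_2(q_0))^\delta$ and $(f_2(q_1))^\delta$ to the leading asymptotics, and that all the $\delta$-dependence of $X_s$ is carried exactly by these constants. Keeping the endpoint error terms under uniform control as $\alpha_\mu\to 0$ and $\sigma\to 0$, and verifying that the resulting algebra collapses to the compact closed form for $X_s$, is where the work lies; the remainder is a direct transcription of Sections \ref{Section 3.2} and \ref{Section 4.1}.
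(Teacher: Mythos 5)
Your proposal is correct and takes essentially the same route as the paper: the paper itself rules out the $\alpha_{\mu 2}\to-\infty$ branch by the pressure blow-up visible in \eqref{non-p-rep} and otherwise omits the proof, explicitly deferring to the arguments of Sections \ref{Section 3.2} and \ref{Section 4.1} — precisely the three-step Helly/weak-limit/$I_\mu$ scheme you carry out. Your filled-in endpoint computation is also right: since $f_2(q_0)=q_0p_0/(\gamma-1)$, $f_2(q_1)=q_1p_1/(\gamma-1)$ and $|f_1'(q_1)|/f_1'(q_0)=q_0/q_1$, your limit $\lim_{\mu\to0}I_\mu=\frac{q_0}{q_1}\bigl(f_2(q_0)/f_2(q_1)\bigr)^{\delta}$ reproduces exactly the paper's value of $X_s$.
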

Obviously, Theorem \ref{thm: non-viscous} is an immediate consequence of Lemma \ref{last}.

\appendix

\section{Two Lemmas}

In this appendix, we give the following two lemmas which are employed in this paper. 

\begin{lem}\label{lemma1}
	Suppose $F \in C(\mathbb{R})$, $u\in C^1[0,1]$ satisfies 
	$$\partial_x u=F(u)$$ 
	with $u(0)>u(1)$.
	Then $u$ is a non-increasing function.
\end{lem}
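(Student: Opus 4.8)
The plan is to argue by contradiction, reducing the claim to the pointwise statement that $F(u(x))\le 0$ for every $x\in[0,1]$; since $\partial_x u=F(u)$, this is exactly the assertion that $u$ is non-increasing. The essential feature I would exploit is that the equation is autonomous, so that the sign of $u'(x)$ depends only on the value $u(x)$: at any point where $u$ takes a value $c$ with $F(c)>0$, the derivative there equals $F(c)>0$, so $u$ must cross the level $c$ strictly upward. This rigidity is what lets me avoid any appeal to uniqueness of solutions, which is unavailable here because $F$ is only assumed continuous.

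So suppose, for contradiction, that there is a point $x_0\in[0,1]$ with $F(u(x_0))>0$, and set $c:=u(x_0)$. First I would show that $x_0$ is the \emph{only} point of $[0,1]$ at which $u$ equals $c$. Indeed, if there were a second such point, then just to the right of $x_0$ one has $u>c$, and since $u$ returns to the value $c$, at the first such return $x_*$ the function reaches $c$ from above, forcing $u'(x_*)\le 0$; but the value at $x_*$ is again $c$, so $u'(x_*)=F(c)>0$, a contradiction. With uniqueness of the crossing in hand, the intermediate value theorem applied to the continuous function $u$ yields the clean separation
\[
u(x)<c \ \text{ for } x\in[0,x_0), \qquad u(x)>c \ \text{ for } x\in(x_0,1],
\]
since any sign change of $u-c$ away from $x_0$ would produce a second root of $u-c$.

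Finally I would read off the contradiction from the boundary data. In the interior case $x_0\in(0,1)$ the separation gives $u(0)<c<u(1)$, hence $u(0)<u(1)$; the endpoint cases $x_0=0$ and $x_0=1$ likewise give $u(0)=c<u(1)$ and $u(0)<c=u(1)$ respectively. In every case $u(0)<u(1)$, contradicting the hypothesis $u(0)>u(1)$. Therefore no such $x_0$ exists, $F(u(x))\le 0$ throughout $[0,1]$, and $u$ is non-increasing. The one step requiring genuine care — and which I regard as the crux — is the uniqueness of the level crossing: this is where the continuity of $F$ together with the autonomous structure does all the work, and where one must argue directly with the sign of the derivative rather than invoking any ODE uniqueness theorem.
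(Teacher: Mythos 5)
Your proof is correct, and its engine is exactly the paper's: at the first return of $u$ to a level $c$ with $F(c)>0$, the function arrives from above, so the one-sided (hence, by differentiability, the full) derivative there is $\leq 0$, contradicting $\partial_x u = F(c)>0$; this is precisely the paper's step with $x^{*}:=\inf S_{u(x_0)}$. Where you differ is the global bookkeeping. The paper first proves a two-point claim (if $x_0<x_1$ and $u(x_0)>u(x_1)$, then $\partial_x u\leq 0$ at both points), and then needs an additional confinement step -- showing $\partial_x u(x)=0$ wherever $u(x)>u(0)$ or $u(x)<u(1)$, hence $u(1)\leq u(x)\leq u(0)$ on $[0,1]$ -- before combining the claims to conclude $\partial_x u\leq 0$ everywhere. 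You instead extract a single global contradiction: uniqueness of the crossing of the level $c=u(x_0)$ gives the separation $u<c$ on $[0,x_0)$ and $u>c$ on $(x_0,1]$, forcing $u(0)<u(1)$ in every endpoint configuration. This is more economical, since it bypasses the confinement step entirely (a step the paper states rather quickly). One small repair is needed: as written, your uniqueness argument only excludes a second root of $u=c$ lying to the \emph{right} of $x_0$ (``$u$ returns to the value $c$''). A root to the left is excluded by the identical argument with the roles relabeled: if $a<b$ are two roots, start from the leftmost one $a$, where $u'(a)=F(c)>0$ forces $u>c$ immediately to the right of $a$, and the first return to the level $c$ in $(a,b]$ yields the same sign contradiction. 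With that one sentence added, your separation $u<c$ on $[0,x_0)$, $u>c$ on $(x_0,1]$ is fully justified and the proof is complete.
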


\begin{proof}
	It suffices to prove that $\partial_x u(x) \leq 0$ for any $x \in [0,1]$.
	
	We claim that if $x_0,x_1 \in [0,1], x_0<x_1 $ with $u(x_0)>u(x_1)$, then we have $\partial_x u(x_0) \leq 0$ and $\partial_x u(x_1) \leq 0$.
	
	If not, assume $\partial_x u(x_0)=F(u(x_0))>0$.
	By the continuity of $u$ and $F$, it holds that $\partial_x u>0$ in a small neighborhood $(x_0, x_0+\delta)\subset (x_0,x_1)$ and then $u(x)>u(x_0)$ for any $x \in (x_0, x_0 +\delta)$.
	
	Define the level set
	$S_{z}:=\{x \in (x_0,x_1):u(x)=z \}.$
	Because $u(y) > u(x_0)$ for some $y \in (x_0, x_0+\delta)$ and $u(x_1) < u(x_0)$, there exists $\xi \in (y,x_1)$ such that $u(\xi)=u(x_0)$ so that $S_{u(x_0)}$ is not empty.
	
	Denote $x^*:=\inf S_{u(x_0)} \geq x_0+\delta$, then $u(x^*)=u(x_0)$ since $u$ is continuous. Thus $u(x)>u(x_0)$ for any $x\in(x_0,x^*)$ which implies 
	$$\partial_x u(x^*)=\lim_{x \rightarrow x^*-} \frac{u(x)-u(x^*)}{x-x^*} \leq 0.$$
	However $\partial_x u(x^*)=F(u(x^*))=F(u(x_0))>0$ which leads to a contradiction.
	This contradiction indicates that $\partial_x u(x_0) \leq 0$. 
	
	$\partial_x u(x_1)\leq 0$ can be proved similarly.	
	Moreover, if $u(x_0)<u(x_1)$, it can be similarly proved that $\partial_x u(x_0) \geq 0$ and $\partial_x u(x_1)\geq 0$.
	
	Finally, for any $x\in(0,1)$ satisfies $u(x)>u(0)$, it holds that $\partial_x u(x)\geq 0$ if we consider the interval $[0,x]$ and $\partial_x u(x) \leq 0$ if we consider the interval $[x,1]$.
	Hence $\partial_x u(x) = 0$ for any $x$ satisfies $u(x)>u(0)$. It can be similarly proved that $\partial_x u(x) = 0$ for any $x$ satisfies $u(x)<u(1)$. This implies that $u(1) \leq u(x) \leq u(0)$ and by the claim $\partial_x u(x) \leq 0$ for any $x \in [0,1]$.
\end{proof}

\begin{lem}\label{lemma2}
	Suppose $F \in C(\mathbb{R})$, $A \in C(\mathbb{R})$, $u\in C^1[0,1]$ satisfies 
	$$A(u)\partial_x u=F(u)$$ 
	with $u(0)>u(1)$.
	Assume the number of zeros of $A$ is finite and $A(y) \neq 0$ if $y \in [u(1),u(0)]$, then $u$ is a non-increasing function.
	
\end{lem}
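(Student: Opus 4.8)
The plan is to reduce Lemma \ref{lemma2} to the already-established Lemma \ref{lemma1} by dividing the equation by $A(u)$. First I would record that, since $A$ is continuous and does not vanish on the compact interval $[u(1),u(0)]$, it has a constant sign there; because $A$ has only finitely many zeros, one can enlarge $[u(1),u(0)]$ to an open interval $(a,b)$ (with $a,b$ either $\pm\infty$ or consecutive zeros of $A$) on which $A$ is continuous and nowhere zero. On $(a,b)$ the function $G:=F/A$ is continuous, and I would extend it to some $\tilde G\in C(\mathbb{R})$. The key observation is that at every $x$ with $u(x)\in(a,b)$ one has $A(u(x))\ne 0$, so the equation rearranges to $\partial_x u=F(u)/A(u)=\tilde G(u)$. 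Hence, once we know that $u$ takes values in $[u(1),u(0)]$, the solution $u$ satisfies the autonomous equation $\partial_x u=\tilde G(u)$ on all of $[0,1]$ with $u(0)>u(1)$, and Lemma \ref{lemma1} immediately yields that $u$ is non-increasing. This last step needs nothing about the sign of $A$, only that $A$ is nonzero so that $\tilde G$ is continuous.

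The hard part will be the a priori bound $u(1)\le u(x)\le u(0)$, i.e. ruling out that $u$ escapes above $u(0)$ or below $u(1)$; I will argue the upper bound, the lower one being symmetric. Suppose $\{x:u(x)>u(0)\}$ were nonempty and let $(c,d)$ be one of its connected components, so that $u(c)=u(d)=u(0)$ while $u>u(0)$ on $(c,d)$ with an interior maximum $M>u(0)$. The tool is a two-crossing argument in the spirit of the proof of Lemma \ref{lemma1}: for any level $\eta\in(u(0),\min\{M,b\})$ the value $\eta$ is attained at some up-crossing point (where $\partial_x u\ge 0$) and at some later down-crossing point (where $\partial_x u\le 0$); since $A(\eta)\ne 0$, at both points $\partial_x u=G(\eta)$, which forces $G(\eta)=0$. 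Thus $G\equiv 0$, hence $F\equiv 0$, on the corresponding interval, so $\partial_x u=0$ wherever $u$ lies in it and $u$ is forced to be locally constant, contradicting that it rose from $u(0)$.

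It remains to exclude the possibility that $u$ climbs up to a zero $b$ of $A$ lying just above $u(0)$. Here I would use that $u\in C^1$: at any point where $u$ equals such a $b$ the identity $A(b)\,\partial_x u=F(b)$ degenerates to $0=F(b)$, so necessarily $F(b)=0$. Combining this with the confinement argument of the previous paragraph applied on $(u(0),b)$ shows that $u$ cannot actually reach $b$, since it would have to be constant on an initial subinterval of $(c,d)$ while simultaneously approaching the value $b\ne u(0)$; the same reasoning bounds $u$ away from the first zero of $A$ below $u(1)$. This confinement across zeros of $A$ is precisely where the finiteness-of-zeros hypothesis and the $C^1$ regularity are essential, and it is the main obstacle I expect; once the bound $u\in[u(1),u(0)]$ is secured, the conclusion is a direct appeal to Lemma \ref{lemma1} as described above.
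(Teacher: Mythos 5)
Your proof is correct and takes essentially the same route as the paper: both reduce the lemma to the a priori confinement $u(1)\le u(x)\le u(0)$ followed by an application of Lemma \ref{lemma1} to $\partial_x u = F(u)/A(u)$ (extended continuously off the range of $u$), and both prove the confinement by the identical two-crossing argument at levels in a zero-free band of $A$ next to the offending extremum (you work at a maximum above $u(0)$, the paper at a minimum below $u(1)$), forcing $F$ to vanish there and $u$ to be locally constant, a contradiction. Your third paragraph, excluding that $u$ climbs to a zero $b$ of $A$, is redundant since the contradiction already occurs at levels immediately above $u(0)$, before $u$ could approach $b$; otherwise the write-up matches the paper's proof step for step.
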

\begin{proof}
	According to the proof of Lemma \ref{lemma1}, it suffices to prove that $u(1) \leq u(x) \leq u(0)$ for $x \in [0,1]$.
	
	If not, without loss of generality, assume that $ x_0\in(0,1) $ and $c_0=u(x_0)<u(1)$ is the minimum value of $ u $ in $[0,1]$.
	
	Since $A(x)$ only has finite zeros, one has that $A(y) \neq 0$ for $c_0 < y < c_0+\delta < u(1)$, with sufficiently small $\delta>0$. 
	For each $y \in (c_0,c_0 +\delta)$, let 
	$$S_{y}:=\{x \in [0,1]:\ u(x)=y \}.$$
	Define 
	\[\begin{split}
		x_1(y) &:= \sup_{0\leq x < x_0} S_{y},\\
		x_2(y) &:= \inf_{x_0<x\leq 1} S_{y}.
	\end{split}
	\]
	Obviously, $ x_1(y) < x_0 $, $ x_2(y) > x_0 $ and $ u(x_1(y)) = u(x_2(y)) = y $.
	Moreover, for any $ x\in (x_1(y),\ x_2(y))\setminus\set{x_0} $, it holds that $u(x) < y$ and $A(u(x)) \neq 0$.
	Hence, 
	$$0 \geq \partial_x u(x_1) \partial_x u(x_2) = \frac{F^2(y)}{A^2(y)} \geq 0.$$
	Therefore $\partial_x u(x)=0$ for any $x$ satisfies $u(x) \in (c_0,c_0+\delta)$.
	It contradicts with the assumption that the value of $u$ reach the minimum $c_0$ at $ x_0 $.
	
	Therefore, it holds that $u(1) \leq u(x) \leq u(0)$ and $u$ is a non-increasing function by Lemma \ref{lemma1}.
\end{proof}

\begin{rem}
	It can be easily seen from the proof of Lemma \ref{lemma2} that, in case $u(0)<u(1)$, then under the assumptions of Lemma \ref{lemma2} and $A(y) \neq 0$ for $y \in [u(0),u(1)]$, it holds that $u$ is a non-decreasing function.
\end{rem}

\section*{Acknowlegements}
The research of the paper was supported in part by the National Key R\&D Program of China (No. 2020YFA0712000), National Natural Science Foundation of China (No. 11971308), and the Fundamental Research Funds for the Central Universities.

\bibliographystyle{IEEEtran}
\end{document}